\theoremstyle{plain}
\newtheorem{theorem}{Theorem}[section]
\newtheorem{prop}[theorem]{Proposition}
\newtheorem{cor}[theorem]{Corollary}
\newtheorem{lemma}[theorem]{Lemma}
\theoremstyle{definition}
\newtheorem{defn}[theorem]{Definition}
\theoremstyle{remark}
\newtheorem{remark}{Remark}
\newcommand{\p}{{p}}
\newcommand{\PSp}{\mathrm{PSp}}
\newcommand{\Spin}{\mathrm{Spin}}
\newcommand{\PSO}{\mathrm{PSO}}
\newcommand{\SO}{\mathrm{SO}}
\renewcommand{\P}{\mathbb{P}}
\newcommand{\PD}{\mathrm{PD}}
\newcommand{\Hom}{\mathrm{Hom}}
\newcommand{\C}{\mathbb{C}}
\newcommand{\Z}{\mathbb{Z}}
\newcommand{\Q}{Q}
\newcommand{\R}{\mathbb{R}}
\newcommand{\rmd}{\mathrm{d}}
\newcommand{\F}{\mathcal W_{q,\mathrm{Lie}}}
\newcommand{\G}{\mathcal W_{q,\mathrm{Giv}}}
\newcommand{\LL}{\mathcal W_{q,\mathrm{Prz}}}
\newcommand{\W}{\mathcal W_q}
\newcommand{\HH}{\mathcal W_{q,\mathrm{Lus}}}
\newcommand{\omegaGiv}{\omega_{q,Giv}}
\newcommand{\omegaCan}{\omega_{can}}
\newcommand{\inv}{^{-1}}
\newcommand{\RR}{\check{X}_\mathrm{Lie}}
\newcommand{\X}{\check{\mathbb{X}}}
\newcommand{\TT}{\check{X}_\mathrm{Prz}}
\newcommand{\XXcan}{\check{X}_\mathrm{can}}
\newcommand{\XX}{\check{X}}
\newcommand{\XXlus}{\check{X}_\mathrm{Lus}}
\newcommand{\VV}{\check{X}_{q,\mathrm{Giv}}}
\def\Ddots{\mathinner{\mkern1mu\raise\p@
\vbox{\kern7\p@\hbox{.}}\mkern2mu
\raise4\p@\hbox{.}\mkern2mu\raise7\p@\hbox{.}\mkern1mu}}
\title[On Landau-Ginzburg models for quadrics]{On Landau-Ginzburg models for quadrics and flat sections
of Dubrovin connections}
\author{C. Pech, K. Rietsch, and L. Williams}
\begin{document}

\begin{abstract}
This paper proves a version of mirror symmetry expressing the (small) Dubrovin connection for even-dimensional quadrics in terms of a mirror-dual Landau-Ginzburg model $(\XXcan,\W)$. Here $\XXcan$ is the complement of an anticanonical divisor in a Langlands dual quadric. The superpotential $\W$ is a regular function on $\XXcan$ and is written in terms of coordinates which are naturally identified with a cohomology basis of the original quadric.
This superpotential is shown to extend the earlier Landau-Ginzburg model of Givental, and  to be isomorphic to the Lie-theoretic mirror introduced in \cite{rietsch}. We also introduce a Laurent polynomial superpotential which is the restriction of $\W$ to a particular torus in $\XXcan$.  
Together with results from~\cite{PR2} for odd quadrics, we obtain a combinatorial model for the Laurent polynomial superpotential in terms of a quiver, in the vein of those introduced in the 1990's by Givental for type $A$ full flag varieties. These Laurent polynomial superpotentials form a single series, despite the fact that our mirrors of even quadrics are defined on dual quadrics, while the mirror to an odd quadric is naturally defined on a projective space. Finally, we express flat sections of the (dual) Dubrovin connection in a natural way in terms of oscillating integrals associated to $(\XXcan,\W)$ and compute explicitly a particular flat section.
\end{abstract}

\maketitle 
\setcounter{tocdepth}{1}
\tableofcontents

\section{Introduction}

Suppose $X$ is a smooth projective complex Fano variety of dimension $N$. Starting from $X$ as the `$A$-model', Dubrovin constructed a flat connection on a trivial bundle with fiber $H^*(X,\C)$, using  Gromov-Witten invariants of $X$, see Section~\ref{sec:connections}. The `$B$-models' of Fano varieties were first introduced in \cite{WittenPhases} and \cite{GiventalICM}. In our setting  $X$ will always have Picard rank $1$. In this case the base of the trivial bundle on the $A$-side can be taken to be the two-dimensional complex torus $\C^*_q\times\C^*_{\hbar}$ with coordinates $q$ and $\hbar$. The Dubrovin connection is flat and therefore defines a $D$-module $M_A$, where $D=\C[\hbar^{\pm 1},q^{\pm 1}]\langle\partial_\hbar,\partial_q\rangle $.

In \cite{Givental:EqGW}, Givental computed the `small $J$-function' and the `quantum differential equation' of projective hypersurfaces, such as quadrics (see Section~\ref{s:aseries}). He also proved the first mirror theorem in this setting, which states that the coefficients of the $J$-function (and hence the solutions to the quantum differential equation) can be expressed as oscillating integrals.
When the cohomology of the hypersurface is generated in degree $2$, e.g. for odd-dimensional quadrics, then the coefficients of the $J$-function generate the $A$-model $D$-module $M_A$. For even-dimensional quadrics this is no longer the case.

In this paper, we exploit the fact that quadrics are homogeneous spaces for the special orthogonal group and thus also have mirror LG models defined using Lie theory \cite{rietsch}. We express these Lie theoretic mirrors in certain canonical coordinates and show how to reconstruct in a natural way the entire $D$-module $M_A$ on the mirror side from a Gauss-Manin system $M_B$. In particular, we obtain formulas for  flat sections of the Dubrovin connection where the coefficients are oscillating integrals. We also investigate the comparison between various choices of mirrors for quadrics including particularly  Givental's mirror and our canonical LG model. 

We begin describing our results by giving an overview of various LG models for quadrics, including the new ones introduced in this paper. We are then able to state our comparison results followed by our versions of the mirror theorem and some applications.

\subsection*{Acknowledgements}
The authors thank Sasha Givental for useful comments and suggestions, 
leading to major improvements, particularly in the exposition. We thank Bernard Leclerc for pointing us to the references \cite{GLS-Partial}  and \cite{GLS-Survey}. The first two authors also thank Yank\i \ Lekili for helpful conversations. The middle author thanks Dale Peterson.  

\subsection{Overview of LG models for quadrics}

\subsubsection*{Givental's mirror.}

Givental's mirror to the quadric $X=Q_{N}$ is defined by a smooth affine variety (the Givental mirror manifold)
\begin{equation}\label{e:giventalMirror}
	\VV=\left\{(\nu_1,\dotsc, \nu_{N+2})\in (\C^*)^{N+2}\ \mid\  \prod_{i=1}^{N+2}\nu_i=q,\ \ \nu_{N+1}+\nu_{N+2}=1\right\}
\end{equation}
with superpotential  
\begin{equation}\label{e:giventalLG}
	\G(\nu_1,\dotsc, \nu_{N+2})=\nu_1+\dotsc +\nu_{N},
\end{equation}
and volume form 
\begin{equation}\label{e:giventalVF}
	\omegaGiv=\frac{\bigwedge_{i=1}^{N+2}{d\log \nu_i}}{d(\nu_{N+1}+\nu_{N+2})\wedge d\log (\prod_{i=1}^{N+2}\nu_i)}.
\end{equation}
Note that $\VV$ is a hypersurface in an $(N+1)$-dimensional torus and $\omegaGiv$ is the residue of the standard holomorphic volume form on the torus (compare e.g. \cite{Pham:book}). Givental's mirror theorem expresses the coefficients of the $J$-function of $Q_N$ as oscillating integrals involving $\G$ and $\omegaGiv$ over some middle-dimensional cycles in $\VV$.

\subsubsection*{A Laurent polynomial mirror.}

A Laurent polynomial LG model $(\TT, \LL)$ for the $N$-dimensional quadric $X=\Q_N$, 
\begin{equation}\label{e:toricLG}
	\TT=(\C^*)^N, \quad\LL = z_1+z_2+\dotsc + z_{N-1}+ \frac{(z_N+q)^2}{z_1 z_2 \cdots z_N},
\end{equation}
can be obtained from Givental's mirror by a change of variables which is essentially the one found in  \cite[Remark~19]{przyjalkowski}, see also \cite{GS}.  
We recall the change of variables in Sections \ref{comparison:odd} and \ref{comparison:even}. This LG model  is a partial compactification of Givental's mirror. The torus-invariant volume form on $\TT$ restricts to Givental's volume form~\eqref{e:giventalVF}. 

\subsubsection*{A Lie-theoretic mirror.}

The smooth quadric $Q_N$ inside $\P^{N+1}$ is naturally a homogeneous space for the group $\Spin_{N+2}(\C)$ associated to the defining quadratic form. The mirror construction from \cite{rietsch} applies in this setting. It gives a regular function $\F$ on an $N$-dimensional affine subvariety $\RR$  inside the full flag variety for the Langlands dual group, namely the full flag variety for $\PSp_{N+1}(\C)$ if $N$ is odd, and for $\PSO_{N+2}(\C)$ otherwise. The precise definition of $(\RR,\F)$ is recalled in Section~\ref{s:richardson}.

The affine variety $\RR$ also has  a holomorphic volume form $\omegaCan$, which is explicitly described in \cite{rietsch}. Indeed $\RR$ is an affine Richardson variety and it is also log Calabi-Yau as seen by combining \cite[Appendix A]{KLS} and \cite[Section~4.2]{KumarSchwede}. 

By the main result of \cite{rietsch} there is an isomorphism between the Jacobi ring of $\F$ and the quantum cohomology ring of $\Q_N$ (with the quantum parameter inverted). This is not true for the mirrors $(\VV,\G)$ and $(\TT,\LL)$.

\subsubsection*{A canonical mirror.}

The canonical mirror of an odd-dimensional quadric $\Q_{2m-1}$ was introduced in \cite{PR2}, and is defined on the complement $\XXcan$ of an anticanonical divisor in the projective space $\X=\mathbb P(H^*(\Q_{2m-1},\C)^*)$. Suppose $\p_0,\dotsc,\p_{2m-1}$ are the homogeneous coordinates on $\X$ corresponding to the Schubert basis of $H^*(\Q_{2m-1},\C)$. Then $\W:\XXcan\to\C$ is given by
	\begin{equation*}
		\W = \frac{\p_{1}}{\p_0} + \sum_{\ell=1}^{m-1} \frac{\p_{\ell+1} \p_{2m-1-\ell}}{\delta_\ell} + q\frac{\p_{1}}{\p_{2m-1}},
	\end{equation*} 
where	
	\begin{equation}\label{eq:delta}
	\delta_{\ell}  =  \sum_{k=0}^{\ell} (-1)^k \p_{\ell-k} \p_{N-\ell+k} \text{ for }1 \leq \ell \leq m-1
\end{equation}
with $N=2m-1$.
	
The canonical mirror of an even-dimensional quadric $\Q_{2m-2}$ introduced here is similar in appearance, however the mirror projective space is replaced by a `mirror quadric' $\X=\check{Q}_{2m-2}$. Note first that $\P(H^*(\Q_{2m-2},\C)^*)$ has dimension $2m-1$ and homogeneous coordinates $\p_0,\dotsc, \p_{m-1},\p_{m-1}',\dotsc \p_{2m-2}$ corresponding to the Schubert basis of $H^*(\Q_{2m-2},\C)$. The mirror quadric $\check \Q_{2m-2}$ is the quadratic hypersurface inside $\P(H^*(\Q_{2m-2},\C)^*)$ defined by 
\begin{equation*}
	\p_{m-1}\p_{m-1}'-\p_m \p_{m-2} + \dots + (-1)^{m-1} \p_{2m-2} \p_{0}=0.
\end{equation*}
The superpotential $\W $ is defined by the formula
	\begin{equation*}
		\W = \frac{\p_{1}}{\p_0} + \sum_{\ell=1}^{m-3} \frac{\p_{\ell+1} \p_{2m-2-\ell}}{\delta_\ell} + \frac{\p_m}{\p_{m-1}} + \frac{\p_m}{\p_{m-1}'} + q\frac{\p_{1}}{\p_{2m-2}},
	\end{equation*}
which is regular on the the complement $\XXcan$ of an anticanonical divisor in $\check \Q_{2m-2}$.
Here $\delta_{\ell}$ is defined by the formula in equation~\eqref{eq:delta}, with $N=2m-2$.

\subsubsection*{Laurent polynomial mirrors with a quiver description}\label{s:LaurentLikePn}

For $X=Q_{2m-1}$  the Laurent polynomial mirror
\begin{equation}\label{e:quiverMirrorOdd}
   		\HH = a_1 + \dots + a_{m-1} + c +  b_{m-1} + \dots + b_1 + q \frac{a_1+b_1}{a_1 \dots a_{m-1} c  b_{m-1} \dots b_1}. 
\end{equation} 
was introduced in \cite[Proposition 8]{PR2}. It was obtained by restricting $\F$ to a natural choice of torus $\XXlus$ in $\RR$, on which we consider coordinates like the ones used by Lusztig in \cite{Lusztig94}.

For the even quadric $X=\Q_{2m-2}$ we define here an analogous Laurent polynomial mirror
\begin{equation}\label{e:quiverMirrorEven}
     \HH=a_1 + \dots + a_{m-2} + c + d + b_{m-2} + \dots + b_1 + q \frac{a_1+b_1}{a_1 \dots a_{m-2} c d b_{m-2} \dots b_1},
\end{equation}
also obtained from a torus $\XXlus$ in $\RR$. Note that $(\XXlus,\HH)$ is not isomorphic to the other Laurent polynomial mirror $(\TT,\LL)$.

In Section \ref{sec:quiver}, we interpret $(\XXlus,\HH)$ in terms of a quiver, in the spirit of \cite{Givental:QToda,BCFKvSGr,BCFKvSPF}. The quiver we associate to $\Q_N$ looks like an augmentation of a type $D_N$ quiver (see Figure \ref{fig:quiverQN}). Note that the mirrors for type~$A$ homogeneous spaces from \cite{Givental:QToda,EHX,BCFKvSGr,BCFKvSPF} also relate to Lusztig coordinates, see \cite{rietschNagoya, rietsch}.

\subsection{Comparison of the canonical LG model with the other mirrors}

\subsubsection*{Isomorphism with the Lie-theoretic mirror}

It was proved in \cite{PR2} that for the odd-dimensional quadric $\Q_{2m-1}$ viewed as a homogeneous space for $\Spin_{2m+1}$, there is an isomorphism between the domain $\RR$ of $\F$ and the domain $\XXcan$ of the canonical mirror. This isomorphism identifies the superpotentials $\F$ and $\W$. 

\begin{theorem}[{\cite[Theorem 1]{PR2}}]
	If $X=\Q_{2m-1}$ is an odd-dimensional quadric, there is an isomorphism of affine varieties $\RR \to \XXcan$ such that the following diagram commutes
	\[
		\begin{tikzpicture}
			\matrix [matrix of math nodes,row sep=1.5cm, column sep=0.6cm,text height=1.5ex,text depth=0.25ex]
  			{
            	|(R)| \RR & & |(X)| \XXcan \\
            	& |(C)| \C &  \\
			};

			\path[->]
			(R) edge node[above,sloped,inner sep=3pt] {$\sim$} (X)
			(R) edge node[below,sloped,inner sep=3pt] {$\F$} (C)
			(X) edge node[below,sloped,inner sep=3pt] {$\W$} (C)
			;
		\end{tikzpicture}
	\]
\end{theorem}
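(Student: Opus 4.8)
The plan is to realize the isomorphism explicitly in coordinates; this is natural here because $\dim\RR = 2m-1 = \dim\P(H^*(\Q_{2m-1},\C)^*)$. First I would recall from \cite{rietsch} the precise description of $(\RR,\F)$ for $X=\Q_{2m-1}$ viewed as $\Spin_{2m+1}/P$, with $P$ the maximal parabolic stabilizing an isotropic line: the domain $\RR$ is the open stratum of an affine Richardson variety in the full flag variety of the Langlands dual group $\PSp_{2m}$, and $\F$ is the restriction of the Lie-theoretic superpotential, a sum of ``decorated'' character functions together with the single $q$-term.

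Second, I would introduce the canonical coordinates. On $\RR$ there are distinguished regular functions obtained by restricting Plücker-type coordinates (generalized minors) attached to a suitable fundamental representation of $\PSp_{2m}$; under the ring isomorphism of \cite{rietsch} identifying the Jacobi ring of $\F$ with the quantum cohomology ring $qH^*(\Q_{2m-1},\C)$, these are matched with the Schubert basis, and I name them $\p_0,\dots,\p_{2m-1}$. They define a rational map $\RR \to \X = \P(H^*(\Q_{2m-1},\C)^*)$, $x \mapsto [\p_0(x):\cdots:\p_{2m-1}(x)]$. In contrast to the even case treated in the body of the paper, one checks that the $\p_i$ satisfy no homogeneous relation on $\RR$, so the image lies in $\P^{2m-1}$ itself and not in some mirror quadric.

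Third --- the computational heart --- I would evaluate $\F$ in these coordinates. Choosing an explicit Lusztig-type factorization of a generic point of the relevant Richardson cell, one writes each $\p_i$ as a polynomial in the factorization parameters, substitutes into $\F$, and simplifies, aiming to recover exactly the stated formula $\W = \p_1/\p_0 + \sum_{\ell=1}^{m-1}\p_{\ell+1}\p_{2m-1-\ell}/\delta_\ell + q\,\p_1/\p_{2m-1}$; in particular this identifies the auxiliary quantities $\delta_\ell$ of \eqref{eq:delta} with certain of these minors. The same computation shows that $\F$ is regular precisely away from the locus $\{\p_0\,\delta_1\cdots\delta_{m-1}\,\p_{2m-1}=0\}$, whose degrees sum to $1+2(m-1)+1=2m$; hence this locus is an anticanonical divisor $D$ of $\P^{2m-1}$ and the image of the rational map is contained in $\XXcan = \X\setminus D$.

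Fourth, I would upgrade this to an isomorphism $\RR \to \XXcan$ and conclude. For this I would construct the inverse by hand: given a point of $\XXcan$, written as $[\p_0:\cdots:\p_{2m-1}]$ with $\p_0\,\delta_1\cdots\delta_{m-1}\,\p_{2m-1}\neq 0$, I would write down an element of the flag variety of $\PSp_{2m}$ whose relevant coordinates are the appropriate ratios of the $\p_i$ and the $\delta_\ell$, verify that it lies in the correct open Richardson stratum, and check that the two maps are mutually inverse; commutativity of the triangle then follows from the third step. I expect the main obstacle to be this last step: pinning down the explicit inverse and, above all, proving that it lands in the right stratum, equivalently that the $\p_i$ generate $\C[\RR]$ after inverting $D$ and satisfy no further relations. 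This requires a concrete grip on the coordinate ring of the affine Richardson variety --- via its straightening law or cluster structure, or via a smoothness-plus-dimension argument --- and it is also the place where the argument genuinely exploits that $X$ is a quadric rather than a general $G/P$.
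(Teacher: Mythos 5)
Your proposal is correct and follows essentially the same route as the paper's argument (the statement itself is quoted from \cite{PR2}, but the paper carries out exactly this strategy in full for the even-dimensional analogue, Theorem~\ref{t:even}): factor the map through the unipotent cell, compute the Pl\"ucker coordinates and the superpotential on a Lusztig-type factorization, and identify the image with the anticanonical complement. The one step you flag as the main obstacle --- showing the $\p_i$ and $\delta_\ell$ generate the coordinate ring of the Richardson/unipotent cell after inverting $D$ --- is resolved in the paper not by constructing an explicit inverse but by invoking the Gei\ss--Leclerc--Schr\"oer cluster presentation of $\C[U_-^P]$ in terms of generalized minors and matching each generator with a Pl\"ucker coordinate or a $\delta_\ell$, which is precisely the ``cluster structure'' route you anticipate.
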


A key ingredient in the construction of the isomorphism is the geometric Satake correspondence of \cite{lusztig,Gin:GS,MV}, which identifies the projective space $\X=\P(H^*(\Q_{2m-1},\C)^*)$ containing $\XXcan$ as the projectivisation of a representation of $\PSp_{2m}(\C)$.

In this paper, we prove the same result in the case of even-dimensional quadrics $\Q_{2m-2}$ (see Theorem \ref{t:even}).

\subsubsection*{Comparison with the Givental mirror} 

In Sections \ref{comparison:odd} and \ref{comparison:even}, we relate $(\XXcan,\W)$ to the Givental mirror $(\VV,\G)$. In particular, we prove the following proposition.

\begin{prop}\label{prop:Giv2W}
	There is an embedding, $\VV\hookrightarrow \XXcan$, of the Givental mirror manifold into the canonical mirror such that the volume form $\omegaCan$ on $\XXcan$ (suitably normalized) pulls back to $\omegaGiv$, and the superpotential $\W$ pulls back to $\G$. 
\end{prop}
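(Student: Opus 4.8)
My plan is to construct the embedding $\VV\hookrightarrow\XXcan$ explicitly in coordinates, essentially by composing the known change of variables from Givental's mirror to the Laurent polynomial mirror $(\TT,\LL)$ of~\eqref{e:toricLG} with an embedding $\TT\hookrightarrow\XXcan$, but it will be cleaner to write down directly the rational map $\VV\dashrightarrow\X=\check Q_{2m-2}$ into the mirror quadric and check it lands in $\XXcan$. Concretely, I would first recall from \cite{przyjalkowski} (Remark~19) and Sections~\ref{comparison:odd}--\ref{comparison:even} the substitution expressing the $\nu_i$ in terms of the $z_j$, and then guess the homogeneous coordinates $\p_0,\dotsc,\p_{m-1},\p_{m-1}',\dotsc,\p_{2m-2}$ as monomials in the $z_j$ (equivalently in the $\nu_i$), normalized so that $\p_0=1$. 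The ansatz should be forced by matching the terms of $\W$ against those of $\G$: the summand $\p_1/\p_0$ must become $\nu_1$, the summand $q\,\p_1/\p_{2m-2}$ must become $\nu_{N+1}\nu_{N+2}$-type contribution coming from the constraint $\prod\nu_i=q$, the two "middle" terms $\p_m/\p_{m-1}$ and $\p_m/\p_{m-1}'$ must be the two halves of the quadric splitting that reflects $\nu_{N+1}+\nu_{N+2}=1$, and each $\p_{\ell+1}\p_{2m-2-\ell}/\delta_\ell$ must collapse to the single variable $\nu_{\ell+1}$. This last point is where the quadratic relation defining $\check Q_{2m-2}$ enters: on the image, $\delta_\ell$ must simplify (telescope) to $\p_{\ell+1}\p_{2m-2-\ell}/\nu_{\ell+1}$, and the defining equation $\p_{m-1}\p_{m-1}'-\p_m\p_{m-2}+\dots+(-1)^{m-1}\p_{2m-2}\p_0=0$ must be an identity in the $\nu_i$ after substitution.

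The key steps, in order, are: (1) write the explicit monomial formulas $\p_i=\p_i(\nu_1,\dotsc,\nu_{N+2})$, homogeneous of the appropriate degree and regular on $\VV$; (2) verify the quadric relation is satisfied identically, so the map factors through $\check Q_{2m-2}$; (3) check injectivity and that the image misses the anticanonical divisor, so the map factors through $\XXcan$ and is a locally closed embedding of the right dimension ($\dim\VV=N=2m-2=\dim\XXcan$), hence an open immersion onto its image; (4) compute the pullback of each of the six groups of terms in $\W$ and identify it termwise with $\G=\nu_1+\dots+\nu_N$, using the telescoping identity for $\delta_\ell$ from step~(2); (5) compare volume forms: compute $\omegaCan$ in the $\p$-coordinates via the description in \cite{rietsch} (or, more practically, via the coordinates on $\XXlus$/$\TT$ where the form is torus-invariant $\bigwedge d\log$), pull back, and match against the residue form $\omegaGiv$ of~\eqref{e:giventalVF}, absorbing any overall constant or power of $q$ into the stated normalization of $\omegaCan$.

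The main obstacle I expect is step~(1) together with step~(2): finding the correct monomial ansatz for the $\p_i$ and proving that with that ansatz $\delta_\ell$ telescopes and the defining quadratic relation of $\check Q_{2m-2}$ becomes a polynomial identity in the $\nu_i$. The sign pattern $(-1)^k$ in~\eqref{eq:delta} and the alternating signs in the quadric equation strongly suggest that the $\p_i$ should be, up to sign, partial products $\nu_1\nu_2\cdots\nu_i$ (with a modification at the two middle indices to accommodate the splitting into $\p_{m-1},\p_{m-1}'$ via the factor $\nu_{N+1}+\nu_{N+2}=1$), in which case the alternating sums collapse by cancellation of consecutive terms; but pinning down the exact normalizations, the behavior at indices $\ell=m-2,m-1,m$, and the compatibility with the relation will require care. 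Once the coordinates are correct, steps~(3) and~(4) should be short, and step~(5) is a routine but bookkeeping-heavy residue computation. I would also sanity-check the whole construction in the smallest case (e.g. $m=3$, the quadric $Q_4$) before writing the general argument.
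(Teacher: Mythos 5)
Your plan is essentially the paper's proof: the embedding is obtained by composing Przyjalkowski's change of variables $\VV\cong\TT\setminus\{z_{N}+q=0\}$ with an explicit coordinate change onto a cluster torus (odd case) resp.\ an intersection of two cluster tori (even case) in $\XXcan$, and the identification $\W\mapsto\G$ is checked termwise using exactly the telescoping identities $\delta_{j-1}+\delta_{j-2}=\p_{j-1}\p_{2m-j}$ you anticipate. One correction to your ansatz: the Pl\"ucker coordinates are \emph{not} all monomials in the $\nu_i$ --- for instance in the even case $\p_i = q\,z_1\cdots z_{i-2}\bigl(1+\tfrac{z_{2m-1-i}}{z_{i-2}}\bigr)\tfrac{z_{2m-4}z_{2m-3}}{z_{2m-2}+q}$ for $m+1\le i\le 2m-3$ is a binomial; it is rather the $\delta_\ell$ (and the lower half of the $\p_i$) that are monomials, and the denominator $z_{2m-2}+q=q/\nu_{2m}$ is what forces the map to be defined on $\VV$ rather than on all of $\TT$.
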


An advantage of the mirror $\W$ over its predecessor $\G$ is that the former has the expected number of critical points (at fixed generic value of $q$), namely $\dim(H^*(\Q_N,\C))$. 

\begin{prop}\label{p:critical}
	The superpotential $\W:\XXcan\to \C$ for the mirror of $Q_N$ has $\dim{H^*(Q_N,\C)}$ many non-degenerate critical points. Precisely two of these in the even $N$ case, and one of these in the odd $N$ case, are not contained in the image of the embedding, $\VV\hookrightarrow \XXcan$, of the Givental mirror manifold.
\end{prop}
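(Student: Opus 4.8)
\textbf{Proof plan for Proposition~\ref{p:critical}.}
The plan is to compute the critical points of $\W$ directly in the coordinates $\p_0,\dotsc,\p_{N}$ (suitably interpreted in the even case, where the coordinate $\p_{m-1}$ is split into $\p_{m-1},\p_{m-1}'$ subject to the quadric relation). The overall strategy has two halves: first, transport as much information as possible from the Givental side using Proposition~\ref{prop:Giv2W}; and second, analyze what happens on the complement $\XXcan\setminus\VV$, which is where the `extra' critical points must live.

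First I would recall, from Givental's work (Section~\ref{s:aseries}), that the quantum differential equation of $Q_N$ has a regular singular point and that the number of solutions, equivalently the rank of $M_A$, is $\dim H^*(Q_N,\C)=N+1$ for $N$ odd and $N+2$ for $N$ even (the middle cohomology contributing an extra dimension in the even case). On the Givental mirror $(\VV,\G)$, however, one checks by a Lagrange-multiplier computation on the equations $\prod \nu_i=q$, $\nu_{N+1}+\nu_{N+2}=1$ that $\G$ has only $N$ critical points for $N$ odd and $N-1$ for $N$ even: indeed $d\G=0$ forces $\nu_1=\dots=\nu_N$, and combined with the constraints this becomes a polynomial equation in one variable of the stated degree. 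So $\VV$ is `missing' exactly one critical point when $N$ is odd and exactly two when $N$ is even — precisely the count in the proposition. By Proposition~\ref{prop:Giv2W}, the embedding $\VV\hookrightarrow\XXcan$ identifies $\G$ with $\W|_\VV$, so every critical point of $\G$ gives a critical point of $\W$ in the image of $\VV$, and these are all the critical points of $\W$ lying on $\VV$.

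It then remains to locate the critical points of $\W$ on the complement $D:=\XXcan\setminus\VV$ and show there are exactly one (odd case) or two (even case), all nondegenerate. Here I would use the explicit formula for $\W$: the boundary $D$ is where one of the `new' coordinate ratios degenerates, and one sees from the change of variables in Sections~\ref{comparison:odd}–\ref{comparison:even} that $D$ is cut out inside $\XXcan$ by the vanishing of the coordinate $\p_0$ (equivalently where the monomial change of variables to the $\nu_i$ blows up). Restricting $\W$ to a neighborhood of $D$ in the affine chart where $\p_0$ is allowed to vanish, I would solve $d\W=0$ explicitly: the terms $\p_1/\p_0$ and $q\,\p_1/\p_{2m-2}$ (and, in the even case, $\p_m/\p_{m-1}$, $\p_m/\p_{m-1}'$) force most coordinates to specific values, reducing the system to a low-degree equation with one root in the odd case and two in the even case (the two even-case roots being interchanged by the involution $\p_{m-1}\leftrightarrow\p_{m-1}'$ that $\W$ manifestly admits). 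Nondegeneracy I would check by computing the Hessian at these finitely many explicit points, or, more conceptually, by invoking the isomorphism of Theorem~\ref{t:even} (resp.\ Theorem~\cite[Theorem 1]{PR2}) between the Jacobi ring of $\W$ and the quantum cohomology ring $qH^*(Q_N)$ with $q$ inverted: since $qH^*(Q_N)$ is semisimple for generic $q$ (the quantum Euler class is invertible), the Jacobi ring is reduced of dimension $\dim H^*(Q_N,\C)$, forcing all critical points to be nondegenerate and forcing the total count to be exactly $\dim H^*(Q_N,\C)$ — which then pins down the number on $D$ once the number on $\VV$ is known.

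The main obstacle I anticipate is the explicit solution of $d\W=0$ along the boundary $D$ in the even case: the quadric relation among $\p_0,\dotsc,\p_{2m-2}$ must be carried through the differentiation (e.g.\ via a Lagrange multiplier or by working in an affine chart solving for $\p_{m-1}'$), and one must be careful that the two boundary critical points are genuinely on $\XXcan$ (not on the removed anticanonical divisor) and genuinely off $\VV$. Using the Jacobi-ring/semisimplicity argument circumvents the need to verify nondegeneracy by brute force, so the remaining work is really just the bookkeeping to show the boundary contributes the right number, and that it is $2$ rather than $0$ — which is where exhibiting the critical points explicitly (rather than merely counting) is essential.
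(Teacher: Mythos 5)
Your overall architecture is in fact close to the paper's: the paper also obtains the total count and nondegeneracy of critical points from the Jacobi-ring result of \cite{rietsch}, transported through Theorems~\ref{t:odd-quadric} and~\ref{t:even}, and it identifies the critical points missed by $\VV$ by explicitly solving $d\W=0$ (Propositions~\ref{p:critical-odd} and~\ref{p:critical:even} solve the system on all of $\XXcan$ at once, rather than splitting into $\VV$ and its complement as you do). However, two concrete steps of your plan as written would fail. First, your Lagrange-multiplier count on the Givental mirror is wrong in the even case: the conditions $d\G=\lambda\,d\bigl(\prod\nu_i\bigr)+\mu\,d(\nu_{N+1}+\nu_{N+2})$ force $\nu_1=\dots=\nu_N=t$ and $\nu_{N+1}=\nu_{N+2}=\tfrac12$, hence $t^N=4q$, giving exactly $N$ critical points for every $N$, odd or even. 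Your figure of $N-1$ for even $N$ is not only incorrect but internally inconsistent with your own conclusion, since $\dim H^*(Q_N,\C)=N+2$ for even $N$ and $N+2-(N-1)=3$, not $2$. With the corrected count the arithmetic does match the proposition.

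Second, and more seriously, $\XXcan\setminus\VV$ is not $\{\p_0=0\}$: the hypersurface $\{\p_0=0\}$ is a component of the anticanonical divisor removed in forming $\XXcan$, so $\p_0$ is nowhere vanishing there and your proposed locus is empty. The embeddings of Sections~\ref{comparison:odd} and~\ref{comparison:even} identify $\VV$ with $\{\p_i\neq 0,\ 1\le i\le m\}$ in the odd case, resp.\ with $\{\p_i\neq 0 \text{ for } 0\le i\le m-2,\ \p_m\neq 0\}$ in the even case, so the complement is the locus where one of these \emph{cluster variables} vanishes. The extra critical points found in Propositions~\ref{p:critical-odd} and~\ref{p:critical:even} confirm this: they have $\p_1=\dots=\p_{2m-2}=0$, $\p_{2m-1}=-q$ (odd case), resp.\ $\p_j=0$ for $j\in\{1,\dots,m-2\}\cup\{m,\dots,2m-3\}$, $\p_{m-1}=-\p_{m-1}'=\pm\sqrt{q}$, $\p_{2m-2}=-q$ (even case), and in all of them $\p_0=1\neq 0$. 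A local analysis near $\{\p_0=0\}$ would therefore find nothing. Once the computation is aimed at the correct locus, the remaining ingredients of your plan (the involution $\p_{m-1}\leftrightarrow\p_{m-1}'$ swapping the two even-case points, the verification that the $\delta_\ell$ do not vanish there, and the semisimplicity argument for nondegeneracy and the total count) are sound and essentially reproduce the paper's argument.
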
 

In the special case of $\Q_4$ this lack of critical points of the classical mirror was already observed in \cite{EHX}. It was suggested there to solve it using a partial compactification and this was carried out for the first time, albeit in an ad hoc fashion. This was also a motivation for introducing the Lie-theoretic mirrors $(\RR,\F)$ in \cite{rietsch}. In the odd quadrics case Proposition~\ref{prop:Giv2W} is proved using a combination of results from \cite{GS} and \cite{PR2}. In the even quadrics case we prove it in the present paper. 

The first part of Proposition~\ref{p:critical} is an immediate consequence of analogous result for $(\RR,\F)$ from \cite{rietsch} together with Theorem~\ref{t:odd-quadric} and Theorem~\ref{t:even}, respectively. The second part comes from a direct calculation, see Propositions \ref{p:critical-odd} and \ref{p:critical:even}. 

\subsubsection*{Comparison with $(\TT,\LL)$} 

For odd quadrics $\Q_{2m-1}$ it was proved in \cite{PR2} that after a change of variables, $\TT$ gets identified 
with a particular torus inside $\XXcan$. This embedding identifies the two superpotentials $\LL$ and $\W$. We recall this result in Section \ref{comparison:odd}.

For even quadrics $\Q_{2m-2}$, the situation is more complicated. We consider the complement of a particular hyperplane section in $\TT$ for which we construct an embedding into $\XXcan$ such that $\W$ pulls back to $\LL$ and show that this embedding cannot be extended. Moreover we observe that the image of the embedding is precisely the embedded Givental mirror manifold inside $\XXcan$. Therefore the Givental mirror manifold is in a sense the intersection of the mirrors  $\TT$ and $\XXcan$. These results are contained in Section \ref{comparison:even}.

\subsubsection*{Comparison with the quiver mirror}

The quiver mirror $(\XXlus,\HH)$ is obtained from the Lie-theoretic mirror $(\RR,\F)$, and hence from the canonical mirror $(\XXcan,\W)$, by restricting it to a torus (see Propositions \ref{p:odd-quadric} and \ref{p:W1}).

\subsection{The mirror theorem for $A$-model and $B$-model $D$-modules}

\subsection*{}

Recall that the Dubrovin connection for $Q_N$ gives rise to a module $M_A$ over the ring of differential operators $D=\C[\hbar^{\pm 1}, q^{\pm 1}]\langle \partial_{\hbar},\partial_q\rangle$, see \eqref{e:MA}. On the $B$-side we obtain a  $D$-module $M_B$ by considering a Gauss-Manin system associated to the mirror $(\XXcan,\W)$, see Definition~\ref{d:GM}.  
For odd-dimensional quadrics it is already known that there is an isomorphism between $M_A$ and $M_B$. This follows from \cite[Section 4 \& Appendix A]{GS} together with the comparison result in \cite{PR2}. The isomorphism takes a particularly natural form in the canonical coordinates, as recalled in Theorem~\ref{t:connections-odd}. 

For even dimensional quadrics we construct in Section \ref{sec:connections} an explicit isomorphism from the $A$-model $D$-module $M_A$  to a natural submodule of the $B$-model $D$-module $M_B$, see Theorem~\ref{t:connections-even}. We conjecture that this submodule is in fact all of $M_B$, so that $M_A$ and $ M_B$ are isomorphic. Here our canonical mirror $(\XXcan,\W)$ takes place on a dual quadric. We note that there is a non-trivial cluster algebra structure on the coordinate ring of $\XXcan$, which plays an important role in our proof of the isomorphism.

\subsection{Applications}\label{s:applications}


\subsection*{}

In Section \ref{s:aseries}, we turn to the problem of constructing flat sections $S:\C^*_{\hbar}\times\C^*_q\to H^*(X,\C)$ for a dual version of the Dubrovin connection of the quadric $Q_N$, using the $B$-model. 
Namely we are interested in solutions to the  partial differential equation  
\begin{equation}\label{e:dualDubrovinPDE}
\begin{array}{ccl}	q\frac{\partial S}{\partial q} &=& \ \  \frac{1}{\hbar} \sigma_{1}\star_{q}S, \\
	 \hbar\frac{\partial S}{\partial\hbar}&=& - \frac{1}{\hbar} c_1(TX)\star_{q}S - \operatorname{Gr} (S).
\end{array}
\end{equation}

First we observe that one can write coefficients of flat sections from the $B$-model as oscillating integrals using $(\XXcan,\W)$. This goes as follows.
Consider any critical point $p$ of $\W$. By a procedure outlined by Givental in the setting of full flag varieties in \cite[Section~2]{Givental:QToda}, there should be an associated non-compact, middle-dimensional cycle $\Gamma_p$ in $\XXcan$ for which $\Re(\frac 1{\hbar}\W)\to -\infty$ rapidly in any unbounded direction of $\Gamma_p$ (here we suppress the dependence on $\hbar $ and $q$ in the notation for simplicity). Then, as in \cite[Section 4.2]{MR}, the integrals $\int_{\Gamma_p} e^{\frac{1}\hbar\W}p_i\omega$ locally determine coefficients of a section $S_{\Gamma_p}$. This section is given by the formula
\[
S_{\Gamma_p}=\frac{1}{(2\pi i)^N}\sum_{i=0}^{N}\left(	\int_{\Gamma_p} e^{\frac{1}\hbar\W}p_i\omegaCan\right) \sigma_{N-i}
\]
in the odd quadric case, and by a similar formula in the even quadric case. The local section $S_{\Gamma_p}$
 is a solution to \eqref{e:dualDubrovinPDE} as a consequence of Theorems~\ref{t:connections-odd} and \ref{t:connections-even}.
   With an appropriate partial compactification these cycles should have an interpretation in terms of  Lefschetz thimbles, compare \cite{Seidel:book}.

If we replace the cycle $\Gamma_p$ with a compact torus $(S^1)^N$ we obtain a global holomorphic flat section of the dual Dubrovin connection whose coefficients are given by residue integrals. In Section \ref{s:aseries} we construct this solution explicitly by expanding it as a power series, using the quiver mirror $(\XXlus,\HH)$ to express the integrals in coordinates. Moreover we verify the resulting formula in a different way on the $A$-side.





\section{Landau-Ginzburg models for odd quadrics}

The quadrics  are cominuscule homogeneous spaces (for the Spin groups). Therefore, in addition to the Givental approach \cite{Givental:Toric} for constructing LG~models, there is another LG~model for each quadric on an affine variety (generally larger than a torus), which was defined by the second-named author using a Lie-theoretic construction \cite{rietsch}. Namely for any projective homogeneous space $X=G/P$ of a simple complex algebraic group, \cite{rietsch} constructed a conjectural LG~model, which is a regular function on an affine subvariety of the Langlands dual group. We call it the \emph{Lie-theoretic LG model}. It was shown in \cite{rietsch} that this LG~model recovers the Peterson variety presentation \cite{peterson} of the quantum cohomology of $X=G/P$. It therefore defines an LG~model whose  Jacobi ring has the correct dimension. In this section we will rewrite the Lie-theoretic LG~model in terms of natural projective coordinates on $\P(H^*(\Q_N,\C)^*)$. We call the resulting LG model the \emph{canonical LG model} of $\Q_N$.

Note that for odd-dimensional quadrics $\Q_{2m-1}$ a recent paper \cite{GS} of Gorbounov and Smirnov constructed directly a partial compactification of the Givental mirrors, without making use of \cite{rietsch}. 

\subsection{The canonical LG model for $\Q_{2m-1}$}

LG models for odd-dimensional quadrics with the expected number of critical points have been constructed in \cite{rietsch} (where they appear as a special case), \cite{GS}, and finally \cite{PR2}. Here we recall the main results from the paper \cite{PR2}, which contains the formulation for the LG model which we will adopt.

In this section our $A$-model variety $X=X_N = X_{2m-1}$ is the quadric $Q_N = Q_{2m-1}$. Recall that an odd-dimensional quadric has  one-dimensional cohomology groups in even degrees spanned by Schubert classes $\sigma_i \in H^{2i}(\Q_{2m-1},\C)$ for $0\le i\le 2m-1$, and no other cohomology. To construct its canonical mirror first consider the projective space $\X=\X_{2m-1}=\P^{2m-1}$ with homogeneous coordinates $(\p_0:\p_1:\dots:\p_{2m-1})$ in one-to-one correspondence with these Schubert classes $\sigma_i$. Inside $\X$ we have the open affine subvariety $\XXcan \subset \P^{2m-1}$ defined by:
\begin{equation}\label{e:domain}
	\XXcan = \XX_{2m-1}:= \X \setminus D,
\end{equation}
where $D:=D_0+D_1+\dotsc + D_{m-1}+ D_m$, the divisors $D_i$ being given by
\begin{align*}
 	&D_0:= \left\{ \p_0 = 0 \right\}, \\
 	&D_{\ell}:= \left\{ \sum_{k=0}^{\ell} (-1)^k \p_{\ell-k} \p_{2m-1-\ell+k} = 0 \right\} \text{for $1\leq \ell \leq m-1$,}\\
 	&D_m:= \left\{ \p_{2m-1} = 0 \right\}.
\end{align*}
The divisor $D$ is an anticanonical divisor. Indeed, the index of $\X=\P^{2m-1}$ is $2m$. As a result, there is a unique up to scalar $(2m-1)$-form $\omegaCan$ which is regular on $\XXcan$ and has logarithmic poles on $D$. For all $1 \leq j \leq m-1$, take $r_j \in \{ \p_j, \p_{2m-1-j} \}$. Setting $\p_0=1$, the restriction of $\omegaCan$ to the torus $\{ r_j \neq 0 \mid 1 \leq j \leq m-1 \}$ inside $\XXcan$ is given by
\begin{equation}\label{e:omegaCanOdd}
	\omegaCan = \frac{\bigwedge_{ 1 \leq j \leq m-1} \rmd r_j \wedge \bigwedge_{ 1 \leq \ell \leq m-1} \rmd\delta_\ell \wedge \rmd\p_{2m-1}}{\delta_1 \dots \delta_{m-1} \p_{2m-1}}.
\end{equation}


We have:

\begin{theorem}[{\cite[Theorem 1]{PR2}}]\label{t:odd-quadric}
	The Lie-theoretic LG model $\F:\RR\to \C$ from \cite{rietsch} for $X=\Q_{2m-1}$ is isomorphic to the canonical LG model $\W:\XX_{2m-1}\to \C$ defined by 
    \begin{equation}\label{eq:W2}
    	\W = \frac{\p_{1}}{\p_0} + \sum_{\ell=1}^{m-1} \frac{\p_{\ell+1} \p_{2m-1-\ell}}{\delta_\ell} + q\frac{\p_{1}}{\p_{2m-1}},
    \end{equation}
	where $\delta_{\ell}$ is given by \eqref{eq:delta} with $N=2m-1$.
\end{theorem}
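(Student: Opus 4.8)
We deduce the isomorphism from the explicit Lie-theoretic description of $(\RR,\F)$ in \cite{rietsch}, using the geometric Satake correspondence of \cite{lusztig,Gin:GS,MV} to produce the homogeneous coordinates $\p_0,\dotsc,\p_{2m-1}$. Since $X=\Q_{2m-1}=\Spin_{2m+1}/P$ is cominuscule, \cite{rietsch} realises $\F$ as a regular function on an affine Richardson variety $\RR$ in the flag variety of $\PSp_{2m}$, written as a sum of generalised minors evaluated on a canonical representative $\bar z$ of $z\in\RR$: one summand for each of the $m$ nodes of the Dynkin diagram of $\PSp_{2m}$, together with a single $q$-term attached to the affine node. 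The plan has four steps: (i) identify $\X=\P(H^*(\Q_{2m-1},\C)^*)$ with $\P(V)$, where $V$ is the $2m$-dimensional minuscule (standard) representation of $\Sp_{2m}$; (ii) use the associated embedding of the flag variety to define a morphism $\iota\colon\RR\to\P(V)$; (iii) show that $\iota$ is an open embedding with image $\XX_{2m-1}=\P^{2m-1}\setminus D$; and (iv) transport $\F$ along $\iota$ and recognise it as \eqref{eq:W2}.

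For (i)--(ii): the standard representation $V$ of $\Sp_{2m}$ has exactly $2m$ weights, all lying in a single Weyl orbit, and under geometric Satake these weight vectors correspond to the $2m$ one-dimensional Schubert classes $\sigma_0,\dotsc,\sigma_{2m-1}$, ordered compatibly with the Bruhat order; dually, the $\p_i$ become homogeneous coordinates on $\P(V)$. Using the $\PSp_{2m}$-equivariant embedding of the flag variety into $\prod_{\ell=1}^{m}\P(V_{\varpi_\ell})$ (with $V_{\varpi_\ell}\subset\Lambda^\ell V$) and projecting to the $\P(V)=\P(V_{\varpi_1})$ factor produces $\iota$; one reads off that the two extremal weight coordinates of $V$ are $\p_0$ and $\p_{2m-1}$.

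For (iii)--(iv): the frame function attached to the interior node $\varpi_{\ell+1}$ ($1\le\ell\le m-1$) is a ratio of generalised minors for $\varpi_{\ell+1}$; since $\Lambda^{\ell+1}V$ decomposes under $\Sp_{2m}$ as $V_{\varpi_{\ell+1}}\oplus\Lambda^{\ell-1}V$, evaluating these minors on $\bar z$ and rewriting them in the $\p$-coordinates turns them into degree-two polynomials in the $\p_i$, namely $\p_{\ell+1}\p_{2m-1-\ell}$ in the numerator and the \emph{quadratic form} $\delta_\ell=\sum_{k=0}^{\ell}(-1)^k\p_{\ell-k}\p_{2m-1-\ell+k}$ of \eqref{eq:delta} in the denominator, the alternating signs encoding the symplectic form. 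The two remaining contributions, both associated with the minuscule end $\varpi_1$ of the affine diagram, give $\p_1/\p_0$ and $q\,\p_1/\p_{2m-1}$, so $\F\circ\iota$ is exactly \eqref{eq:W2}. Moreover the poles of these frame functions are the loci $\{\p_0=0\}$, $\{\delta_1=0\},\dotsc,\{\delta_{m-1}=0\}$ and $\{\p_{2m-1}=0\}$, i.e. the components $D_0,\dotsc,D_m$ of $D$ in \eqref{e:domain}, so $\iota$ maps $\RR$ into $\XX_{2m-1}$; to upgrade this into an isomorphism onto $\XX_{2m-1}$ one exhibits an explicit inverse over each torus chart $\{r_j\ne 0\}$, reconstructing a representative of a point of $\RR$ from prescribed values of the $\p_i$. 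Finally one checks that the holomorphic volume form on $\RR$ from \cite{rietsch} pulls back to $\omegaCan$ of \eqref{e:omegaCanOdd}, by a local computation on those charts comparing $\bigwedge d\log(\text{minors})$ with the stated differential form.

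The main obstacle is steps (iii)--(iv): making the geometric Satake dictionary explicit enough to pin down the normalisation of the weight basis, so that the minors become \emph{exactly} the rational functions above rather than merely up to scalar, and above all to explain why the interior denominators are the quadratic forms $\delta_\ell$ and not linear coordinates. This is precisely the point where the non-minuscule fundamental representations $V_{\varpi_2},\dotsc,V_{\varpi_m}$ of $\Sp_{2m}$ enter, via the decompositions $\Lambda^k V\cong V_{\varpi_k}\oplus\Lambda^{k-2}V$; the cleanest way to control the computation is to write $\bar z$ explicitly as a $2m\times 2m$ symplectic matrix and evaluate the relevant matrix coefficients directly. An alternative route, available only for odd quadrics, is to combine \cite{GS}, who partially compactify Givental's mirror of $\Q_{2m-1}$ without reference to \cite{rietsch}, with a separate identification of their compactification with $(\RR,\F)$; but only the geometric Satake argument adapts to the even quadrics treated later in this paper.
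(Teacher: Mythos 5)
Note first that the paper does not prove this statement itself: it is quoted verbatim from \cite[Theorem~1]{PR2}, and the only in-paper analogue with a full proof is the even-dimensional case, Theorem~\ref{t:even} in Section~\ref{s:LGeven}. Your outline matches the approach of the cited proof (the introduction confirms that geometric Satake, identifying $\X$ with $\P(V)$ for $V$ the standard representation of $\PSp_{2m}$, is the key ingredient in \cite{PR2}), and it is structurally parallel to the paper's even-case argument: there too the superpotential terms $e_i^*(u_1)+f_i^*(\bar u_2)$ become ratios of generalised minors, the interior denominators are identified with the quadratic forms $\delta_\ell$ (Lemma~\ref{l:iso}), and the quadratic nature of $\delta_\ell$ comes precisely from the non-minuscule fundamental representations, as you say.

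The genuine difference is in how the two hard steps are discharged. For the isomorphism of varieties, you propose to ``exhibit an explicit inverse over each torus chart''; the paper instead factors the map through the unipotent cell $U_-^P$ via the Berenstein--Zelevinsky twist (Lemma~\ref{l:theta}) and then invokes the Geiss--Leclerc--Schr\"oer presentation of $\C[U_-^P]$ (Theorem~\ref{t:GLS}, Corollary~\ref{cor:pres}). That presentation does double duty: it shows at once that the relevant minors are invertible on $U_-^P$ (so the image lands in $\XXcan$ and not just in $\X$) and that the pullback map on coordinate rings is surjective, because every generator of $\C[U_-^P]$ is exhibited as a pullback of a Pl\"ucker coordinate or a $\delta_\ell$. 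This is the step your proposal leaves thinnest: openness of $\iota$ onto \emph{exactly} the complement of $D$ (i.e.\ that no point of $\XX_{2m-1}$ is missed) does not follow from the pole analysis alone, and an ``explicit inverse'' would amount to re-deriving the GLS result by hand. For the identification of $\F\circ\iota$ with $\W$, the paper does not evaluate minors on a global matrix representative; it restricts both functions to the dense Lusztig torus $\XXlus$ with factorization coordinates as in \eqref{e:u2barfactb}, computes both sides as the same Laurent polynomial $\HH$ (Lemmas~\ref{l:bislemma} and~\ref{p:eis}), and concludes by density. Your direct matrix-coefficient computation would also work but is where the normalisation issues you flag actually bite; the torus restriction sidesteps them. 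So: right strategy, but the two places you identify as ``the main obstacle'' are exactly where the proof needs the BZ twist, the GLS presentation, and the torus factorization, none of which your outline supplies.
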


We also have another expression for the superpotential:

\begin{prop}[{\cite[Proposition 8]{PR2}}]\label{p:odd-quadric}
	For $X=Q_{2m-1}$ and $\W$ as above, there is a torus $\XXlus:=(\C^*)^{2m-1}\hookrightarrow \XXcan$ to which $\W$ pulls back giving the Laurent polynomial expression
	\begin{equation*}
   		\HH = a_1 + \dots + a_{m-1} + c +  b_{m-1} + \dots + b_1 + q \frac{a_1+b_1}{a_1 \dots a_{m-1} c  b_{m-1} \dots b_1}. 
	\end{equation*} 
\end{prop}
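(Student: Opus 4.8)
The plan is to exhibit the torus $\XXlus$ explicitly as a subtorus of $\XXcan$ by writing down a parametrization in the projective coordinates $(\p_0:\dots:\p_{2m-1})$ in terms of the coordinates $a_1,\dots,a_{m-1},c,b_{m-1},\dots,b_1$, and then to substitute this into the superpotential formula \eqref{eq:W2} and simplify. Since \cite[Proposition 8]{PR2} is the stated source, I would recall their argument: the Lusztig-type coordinates arise from a one-parameter-subgroup factorization in the Langlands dual group, and under the isomorphism $\RR\to\XXcan$ of Theorem~\ref{t:odd-quadric} these coordinates on $\XXlus\subset\RR$ transport to explicit rational functions of the $\p_i$. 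Concretely, I expect the map $\XXlus=(\C^*)^{2m-1}\hookrightarrow\XXcan$ to be given by setting $\p_0=1$ and choosing, say, $\p_\ell$ and the $\delta_\ell$ (for $1\le\ell\le m-1$) together with $\p_{2m-1}$ as a system of coordinates on a torus chart of $\XXcan$ (this is exactly the chart on which $\omegaCan$ takes the monomial form \eqref{e:omegaCanOdd}), and then expressing these $2m-1$ functions as monomials in $a_i,b_i,c$.

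First I would fix the torus chart: by equation~\eqref{e:omegaCanOdd}, on the locus where $\delta_1,\dots,\delta_{m-1},\p_{2m-1}$ are all nonzero and (choosing $r_j=\p_j$) the $\p_1,\dots,\p_{m-1}$ are nonzero, the functions $(\p_1,\dots,\p_{m-1},\delta_1,\dots,\delta_{m-1},\p_{2m-1})$ form a coordinate system on a $(2m-1)$-dimensional torus inside $\XXcan$. Second, I would write down the change of variables to $(a_1,\dots,a_{m-1},c,b_{m-1},\dots,b_1)$; the natural guess, matching the shape of $\HH$ and the three summand-groups of \eqref{eq:W2}, is something like $\p_{\ell}/\p_{\ell-1}$-type ratios giving the $a$'s, the ratios $\delta_\ell$ against products of $\p$'s giving the $b$'s, $\p_1/\p_{2m-1}$ (times $q$) matching the last term, and $c$ coming from the `middle' term $\p_m\p_{m-1}/\delta_{m-1}$; one then inverts to express each $\p_i$ and each $\delta_\ell$ as a Laurent monomial in $a,b,c$. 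Third, I would substitute into \eqref{eq:W2}: the term $\p_1/\p_0$ becomes $a_1$; each $\p_{\ell+1}\p_{2m-1-\ell}/\delta_\ell$ becomes the corresponding single variable ($a_2,\dots,a_{m-1},c,b_{m-1},\dots,b_1$ in order); and $q\p_1/\p_{2m-1}$ becomes $q(a_1+b_1)/(a_1\cdots a_{m-1}c\,b_{m-1}\cdots b_1)$ after using the explicit monomial expressions — the numerator $a_1+b_1$ presumably reflecting that $\p_1$ is, in these coordinates, a sum of two monomials because $\p_1$ is not one of the chosen chart coordinates but an auxiliary function determined by the quadratic-type relations among the $\delta_\ell$.

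The main obstacle I anticipate is pinning down the precise change of variables and, in particular, verifying that the last monomial $q\p_1/\p_{2m-1}$ produces exactly the numerator $a_1+b_1$ rather than a single term: this requires understanding how $\p_1$ is recovered from the chart coordinates, which in turn uses the defining relations of $\RR$ (equivalently, the identities relating consecutive $\delta_\ell$'s and $\p_i$'s coming from the Langlands-dual group structure and the geometric Satake identification mentioned in the text). Once that relation is in hand the computation is a routine monomial bookkeeping. Since the statement is quoted verbatim from \cite[Proposition 8]{PR2}, the cleanest route is simply to cite that result and recall the explicit embedding and substitution; I would present the parametrization of $\XXlus\hookrightarrow\XXcan$ and then remark that plugging it into \eqref{eq:W2} and simplifying — using the identity expressing $\p_1$ in the chosen coordinates — yields $\HH$ as displayed.
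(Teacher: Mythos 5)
Your overall plan (exhibit the torus, substitute into \eqref{eq:W2}, simplify) has the right shape, but the torus you propose is the wrong one, and this breaks the computation. The torus $\XXlus$ is \emph{not} the cluster chart on which $(\p_1,\dots,\p_{m-1},\delta_1,\dots,\delta_{m-1},\p_{2m-1})$ are monomial coordinates, and the passage to $(a_i,b_i,c)$ is \emph{not} a monomial change of variables. The coordinates $a_i,b_i,c$ are Lusztig-type factorization parameters: one writes a generic $\bar u_2\in U_-^P$ as a product of one-parameter subgroups $y_1(a_1)\cdots y_{m-1}(a_{m-1})\,y_m(c)\,y_{m-1}(b_{m-1})\cdots y_1(b_1)$ (this is the odd analogue of \eqref{e:u2barfactb}), and $\XXlus$ is the image of this parametrization under the isomorphism $\RR\cong\XXcan$. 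On this torus the Pl\"ucker coordinates are \emph{polynomials}, not monomials, in $a,b,c$: the analogue of Lemma~\ref{l:bislemma} gives $\p_k=a_1\cdots a_{k-1}(a_k+b_k)$ for small $k$, while the $\delta_\ell$ and $\p_{2m-1}$ are monomials. In particular $\XXlus$ is not contained in $\{\p_k\neq 0\}$ (the locus $a_k=-b_k$ lies in $\XXlus$), so it cannot be the chart you chose. A monomial substitution on the cluster chart is exactly what produces the \emph{other} Laurent mirror $(\TT,\LL)$ of Section~\ref{comparison:odd}, which the paper explicitly states is not isomorphic to $(\XXlus,\HH)$.

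This misidentification shows up concretely in your term-by-term bookkeeping, which cannot work as stated: you assign to each of the $m$ non-$q$ summands of $\W$ a \emph{single} variable, but $\HH$ has $2m-1$ linear terms, so the count is off by a factor of two. The correct restriction is $\p_1/\p_0=a_1+b_1$ and $\p_{\ell+1}\p_{2m-1-\ell}/\delta_\ell=a_{\ell+1}+b_{\ell+1}$ for $1\le\ell\le m-2$ (each a \emph{sum} of two coordinates, because the numerator carries the binomial factor $a_{\ell+1}+b_{\ell+1}$ while $\delta_\ell$ is a pure monomial that cancels the rest), with only the middle term $\p_m^2/\delta_{m-1}$ giving the single variable $c$; finally $q\,\p_1/\p_{2m-1}$ gives the $q$-term since $\p_1=a_1+b_1$ and $\p_{2m-1}=a_1\cdots a_{m-1}c\,b_{m-1}\cdots b_1$. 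You correctly sensed that $a_1+b_1$ in the numerator needs explaining, but your explanation (that $\p_1$ is an ``auxiliary function'' recovered from relations among the $\delta_\ell$) contradicts your own choice of $\p_1$ as a chart coordinate; the real reason is the additive structure of the unipotent factorization. Compare the proofs of Lemma~\ref{p:eis}, Proposition~\ref{p:W1} and Theorem~\ref{t:even} in the even case, which is exactly how the cited argument for the odd case runs.
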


We call the Laurent polynomial LG model $(\XXlus,\W)$ from Proposition \ref{p:odd-quadric} the \emph{quiver mirror}. The reason for this denomination will be made clear in Section \ref{sec:quiver}.

\subsection{Comparison with the Givental and Laurent polynomial mirrors for odd quadrics}\label{comparison:odd}

Let us recall the Laurent polynomial LG model of $\Q_{2m-1}$ from Equation \eqref{e:toricLG}
\[
	\LL = z_1 + \dots + z_{2m-2} + \frac{(z_{2m-1}+q)^2}{z_1 z_2 \dots z_{2m-1}},
\]
defined over the torus
\[
	\TT:=\left\{ (z_1,\dots,z_{2m-1}) \mid z_i \neq 0\quad \forall\; i\right\},
\] 
and the Givental LG model from Equation \eqref{e:giventalLG}
\[
	\G = \nu_1 + \dots + \nu_{2m-1},
\]
defined over the affine variety 
\[
	\VV=\left\{ (\nu_1,\dots,\nu_{2m+1}) \mid \nu_i \neq 0\; \forall\; i,\,\  \prod_{i=1}^{2m+1}\nu_i=q,\ \ \nu_{2m}+\nu_{2m+1}=1\right\}.
\]
These two LG models are related by a birational change of coordinates analogous to that of \cite[Rmk.~19]{przyjalkowski}, namely
\begin{align*}
	z_i = \begin{cases}
			\nu_{i+1} & \text{for $1 \leq i \leq 2m-2$;} \\
			q\frac{\nu_{2m}}{\nu_{2m+1}} & \text{for $i=2m-1$;} 
		  \end{cases}
\end{align*}
and conversely
\begin{align*}
	\nu_i = \begin{cases}
			\frac{(z_{2m-1}+q)^2}{z_1 \dots z_{2m-1}} & \text{for $i=1$;} \\	
			z_{i-1} & \text{for $2 \leq i \leq 2m-1$;} \\
			\frac{z_{2m-1}}{z_{2m-1}+q} & \text{for $i=2m$;} \\
			\frac{q}{z_{2m-1}+q} & \text{for $i=2m+1$.}
		  \end{cases}
\end{align*}
This change of variables defines an isomorphism
\[
	\TT \setminus \{ z_{2m-1}+q=0 \} \cong \VV
\]
which identifies the superpotentials $\LL$ and $\G$.

Let us now compare these two LG models with ours. Consider the change of coordinates
\begin{align*}
	z_i = \begin{cases}
			\frac{\p_i}{\p_{i-1}} & \text{for $1 \leq i \leq m-1$;} \\
			\frac{\p_{2m-1-i}\delta_{2m-3-i}}{\p_{2m-2-i}\delta_{2m-2-i}} & \text{for $m \leq i \leq 2m-3$;} \\
			q\frac{\p_1}{\p_{2m-1}} & \text{for $i=2m-2$;} \\
			q\frac{\delta_{m-2}}{\delta_{m-1}} & \text{for $i=2m-1$.} 
		  \end{cases}
\end{align*}
It is well-defined on the cluster torus $\{ \p_i \neq 0 \; |\, \forall\, 1 \leq i \leq m-1 \}$ inside $\XXcan$. Moreover, an easy calculation shows that it transforms the canonical LG model \eqref{eq:W2} into the Laurent polynomial LG model \eqref{e:toricLG} for odd quadrics. 

Indeed, using this change of variables we see that $z_1 \dots z_{2m-1}$ maps to 
\[
	\frac{\p_{m-1}}{\p_0}\cdot \frac{\p_{m-1}\delta_0}{\delta_{m-2}\p_1}\cdot q\frac{\p_1}{\p_{2m-1}} \cdot q\frac{\delta_{m-2}}{\delta_{m-1}} = q^2\frac{(\p_{m-1})^2}{\delta_{m-1}}.
\]
Moreover $(z_{2m-1}+q)^2$ maps to $\left(q\frac{\p_{m-1}\p_m}{\delta_{m-1}}\right)^2$ since $\delta_{m-1}+\delta_{m-2}=\p_{m-1}\p_m$. It follows that $\frac{(z_{2m-1}+q)^2}{z_1 \dots z_{2m-1}}$ maps to $\frac{\p_m^2}{\delta_{m-1}}$. We also see that for $2 \leq j \leq m-1$, $z_j+z_{2m-1-j}$ maps to $\frac{\p_j\p_{2m-j}}{\delta_{j-1}}$ since $\delta_{j-1}+\delta_{j-2} = \p_{j-1} \p_{2m-j}$. Hence via this change of variables, the Laurent polynomial superpotential $\LL$ maps to
\[
	\frac{\p_1}{\p_0} + \sum_{j=2}^{m-1} \frac{\p_j\p_{2m-j}}{\delta_{j-1}} + q \frac{\p_1}{\p_{2m-1}} +\frac{\p_m^2}{\delta_{m-1}},
\]
which is precisely the expression of $\W$.

Note that this change of coordinates between $(\TT,\LL)$ and $(\XXcan,\W)$ may also be obtained by combining the isomorphism between  \eqref{eq:W2} and the Gorbounov-Smirnov mirror from \cite[Section 6]{PR2}, with the comparison between the Gorbounov-Smirnov mirror and the Laurent polynomial mirror (there called the Hori-Vafa mirror) in \cite{GS}.

Combining both changes of coordinates, we obtain an embedding of the Givental mirror variety $\VV \hookrightarrow \XXcan$, corresponding to the change of coordinates
\begin{align*}
	\nu_i = \begin{cases}
			\frac{\p_m^2}{\delta_{m-1}} & \text{for $i=1$;} \\
			\frac{\p_{i-1}}{\p_{i-2}} & \text{for $2 \leq i \leq m$;} \\
			\frac{\p_{2m-i}\delta_{2m-2-i}}{\p_{2m-1-i}\delta_{2m-1-i}} & \text{for $m+1 \leq i \leq 2m-2$;} \\
			q\frac{\p_1}{\p_{2m-1}} & \text{for $i=2m-1$;} \\
			\frac{\delta_{m-2}}{\p_{m-1}\p_m} & \text{for $i=2m$;} \\
			\frac{\delta_{m-1}}{\p_{m-1}\p_m} & \text{for $i=2m+1$.} 
		  \end{cases}
\end{align*}
The embedding identifies $\VV$ with the intersection of cluster tori $\{ \p_i \neq 0 \; |\, \forall\, 1 \leq i \leq m \}$ in $\XX$, the superpotential $\G$ with $\W$, and the form $\omegaGiv$ with $\omegaCan$. This proves Proposition \ref{prop:Giv2W} from the introduction in the case of odd quadrics.

\subsection{The critical points of the canonical mirror}

Since the canonical mirror $(\XXcan,\W)$ is isomorphic to the Lie-theoretic mirror $(\RR,\F)$, it follows from \cite{rietsch} that $\W$ has the `correct' number of critical points on $\XXcan$, that is, $\dim H^*(\Q_{2m-1},\C)=2m$. Here we give explicit expression for the critical points, and compare with the critical points of the classical mirrors $(\VV,\G)$ and $(\TT,\LL)$.

\begin{prop}
\label{p:critical-odd}
	The critical points of the superpotential $\W$ on $\XXcan$ are given by
	\begin{align*}
		\p_j = 	\begin{cases}
					\zeta^j & \text{if $1 \leq j \leq m-1$ ;} \\
					\frac{1}{2}\zeta^j & \text{if $m \leq j \leq 2m-2$ ;} \\
					q & \text{if $j=2m-1$},
				\end{cases}
	\end{align*}
	where $\zeta$ is a primitive $(2m-1)$-st root of $4q$. The associated critical value is $(2m-1)\zeta$.
	Moreover there is an extra critical point given by $\p_1 = \dots = \p_{2m-2} = 0$, $\p_{2m-1}=-q$ with corresponding critical value $0$. This critical point does not belong to $\TT$, $\VV$ or $\XXlus$.
\end{prop}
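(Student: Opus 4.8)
The plan is to compute the critical locus of $\W$ directly in the coordinates $\p_1,\dots,\p_{2m-1}$ (with $\p_0=1$), exploiting the fact that $\W$ is a sum of simple rational monomials in the $\p_j$ and the $\delta_\ell$. First I would record the partial derivatives. Using $\delta_\ell = \sum_{k=0}^\ell(-1)^k\p_{\ell-k}\p_{2m-1-\ell+k}$ one has $\partial\delta_\ell/\partial\p_j$ equal to $\pm\p_{2m-1-\ell+ (\ell-j)} = \pm \p_{2m-1-j}$ when $\ell-j$ is in range, and similarly for the other factor, so each $\delta_\ell$ is bilinear in the $\p$'s with a transparent derivative pattern. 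Setting $\partial\W/\partial\p_j = 0$ for $j=1,\dots,2m-1$ then gives a system in which, after clearing denominators, each equation is polynomial; the key structural observation to extract is the telescoping identity $\delta_\ell + \delta_{\ell-1} = \p_{\ell}\p_{2m-1-\ell}$ for $1\le\ell\le m-1$ (with $\delta_0 = \p_0\p_{2m-1} = \p_{2m-1}$), which is exactly the identity already used in Section~\ref{comparison:odd}. This identity lets one rewrite cross terms and should collapse the system into a recursion among consecutive $\p_j$.

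Next I would guess-and-verify the stated solution. Substituting $\p_j = \zeta^j$ for $1\le j\le m-1$, $\p_j = \tfrac12\zeta^j$ for $m\le j\le 2m-2$, and $\p_{2m-1}=q$, I would first compute each $\delta_\ell$ explicitly: the sum $\sum_{k=0}^\ell(-1)^k\p_{\ell-k}\p_{2m-1-\ell+k}$ becomes a geometric-type sum in $\zeta$, where the factor $\tfrac12$ enters precisely once in each product (since $\ell-k\le m-1 < 2m-1-\ell+k$ for the relevant ranges when $\ell\le m-1$), giving $\delta_\ell = \tfrac12(\ell+1)\zeta^{2m-1}\cdot(\text{sign})$ or similar — the point is that $\delta_\ell$ becomes a constant multiple of $\zeta^{2m-1}=\zeta\cdot\zeta^{2m-2}$, and since $\zeta^{2m-1}=4q$ this is a concrete number. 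Then each summand $\p_{\ell+1}\p_{2m-1-\ell}/\delta_\ell$ and $\p_1/\p_0$, $q\p_1/\p_{2m-1}$ evaluates to a multiple of $\zeta$, and checking that all partials vanish reduces to a finite set of root-of-unity identities. The critical value $\sum$ then sums to $(2m-1)\zeta$ after collecting; since $\zeta$ ranges over the $(2m-1)$ distinct $(2m-1)$-st roots of $4q$, this accounts for $2m-1$ critical points. For the extra critical point $\p_1=\dots=\p_{2m-2}=0$, $\p_{2m-1}=-q$, I would check that this actually lies in $\XXcan$ (i.e. $\p_0\ne0$, all $\delta_\ell\ne0$, $\p_{2m-1}\ne0$): here $\delta_\ell = (-1)^\ell\p_0\p_{2m-1} = (-1)^\ell(-q)\ne0$ by the vanishing of all intermediate $\p_j$, so the point is in the domain, and one verifies $\partial\W/\partial\p_j=0$ there by inspecting which terms survive — most terms in $\W$ and their derivatives vanish to high order at this point. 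This gives the $2m$-th critical point, matching $\dim H^*(\Q_{2m-1},\C)$, hence (by Theorem~\ref{t:odd-quadric} and \cite{rietsch}) these are \emph{all} the critical points.

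Finally, to see that the extra point lies in none of $\TT$, $\VV$, $\XXlus$, I would trace it through the explicit changes of coordinates recorded in Section~\ref{comparison:odd}. Under the embedding $\VV\hookrightarrow\XXcan$, we have $\nu_2 = \p_1/\p_0 = 0$, which is forbidden in $\VV$ (all $\nu_i\ne0$); similarly $z_1 = \p_1/\p_0 = 0$ is forbidden in $\TT$; and for $\XXlus$ one uses the Lusztig-type coordinates (Proposition~\ref{p:odd-quadric}) in which, chasing the torus embedding, some coordinate again vanishes. So it suffices to exhibit one vanishing torus coordinate in each case, which is immediate from the formulas.

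I expect the main obstacle to be the bookkeeping in evaluating the $\delta_\ell$ at the candidate critical point and confirming that \emph{every} partial derivative vanishes: the sum defining $\delta_\ell$ has alternating signs and a ``seam'' at index $m-1$ where the $\tfrac12$ scaling begins, so one must be careful about parity and about the boundary cases $\ell=1$ and $\ell=m-1$ (and the terms $\p_1/\p_0$, $q\p_1/\p_{2m-1}$, and $\p_m^2/\delta_{m-1}$ which behave slightly differently from the generic middle terms). Once the $\delta_\ell$ are pinned down as explicit constants times powers of $\zeta$, the remaining verification is a routine but lengthy manipulation of the relation $\zeta^{2m-1}=4q$. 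Since \cite{rietsch} guarantees there are exactly $2m$ critical points and that they are non-degenerate, I do not need an independent non-degeneracy argument, though one could also check it directly via the Hessian if desired.
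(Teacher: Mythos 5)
Your plan is sound and would yield a correct proof, but it proceeds by a genuinely different route than the paper. You propose to \emph{guess and verify}: substitute the candidate points, check that every partial derivative vanishes, and then conclude completeness from the external fact (quoted just before the proposition, from \cite{rietsch} via Theorem~\ref{t:odd-quadric}) that $\W$ has exactly $2m=\dim H^*(\Q_{2m-1},\C)$ critical points -- noting that your $2m$ exhibited points are pairwise distinct since $\p_1$ takes the $2m-1$ distinct values $\zeta$ plus the value $0$. The paper instead \emph{derives} the critical points: starting from $\partial\W/\partial\p_m=0$, which factors as $\p_m(\p_{m-1}\p_m-2\delta_{m-2})=0$, it splits into the two cases $\p_m=0$ (propagating to the extra point $(0,\dots,0,-q)$) and $\p_{m-1}\p_m=2\delta_{m-2}$ (propagating to $\delta_0=\dots=\delta_{m-1}=q$, $\p_j\p_{2m-1-j}=2\delta_{j-1}$, and eventually $\p_1^{2m-1}=4q$). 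The derivation buys completeness and uniqueness for free, with no appeal to the critical-point count; your approach buys a shorter verification at the cost of invoking that count (which is legitimately available here). Two small remarks on your verification step: the evaluation of $\delta_\ell$ at the candidate point is cleaner than you anticipate -- the alternating signs make every $\delta_\ell$ equal to $\tfrac14\zeta^{2m-1}=q$ exactly (not $\tfrac12(\ell+1)\zeta^{2m-1}$), which is what makes the remaining checks collapse; and for the non-membership of the extra point in $\TT$, $\VV$, $\XXlus$, your argument should be phrased as ``on the image of each torus embedding, $\p_1$ (or $\p_{m-1}$, etc.) is a nonvanishing Laurent monomial in the torus coordinates, hence cannot be $0$,'' which is exactly what the explicit coordinate changes of Section~\ref{comparison:odd} and Proposition~\ref{p:odd-quadric} give you.
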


\begin{proof}
Setting $\p_0=1$ we get the following relations at a critical point of $\W$:
\begin{align}
	\frac{\partial \W}{\partial \p_1} &= 1 + \left(\sum_{\ell=1}^{m-1} (-1)^\ell \frac{\p_{\ell+1}\p_{2m-1-\ell}}{\delta_\ell^2}\right) \p_{2m-2} + \frac{q}{\p_{2m-1}} = 0 \label{e:1}\\
	\frac{\partial \W}{\partial  \p_j} &= \frac{\p_{2m-j}}{\delta_{j-1}} + \left( \sum_{\ell=j}^{m-1} (-1)^{\ell+1-j} \frac{\p_{\ell+1}\p_{2m-1-\ell}}{\delta_\ell^2} \right) \p_{2m-1-j} = 0\; (2 \leq j \leq m-1) \label{e:j} \\
	\frac{\partial \W}{\partial  \p_m} &= \frac{\p_m(\p_{m-1}\p_m-2\delta_{m-2})}{\delta_{m-1}^2}=0 \label{e:m}\\
	\frac{\partial \W}{\partial  \p_j} &= -\frac{\p_{2m-j}\delta_{2m-2-j}}{\delta_{2m-1-j}^2} + \left( \sum_{\ell = 2m-j}^{m-1} (-1)^{\ell+j-2m} \frac{\p_{\ell+1}\p_{2m-1-\ell}}{\delta_\ell^2} \right)\p_{2m-1-j}=0\; (m+1 \leq j \leq 2m-2) \label{e:k} \\
	\frac{\partial \W}{\partial  \p_{2m-1}} &= \sum_{\ell=1}^{m-1} (-1)^{\ell-1} \frac{\p_{\ell+1}\p_{2m-1-\ell}}{\delta_\ell^2}-q\frac{\p_1}{\p_{2m-1}^2}=0 \label{e:2m-1}.
\end{align}

From Equation \eqref{e:m} it follows that we have two possibilities, i.e. $\p_m=0$, or $\p_{m-1}\p_m=2\delta_{m-2}$. If $\p_m=0$, using \eqref{e:j} for $j=m-1,m-2,\dots,2$ shows that $\p_m=\p_{m+1}=\p_{2m-2}=0$. Then \eqref{e:1} implies $\p_{2m-1}=-q$. Using \eqref{e:k} for $j=m+1,m+2,\dots,2m-2$ shows that $\p_{m-1}=\p_{m-2}=\dots=\p_2=0$. Finally \eqref{e:2m-1} implies $\p_1=0$. At the corresponding critical point $(0,\dots,0,-q)$, the value of $\W$ (the critical value) is clearly $0$.

Let us now assume $\p_m \neq 0$ and $\p_{m-1}\p_m=2\delta_{m-2}$, so that $\delta_{m-1}=\delta_{m-2}$. Combining Equations \eqref{e:j} for $j=m-1$ and \eqref{e:k} for $j=m+1$, we obtain $\p_{m-2}\p_{m+1}=2 \delta_{m-3}$, hence $\delta_{m-2}=\delta_{m-3}$. Iteratively, we obtain
\begin{align}
	&\delta_{m-1}=\delta_{m-2}=\dots=\delta_0 ; \label{e:deltaEq} \\
	&\p_j \p_{2m-1-j} = 2 \delta_{j-1} \quad \forall\; 1\leq j \leq m-1.
\end{align}

Combining Equations \eqref{e:1} and \eqref{e:2m-1} with the identity \eqref{e:deltaEq}, we get that
\begin{equation}
	\p_{2m-1}=q,
\end{equation}
hence all the $\delta_j$ are equal to $q$.

Now Equations \eqref{e:j} for $j=m-1$ and \eqref{e:deltaEq} imply $\p_{m-1}\p_{m+1}=2\p_m^2$. Then Equation \eqref{e:j} for $j=m-2$ and \eqref{e:deltaEq} imply that $\p_{m-2}\p_{m+2}=2(\p_{m-1}\p_{m+1}-\p_m^2)$. Inductively for $j=m-3,\dots,2$ we obtain 
\begin{equation}\label{e:star}
	\sum_{\ell=j}^m (-1)^{\ell-j} \p_\ell \p_{2m-\ell} = \p_m^2
\end{equation}
for all $2 \leq j \leq m$. Then \eqref{e:1} implies that $\p_m^2=q\p_1$.

Finally, \eqref{e:k} and \eqref{e:star} imply $\p_j=\frac{1}{2}\p_1^j$ for $m \leq j \leq 2m-2$, while \eqref{e:j} and \eqref{e:star} imply $\p_j=\p_1^j$ for $1 \leq j \leq m-1$. Then $\p_m=\frac{1}{2}\p_1^m$ together with $\p_m^2=q\p_1$ implies that $\p_1^{2m-1}=4q$, which concludes the proof.
\end{proof}

\section{Landau-Ginzburg models for even quadrics}\label{s:LGeven}

We view the quadric $X=X_{2m-2}:=\Q_{2m-2}$ of dimension $2m-2$ as a homogeneous space for the Spin group $\mathrm{Spin}_{2m}(\C)$. In this section we will introduce a \emph{canonical LG model} for $X_{2m-2}$ which will be defined on an open subvariety of a dual quadric $\X_{2m-2}=P\backslash \PSO_{2m}(\C)$, see Section~\ref{s:dualquadric}. Note that the projective special orthogonal group $\PSO_{2m}(\C)$ is the Langlands dual group to $\mathrm{Spin}_{2m}(\C)$, and both groups have the same Dynkin diagram, namely the Dynkin diagram of type~$D_m$. The main result of this section, Proposition~\ref{p:iso-even}, shows that the new LG-model is isomorphic to one defined earlier \cite{rietsch} on a Richardson variety  $\RR$ inside the full flag variety of $\PSO_{2m}(\C)$. 

Note that in the following we will denote the group $\PSO_{2m}(\C)$  by $G$, since this is the group we will primarily be working with. Then  the  $A$-model symmetry group is  $G^\vee=\mathrm{Spin}_{2m}(\C)$, and we have $X_{2m-2}=G^\vee/P^\vee$, where $P^\vee$ is the parabolic subgroup  associated to the first node of the Dynkin diagram of type~$D_m$.

\begin{center}
\begin{tikzpicture}
	\tikzstyle{every node}=[draw,circle,fill=white,minimum size=4pt,
                            inner sep=0pt,label distance=2pt]
                            
	\node[draw,fill=black,circle,minimum size=4pt,label=below:{1}] (1) at (0,0) {};
	\node[label=below:{2}] (2) at (1,0) {};
	\node[label=below:{3}] (3) at (2,0) {};
	\node (4) at (3,0) {};
	\node[label=below:{$m-2$}] (5) at (4,0) {};
	\node[label=right:{$m$}] (m) at (5,0.5) {};
	\node[label=right:{$m-1$}] (m-1) at (5,-0.5) {};
	
	\draw
	(1) -- (2)
	(2) -- (3)
	(4) -- (5)
	(5) -- (m)
	(5) -- (m-1);
	
	\draw[loosely dotted]
	(3) -- (4);
\end{tikzpicture}
\end{center}

\subsection{Notations and definitions}\label{s:notationeven}

Let $V=\C^{2m}$ with fixed quadratic form
\[
	Q=	\begin{pmatrix}
    		&  &  & & 1\\
    		&  &  & -1 & \\
    		& & \Ddots & \\
    		& -1 &  &  &\\
    		1 & & & &
   		\end{pmatrix}.
\] 
In other words $Q(v_{i},v_{j})=(-1)^{\max(i,j)}\delta_{i+j,2m+1}$ where $\{v_i\}$ is the standard basis of $\C^{2m}$. For $G=\PSO(V,Q)=\PSO(V)$ we fix Chevalley generators $(e_i)_{1 \leq i \leq m}$ and $(f_i)_{1 \leq i \leq m}$. To be explicit we embed $\mathfrak{so}(V,Q)$ into $\mathfrak{gl}(V)$ and set
\begin{align*}
	e_i = \begin{cases}
			E_{i,i+1}+E_{2m-i,2m-i+1}  & \text{if $1 \leq i \leq m-1$,} \\
			E_{m-1,m+1}+E_{m,m+2} & \text{if $i=m$,}
		  \end{cases}
\end{align*}
and $f_i:=e_i^T$, the transpose matrix, for every $i=1,\dotsc, m$. Here $E_{i,j}=(\delta_{i,k}\delta_{l,j})_{k,l}$ is the standard basis of $\mathfrak{gl}(V)$. For elements of the group $\PSO(V)$, we will take matrices to represent their equivalence classes. We have Borel subgroups $B_+=T U_+$ and $B_-=T U_-$ consisting of upper-triangular and lower-triangular matrices in $\PSO(V)$, respectively. Here $U_+$ and $U_-$ are the unipotent radicals of $B_+$ and $B_-$, respectively, and $T$ is the maximal torus of $\PSO(V)$,  consisting of diagonal matrices  $(d_{ij})$ with non-zero entries $d_{i,i}=d_{2m-i+1,2m-i+1}^{-1}$. We let $X(T) = \Hom(T,\C^*)$, $R \subset X(T)$ the set of roots, and $R^+$ the positive roots.  We denote the set of simple roots by 
$\Pi= \{\alpha_i \ \vert \ 1 \leq i \leq m\} \subset R^+ \subset R \subset X(T)$, and the set of fundamental weights (which is the dual basis in $X(T)$) by $\{\omega_i \ \vert \ 1 \leq i \leq m\} \subset X(T) \otimes_{\Z} \R$.
 
The parabolic subgroup $P$ of $\PSO(V)$ we are interested in is the one whose Lie algebra $\mathfrak p$ is generated by all of the $e_i$ together with $f_2,\dotsc, f_{m}$, leaving out $f_1$. Let $x_i(a):=\exp(a e_i)$ and $y_i(a):=\exp(a f_i)$. The Weyl group $W$ of $\PSO(V)$ is generated by simple reflections $s_i$ for which we choose representatives
\begin{equation}\label{e:si}
	\dot s_i=y_i(-1)x_i(1)y_i(-1).
\end{equation}
We let $W_P$ denote the parabolic subgroup of the Weyl group $W$, namely $W_P=\langle s_2,\dotsc, s_m\rangle$. The length of a Weyl group element $w$ is denoted by $\ell(w)$. The longest element in $W_P$ is denoted by $w_P$. We also let $w_0$ be the longest element in $W$. Next $W^P$ is defined to be the set of minimal length coset representatives for $W/W_P$. The minimal length coset representative for $w_0$ is denoted by $w^P$.

We introduce the following notation for the elements of $W^P$. Namely, $W^P=\{e, w_1,\dotsc, w_{m-1},w_{m-1}', w_m,w_{m+1},\dotsc w_{2m-2}\}$, where
\begin{align*}
  	w_{k} = 	\begin{cases}
         			s_k s_{k-1} \dots s_1 & \text{if $1\le k \leq m-2$,} \\
         			s_{m-1} s_{m-2} \dots s_1 & \text{if $k = m-1$,} \\
         			s_{m} s_{m-1} s_{m-2} \dots s_1 & \text{if $k = m$,} \\
         			s_{2m-1-k} \dots s_{m-2} s_m s_{m-1} s_{m-2} \dots s_1 & \text{if $m+1 \leq k \leq 2m-2$.}
        		\end{cases}
\end{align*}
and $w_{m-1}' = s_{m} s_{m-2} \dots s_1$. 
 
For any $w\in W$ let $\dot{w}$ denote the representative of $w$ in $G$ obtained by setting $\dot w=\dot s_{i_1}\cdots \dot s_{i_r}$, where $w= s_{i_1}\cdots s_{i_r}$ is a reduced expression and $\dot s_i$ is as in \eqref{e:si}. Each $\dot w_k\in \PSO(V)$ can be represented by a matrix $[w_k]\in \SO(V)$ such that 
\begin{equation}\label{e:basisV} 
	[w_k]\cdot v_{2m}=\begin{cases}v_{2m-k} & 1\le k < m-1,\\
v_{2m-k-1} & m-1< k\le 2m-2, 
 \end{cases}
\end{equation}
and $ [w_{m-1}']\cdot v_{2m}=v_{m}$ and $ [w_{m-1}]\cdot v_{2m}=v_{m+1}$.

\subsection{The dual quadric and its Pl\"ucker coordinates}\label{s:dualquadric}

Consider the homogeneous space $\X_{2m-2}= P\backslash \PSO(V)$. It is canonically identified with the isotropic Grassmannian of lines in $V^*$, when this Grassmannian is viewed as a homogeneous space via the action of $\PSO(V)$ from the right. Moreover the isotropic Grassmannian of lines is also a $(2m-2)$-dimensional quadric $\X_{2m-2} =:\check \Q_{2m-2}$, now in $\mathbb P(V^*)$. So in this  case, the varieties $X$ and $\X$ are (non-canonically) isomorphic. The reason for this isomorphism of varieties is that the group $G^\vee$ is of simply-laced type. However Lie-theoretically we still think of $X_{2m-2}$ and $\X_{2m-2}$ as being very different homogeneous spaces, with $X_{2m-2}=\Spin_{2m}(\C)/P^\vee$ and $\X_{2m-2}=P\backslash \PSO_{2m}(\C)$.

\begin{defn}[Pl\"ucker coordinates]\label{def:plucker} 
	The Pl\"ucker coordinates for $\X_{2m-2}=P\backslash \PSO(V)$ are the homogeneous coordinates coming from the embedding of $\X_{2m-2}$ into $\mathbb P(V^*)$ as the (right) $G$-orbit of the line $\C v_{2m}^*$:
	\[
		\X_{2m-2}=P\backslash \PSO(V)\to \mathbb P(V^*):  Pg\mapsto (\C v_{2m}^*)\cdot g.
	\]
	We think of the Pl\"ucker coordinates as corresponding to the elements of $W^P$. Let $v_{\omega_i}^-$ (respectively $v_{\omega_i}^+$) denote lowest and highest weight vectors in the highest weight representation $V_{\omega_i}$. Then the Pl\"ucker coordinates may be defined by:
	\begin{align*}
		 \p_0(g) & = \langle v_{2m}^*\cdot [g],  v_{2m}\rangle, \\
 		 \p_k(g) & = \langle v_{2m}^*\cdot [g], [w_k] \cdot v_{2m} \rangle \text{ for }1 \leq k \leq 2m-2,\\
 		 \p_{m-1}'(g) &= \langle v_{2m}^*\cdot [g], [w_{m-1}'] \cdot v_{2m} \rangle,
	\end{align*}
 	where $[g]\in \SO(V)$ is any fixed matrix representing $g\in \PSO(V)$. The homogeneous coordinates of $Pg$ are then given by 
 	\[
		(\p_0(g):\dotsc:\p_{m-2}(g):\p_{m-1}(g):\p_{m-1}'(g):\p_m(g):\dotsc\ :\p_{2m-2}(g)).
	\]
	These are simply the bottom row entries of $[g]$ read from right to left, keeping in mind \eqref{e:basisV}. 
\end{defn}
 
We may now write down the equation of the quadric $\X_{2m-2}$ in terms of Pl\"ucker coordinates:
\begin{equation}\label{e:equationQuadric}
	\p_{m-1} \p_{m-1}' - \p_{m-2} \p_m + \p_{m-3} \p_{m+1} - \dots + (-1)^{m-1} \p_0 \p_{2m-2} = 0.
\end{equation}
 
We note that as in the case of the odd quadric these Pl\"ucker coordinates are to be thought of as $B$-model incarnations of the Schubert classes of $Q_{2m-2}$. Namely, recall that $H^*(Q_{2m-2},\C)$ has a Schubert basis $\{\sigma_w\}$ indexed by $W^P$. We will use the notation $\sigma_i=\sigma_{w_i}$, $\sigma_{m-1}'=\sigma_{w_{m-1}'}$, and $\sigma_0=\sigma_e$, where the $w_i$ are defined in Section~\ref{s:notationeven}. As a special case of the geometric Satake correspondence \cite{lusztig,Gin:GS,MV} we have that the (defining) projective representation $V$ of $PSO_{2m}(V)$ is identified with the cohomology of $Q_{2m-2}$,
\[
	V=H^*(Q_{2m-2},\C),
\]
and the standard basis $v_i$ agrees with the Schubert basis via $v_{2m}=\sigma_0$ and
\begin{equation}\label{e:SchubertGS}
	[w_{i}]\cdot v_{2m} = \sigma_i, \quad [w_{m-1}']\cdot v_{2m} = \sigma_{m-1}'.
\end{equation}
The Schubert classes $\sigma_w$ are in this way naturally identified with the  Pl\"ucker coordinates.

\subsection{The superpotential for $Q_{2m-2}$ on a dual quadric}

In this section we state our theorem describing a superpotential for $Q_{2m-2}$ in terms of Pl\"ucker coordinates on the dual quadric $\X_{2m-2}=\check Q_{2m-2}$. Consider
\begin{equation}\label{e:domain-even}
	\XXcan=\XX_{2m-2}:= \X \setminus D,
\end{equation}
where $D:=D_0+D_1+\dotsc + D_{m-2}+ D_{m-1} + D_{m-1}'$, the $D_i$ being given by
\begin{align*}
 	&D_0:= \left\{ \p_0 = 0 \right\}, \\
 	&D_\ell:= \left\{ \sum_{k=0}^\ell (-1)^k \p_{\ell-k} \p_{2m-2-\ell+k} = 0 \right\} \text{ for $1\leq \ell \leq m-3$,}\\
 	&D_{m-2}:= \left\{ \p_{2m-2} = 0 \right\}, \\
 	&D_{m-1}:= \left\{ \p_{m-1} = 0 \right\}, \\
 	&D_{m-1}':= \left\{ \p_{m-1}' = 0 \right\}.
\end{align*}
The divisor $D$ is an anticanonical divisor in $\X$ (see \cite[Lemma 5.4]{KLS}). 
For simplicity, we will define
\begin{equation}\label{eq:delta2}
	\delta_{\ell}  =  \sum_{k=0}^{\ell} (-1)^k \p_{\ell-k} \p_{N-\ell+k} \text{ for }1 \leq \ell \leq m-3.
\end{equation}
(For even quadrics, $N = 2m-2$.)

As in the odd case, we have a unique up to scalar $(2m-2)$-form $\omegaCan$ which is regular on $\XXcan$ and has logarithmic poles along $D$. For all $1 \leq j \leq m-2$, take $r_j \in \{ \p_j, \p_{2m-2-j} \}$. Setting $\p_0=1$, the restriction of $\omegaCan$ to the torus $\{ r_j \neq 0 \mid 1 \leq j \leq m-2 \}$ inside $\XXcan$ is given by
\begin{equation}\label{e:omegaCanEven}
	\omegaCan = \frac{\bigwedge_{ 1 \leq j \leq m-2} r_j \wedge \bigwedge_{ 1 \leq \ell \leq m-3} \delta_\ell \wedge \p_{2m-2} \wedge \p_{m-1} \wedge \p_{m-1}'}{\delta_1 \dots \delta_{m-3} \p_{2m-1} \p_{m-1} \p_{m-1}'}.
\end{equation}

Our first result is the following theorem.
\begin{theorem}\label{t:even}
	The Lie-theoretic LG model $(\RR,\F)$ for $\Q_{2m-2}=\mathrm{Spin}_{2m}/P^\vee$ from \cite{rietsch} is isomorphic to the canonical LG model $(\XX_{2m-2},\W)$, where $\W:\XX_{2m-2}\to \C$ is defined by 
	\begin{equation}\label{eq:Weven}
    	\W = \frac{\p_{1}}{\p_0} + \sum_{\ell=1}^{m-3} \frac{\p_{\ell+1} \p_{2m-2-\ell}}{\delta_\ell} + \frac{\p_m}{\p_{m-1}} + \frac{\p_m}{\p_{m-1}'} + q\frac{\p_{1}}{\p_{2m-2}}.
	\end{equation}
\end{theorem}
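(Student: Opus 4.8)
The plan is to mirror the strategy of the odd-quadric case (Theorem~\ref{t:odd-quadric}, proved in~\cite{PR2}), adapting each step to the $D_m$ setting. The core object is the explicit parametrization of a dense torus in the Lie-theoretic domain $\RR\subset \PSO_{2m}(\C)/B$, together with the geometric Satake identification $V = H^*(\Q_{2m-2},\C)$ recorded in Section~\ref{s:dualquadric}, which matches the standard basis $v_i$ of $V$ with the Schubert classes and hence with the Pl\"ucker coordinates $\p_i$, $\p_{m-1}'$. First I would write down a slice/parametrization of $\RR$ of the form $g = \dot w_P^{-1}\, u_-\, \dot w_0$ (or the analogous Marsh--Rietsch-type presentation) in terms of one-parameter subgroups, so that the Lie-theoretic superpotential $\F = \sum_i \langle \text{(something)}\cdot e_i\rangle + q\,(\text{last term})$ becomes a concrete rational expression in those parameters. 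Then I would compute the Pl\"ucker coordinates $\p_k(g), \p_{m-1}'(g)$ along this slice as the bottom-row entries of $[g]$ (per Definition~\ref{def:plucker}); these turn out to be polynomials in the parameters, and the map $g \mapsto (\p_0:\dots:\p_{2m-2})$ lands in the dual quadric $\check\Q_{2m-2}$ cut out by~\eqref{e:equationQuadric}, away from the divisor $D$ of~\eqref{e:domain-even}.

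The next step is to invert this: solve for the torus parameters in terms of the $\p_k$ (and $\delta_\ell$), thereby exhibiting an explicit birational — in fact biregular, once one is careful about which locus is being removed — map $\RR \to \XXcan$. The natural intermediate coordinates are $\p_0 = 1$, the ratios $\p_k/\p_{k-1}$ for small $k$, the quantities $\delta_\ell$ for $1\le\ell\le m-3$, and then $\p_{m-1}$, $\p_{m-1}'$, $\p_{2m-2}$; these are exactly the functions appearing in the denominators of $\W$ in~\eqref{eq:Weven} and in the volume form~\eqref{e:omegaCanEven}, which is the structural hint for how the change of variables must be organized. One then checks that under this identification the Lie-theoretic superpotential $\F$ transforms into~\eqref{eq:Weven}: the terms $\p_1/\p_0$, the middle sum $\sum_{\ell=1}^{m-3}\p_{\ell+1}\p_{2m-2-\ell}/\delta_\ell$, and $q\,\p_1/\p_{2m-2}$ arise exactly as in the odd case, while the two ``forked'' summands $\p_m/\p_{m-1} + \p_m/\p_{m-1}'$ are the new feature coming from the two simple roots $\alpha_{m-1},\alpha_m$ at the branch of the $D_m$ Dynkin diagram — these replace the single odd-case term $\p_m^2/\delta_{m-1}$, consistently with the identity $\delta_{m-1} = \p_{m-1}\p_{m-1}'$ that the quadric equation~\eqref{e:equationQuadric} forces. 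I would verify the forked part by a direct local computation with the two Chevalley generators $e_{m-1}, e_m$ and the relation $e_m = E_{m-1,m+1}+E_{m,m+2}$.

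The main obstacle, as that last sentence suggests, is precisely the fork: in types $A$ and in odd quadrics the relevant flag-variety slice is ``linear'' enough that the Pl\"ucker coordinates are monomials or simple binomials in the Lusztig/Chevalley parameters, but at the trivalent node of $D_m$ the two spinor-adjacent coordinates $\p_{m-1},\p_{m-1}'$ interact, and one must check both that they are genuinely independent functions on the slice (so the map is dominant onto the $(2m-1)$-dimensional ambient projective space, cutting out exactly the quadric) and that the superpotential splits as $\p_m/\p_{m-1}+\p_m/\p_{m-1}'$ rather than some other combination. A clean way to control this is to first establish the odd-quadric-style identities $\delta_\ell + \delta_{\ell-1} = \p_\ell\,\p_{2m-1-\ell}$ (with the convention $\delta_0=\p_0\p_{2m-2}$) on the image, together with $\p_{m-1}\p_{m-1}' = \delta_{m-2}$ coming from~\eqref{e:equationQuadric}, and then use these purely algebraic relations to rewrite $\F$ without ever fully inverting the parametrization — reducing the problem to matching two rational functions on a variety whose coordinate ring one understands via the (cluster) relations among the $\p_i$. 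Finally I would confirm that the biregular locus is exactly $\XXcan = \X\setminus D$: the divisor $D_0,\dots,D_{m-2}$ matches the odd-case vanishing loci of $\p_0$, the $\delta_\ell$, and $\p_{2m-2}$, while the two extra components $D_{m-1},D_{m-1}'$ are forced by the two new denominators $\p_{m-1},\p_{m-1}'$, and the identification of volume forms then follows by comparing~\eqref{e:omegaCanEven} with the known expression for $\omegaCan$ on $\RR$ from~\cite{rietsch}, exactly as in the odd case.
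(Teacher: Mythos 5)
Your plan for the superpotential comparison is essentially the paper's: restrict to the dense torus of factorized elements $\bar u_2 = y_1(a_1)\cdots y_{m-2}(a_{m-2})y_m(d)y_{m-1}(c)y_{m-2}(b_{m-2})\cdots y_1(b_1)$ as in \eqref{e:u2barfactb}, compute the Pl\"ucker coordinates there (Lemma~\ref{l:bislemma}), compute $\sum e_i^*(u_1)+\sum f_i^*(\bar u_2)$ in the same coordinates (Lemma~\ref{p:eis}, Proposition~\ref{p:W1}), and match term by term. Your reading of the trivalent node is also correct in substance: on the torus one finds $\p_m/\p_{m-1}=d$ and $\p_m/\p_{m-1}'=c$, which are exactly the contributions of $f_m^*$ and $f_{m-1}^*$, and the relation $\p_{m-1}\p_{m-1}'=\delta_{m-2}$ forced by \eqref{e:equationQuadric} is precisely the exchange relation \eqref{e:mutations} for $i=m-2$. (Two small slips: the telescoping identity in the even case is $\delta_\ell+\delta_{\ell-1}=\p_\ell\,\p_{2m-2-\ell}$, not $\p_\ell\,\p_{2m-1-\ell}$, and the quadric relation identifies $\p_{m-1}\p_{m-1}'$ with $\delta_{m-2}$, not $\delta_{m-1}$.)

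The genuine gap is in the step you describe as ``invert this \dots\ exhibiting an explicit birational --- in fact biregular, once one is careful about which locus is being removed --- map $\RR\to\XXcan$.'' Inverting the torus parametrization only produces an isomorphism between two dense tori; it does not show that the map extends to an isomorphism of the affine varieties $\RR$ and $\XXcan=\X\setminus D$, which is the actual content of Proposition~\ref{p:iso-even} and the hard part of Theorem~\ref{t:even}. The difficulty is not on the $\XXcan$ side (whose coordinate ring is by construction generated by the Pl\"ucker coordinates together with the inverses of $\p_0,\p_{m-1},\p_{m-1}',\p_{2m-2}$ and the $\delta_\ell$), but on the Lie-theoretic side: one must know a presentation of $\C[\RR]$ precise enough to see that it is generated by exactly the pullbacks of those functions and nothing more. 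The paper supplies this in two steps that your proposal does not: first the Berenstein--Zelevinsky twist \cite{BZ} is used (Lemma~\ref{l:theta}) to identify $Z_d\cong\RR$ with the unipotent cell $U_-^P=U_-\cap B_+(\dot w^P)^{-1}B_+$, and then the Gei\ss--Leclerc--Schr\"oer presentation of $\C[U_-^P]$ by generalized minors, Pfaffians and their inverses (Theorem~\ref{t:GLS}, Corollary~\ref{cor:pres}) is matched, via the explicit minor identities of Lemma~\ref{l:iso}, with the Pl\"ucker coordinates and the $\delta_\ell$ cutting out $D$. Without an input of this kind, ``being careful about which locus is removed'' does not rule out that the boundary divisors of $\RR$ fail to coincide with the components of $D$, or that $\C[\RR]$ contains functions not regular on $\XXcan$; so as written the proposal establishes an isomorphism of LG models only on tori, not on the full domains claimed by the theorem.
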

This isomorphism is defined in Section \ref{s:richardson-even}. Before we begin the proof we need to recall the definition of the Lie-theoretic LG model from \cite{rietsch}.

\subsection{The Lie-theoretic LG model for $Q_{2m-2}$}\label{s:richardson}

Following \cite{rietsch} consider the (open) Richardson variety $\RR:= R_{w_P,w_0} \subset G / B_-$, namely 
\[ 
	\RR:=R_{w_P,w_0} = (B_+\dot w_P B_-  \cap B_-\dot w_0 B_- )/ B_-.
\]
This Richardson variety $\RR$ is irreducible of dimension $2m-2$, and its closure is the Schubert variety $\overline{B_+\dot w_PB_-/B_-}$. Let $T^{W_P}$ be the $W_P$-fixed  part of the maximal torus $T$. Note that since we are in the setting of Section~\ref{s:notationeven} we have that $T^{W_P}\cong \C^*$ with isomorphism given by $\alpha_1$. The inverse isomorphism is $\omega_1^\vee:\C^*\to T^{W_P}$. We fix a $d\in T^{W_P}$. Then one can define
\begin{equation}\label{e:Zd}
	Z_d:=B_-\dot w_0\cap U_+ d\dot w_P U_- \subset G,  
\end{equation}
and the map 
\begin{equation}\label{e:piR}
	\pi_R: Z_d\to \RR: g \mapsto g B_-.
\end{equation}
is an isomorphism from $Z_d$ to the open Richardson variety \cite[Section 4.1]{rietsch}.

Let $q$ be the non-vanishing coordinate on the $1$-dimensional torus $T^{W_P}$ given by $\alpha_1:T^{W_P}\to\C^*$.  The mirror LG model is a regular function on $\RR$ depending also on $q$, and hence
a regular function on  $\RR\times T^{W_P}$. It is defined as follows~\cite{rietsch}:
\begin{equation}\label{e:F}
	\mathcal F: (u_1\dot w_P B_-,d)\ \mapsto\ g=u_1 d \dot w_P \bar u_2\in Z_d\ \mapsto\  \sum e_i^*(u_1)+\sum f_i^*(\bar u_2),  
\end{equation}
where $u_1 \in U_+, \bar u_2 \in U_-$, and where $\bar u_2$ is determined by $u_1$ and the property that 
$u_1 d \dot w_P \bar u_2\in Z_d$.

The corresponding map from $\RR$, when the coordinate $q$ is fixed, is denoted
\[
	\F: \RR \to \C: u_1\dot w_P B_- \mapsto \mathcal F(u_1\dot w_P B_-,\omega_1^\vee(q)).
\]

\begin{remark}
Note that if $g=u_1 d \dot w_P  \bar u_2 \in Z_d$, then we have a simple identity concerning
the Pl\"ucker coordinates:
\[
	(\p_0(g):\dotsc: \p_{2m-2}(g)) = (\p_0(\bar u_2):\dotsc:\p_{2m-2}(\bar u_2)).
\]
\end{remark}

The remainder of Section~\ref{s:LGeven} will be devoted to proving Theorem~\ref{t:even}, which now says that there is an isomorphism $\XX_{2m-2}\overset \sim\to \RR$ under which $\W$ is identified with~$\F$.

\subsection{Isomorphism between $\XXcan$ and $\RR$}
\label{s:richardson-even}

To prove Theorem \ref{t:even}, the first step is to construct an isomorphism between $\XX_{2m-2}$ and the open Richardson variety $\RR$. We define the following maps:
\begin{eqnarray*}
	\X=P\backslash G\ \overset{\pi_L}  \longleftarrow   &Z_d = B_-\dot w_0\cap U_+ d \dot w_P U_- &  \overset{\pi_R}{\longrightarrow}\  \RR,\\
 	P g\ \mapsfrom \ & g & \mapsto g B_-,
\end{eqnarray*}
given by taking left and right cosets, respectively. Note that $g$ is equal to $b_-\dot w_0$ in our previous notation and factorizes (a priori non-uniquely) as
\[
	g=u_1d\dot w_P\bar u_2.
\]
Moreover $\pi_R$ is an isomorphism, so we have $\pi:=\pi_L\circ\pi_R^{-1}:\RR\to \X_{2m-2}$. Our next goal is to prove:
\begin{prop}\label{p:iso-even}
	$\pi_L$ defines an isomorphism from $Z_d$ to $\XX_{2m-2}$. As a consequence, $\pi$ defines an isomorphism from $\RR$ to $\XX_{2m-2}$.
\end{prop}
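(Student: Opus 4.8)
The plan is to show that $\pi_L\colon Z_d\to\XX_{2m-2}$ is an isomorphism of affine varieties, from which the second assertion follows immediately since $\pi_R\colon Z_d\to\RR$ is already known to be an isomorphism (cited from \cite{rietsch}), so $\pi=\pi_L\circ\pi_R^{-1}$ will then be an isomorphism $\RR\to\XX_{2m-2}$.

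First I would make the target explicit. An element $g\in Z_d$ lies in $B_-\dot w_0$, so $g=b_-\dot w_0$ for a (unique) $b_-\in B_-$; taking the left coset $Pg\in P\backslash G$ and then the Pl\"ucker embedding from Definition~\ref{def:plucker}, the image is the point of $\check Q_{2m-2}\subset\P(V^*)$ whose homogeneous coordinates are the bottom row of $[g]$ read right to left. Thus I must (i) check $\pi_L(Z_d)$ lands in $\XXcan=\X\setminus D$, i.e. that the functions $\p_0$, the $\delta_\ell$ for $1\le\ell\le m-3$, $\p_{2m-2}$, $\p_{m-1}$, $\p_{m-1}'$ are all nonvanishing on $Z_d$; (ii) produce an inverse morphism. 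For (i), the standard approach is to use the factorization $g=u_1 d\dot w_P\bar u_2$ with $u_1\in U_+$, $\bar u_2\in U_-$: since $Pg=P d\dot w_P\bar u_2=P\dot w_P\bar u_2$ (as $d\in P$, $\dot w_P\in P$ up to the parabolic), the Pl\"ucker coordinates of $Pg$ equal those of $\dot w_P\bar u_2$, which is the content of the Remark just above the Proposition. Then non-vanishing of these particular Pl\"ucker-type coordinates on $U_-$-translates of $\dot w_P$ is exactly the statement that the Richardson variety $R_{w_P,w_0}$ is cut out of its Schubert-variety closure by removing the vanishing loci of these functions — this is where I would invoke \cite[Lemma~5.4]{KLS} (already cited for $D$ being anticanonical) together with the description of $\RR$ as $R_{w_P,w_0}$; concretely, the complement of $D$ corresponds under $\pi_R$ to the open Richardson cell, and each $D_i$ is the pullback of a Schubert divisor.

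For the inverse, I would write down a section explicitly. Given a point $p=(\p_0:\dots:\p_{2m-2})\in\XXcan$, normalize $\p_0=1$; this determines a unique $\bar u_2\in U_-$ whose last row is $(\p_0,\dots,\p_{2m-2})$ in the appropriate order — uniqueness because $U_-$ acts simply transitively on the big cell and the bottom-row map is a coordinate chart there, which one sees directly from the shape of the matrices $y_i(a)$ and the fact that $\check Q_{2m-2}=P\backslash\PSO(V)$ is covered by this cell away from $D$. Then $d\dot w_P\bar u_2$ lies in $U_+d\dot w_P U_-$ by construction, and one must check it also lies in $B_-\dot w_0$ precisely when $p\notin D$; equivalently, the Gaussian-type decomposition $d\dot w_P\bar u_2=u_1\cdot(\text{something in }B_-\dot w_0)$ exists and is regular in $p$. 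The regularity of $u_1$ (hence of $e_i^*(u_1)$, giving back $\W$) as a function of $p$ on $\XXcan$ is the crux. I expect the main obstacle to be establishing this two-sided regularity — that the map $p\mapsto g\in Z_d$ is a morphism, not merely a rational map — and in particular identifying the non-vanishing locus of the denominators appearing (the $\delta_\ell$, $\p_{m-1}$, $\p_{m-1}'$, $\p_{2m-2}$) with the complement $D$ on the nose; this is precisely where the explicit combinatorics of the $D_m$ Weyl group, the reduced words for $w_P$ and $w_0$, and the shape of $Q$ enter, and it is the step I would allocate the most care to. Once both $\pi_L$ and its inverse are regular morphisms and mutually inverse on points, the Proposition follows; the identification of $\W$ with $\F$ under this isomorphism (the remaining content of Theorem~\ref{t:even}) is then a separate matrix computation carried out in Section~\ref{s:richardson-even}.
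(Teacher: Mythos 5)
There are two genuine gaps, and they sit exactly at the two load-bearing points of the argument. First, for showing $\pi_L(Z_d)\subset\XX_{2m-2}$ rather than merely $\subset\X$, you appeal to the idea that the open Richardson variety is cut out of its closure by the non-vanishing of $\p_0,\delta_1,\dots,\delta_{m-3},\p_{m-1},\p_{m-1}',\p_{2m-2}$, citing \cite[Lemma~5.4]{KLS}. But that lemma (as used in the paper) only says $D$ is anticanonical; the identification of the complement of the Richardson boundary with $\X\setminus D$ under $\pi$ is essentially the statement being proved, so your step (i) is circular as written. Note also that the $\delta_\ell$ are quadratic in the Pl\"ucker coordinates, not Schubert divisors of $\X$ in any obvious sense. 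The paper's mechanism here is quite different: it first constructs an isomorphism $\theta:Z_d\to U_-^P$ onto the unipotent cell $U_-^P=U_-\cap B_+(\dot w^P)^{-1}B_+$ via the Berenstein--Zelevinsky twist (Lemma~\ref{l:theta}), then invokes the Gei\ss--Leclerc--Schr\"oer presentation of $\C[U_-^P]$ by generalized minors, and proves by explicit minor expansions (Lemma~\ref{l:iso}) that the invertible generators of $\C[U_-^P]$ are precisely the pullbacks of $\p_{2m-2},\p_{m-1},\p_{m-1}'$ and the $\delta_\ell$. Non-vanishing is then automatic, with no appeal to Richardson boundary structure.

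Second, your inverse construction does not go through as stated. An element $\bar u_2\in U_-$ is \emph{not} determined by its bottom row: $U_-$ has dimension $m(m-1)$ while the bottom row carries only $2m-1$ parameters, so "$U_-$ acts simply transitively on the big cell" is false for $P\backslash G$, and the $\bar u_2$ occurring in the factorization $g=u_1d\dot w_P\bar u_2$ lives in the $(2m-2)$-dimensional cell $U_-^P$, not in the unipotent radical of the opposite parabolic. You then explicitly defer the crux --- regularity of the inverse and the identification of the denominator locus with $D$ "on the nose" --- as "the step I would allocate the most care to," which is precisely the content that needs proving. The paper sidesteps pointwise construction of an inverse entirely: it shows $\phi^*:\C[\XX_{2m-2}]\to\C[U_-^P]$ is injective (dominance) and surjective (every GLS generator of $\C[U_-^P]$, including the Pfaffians and inverted minors, is exhibited as a pullback in Lemma~\ref{l:iso}), so $\phi$ is an isomorphism of affine varieties. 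Without the twist isomorphism and the GLS presentation, or a substitute for them, your outline does not yet constitute a proof.
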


Our proof uses a presentation of the coordinate ring of the unipotent cell 
\begin{equation}\label{e:unipotent-cell}
	U_-^P:= U_- \cap B_+ (\dot w^P)^{-1} B_+
\end{equation}
due to \cite{GLS}. The strategy of the proof of Proposition \ref{p:iso-even} is as follows.

\begin{itemize}
	\item  The first step is to show that the natural map $\pi_L : Z_d\to \X$ factorizes as $\phi\circ\theta$ where $\phi: U_-^P\to \X$ with $\phi(\bar u)=P\bar u$ and $\theta: Z_d\to U_-^P$ is an isomorphism which will be constructed in Lemma~\ref{l:theta}. 
	\item We then use the presentation of the coordinate ring of $U^P_-$ to show that the image of the map $\phi$ lands in $\XX_{2m-2}$ and not just $\X_{2m-2}$.  That is, the Pl\"ucker coordinates
$p_0$, $p_{2m-2}$, $p_{m-1}$, $p'_{m-1}$ and the functions $\delta_{\ell}$ (defined in \eqref{eq:delta}) do not vanish. Finally, we show that $\phi$ is an isomorphism from $U_-^P$ to $\XX_{2m-2}$. The main step is to find a pre-image for each of the functions generating $\C[U_-^P]$.  
\end{itemize}

\begin{lemma}\label{l:theta} 
	There exists an isomorphism $\theta:Z_d\to U^P_-$ such that for $b\dot w_0\in Z_d$, 
	\begin{equation}\label{e:Pu}
		Pb\dot w_0=P\bar u_2,
	\end{equation}
	where $\bar u_2:=\theta(b\dot w_0)$.
\end{lemma}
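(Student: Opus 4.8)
\textbf{Proof proposal for Lemma~\ref{l:theta}.}

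The plan is to exploit the factorization $g = u_1 d \dot w_P \bar u_2$ that every $g \in Z_d$ admits, and to show that the resulting assignment $b\dot w_0 \mapsto \bar u_2$ is well-defined (independent of the factorization), lands in $U^P_-$, and is invertible. First I would address uniqueness of the factorization: given $g = u_1 d \dot w_P \bar u_2 = u_1' d \dot w_P \bar u_2'$ with $u_i \in U_+$ and $\bar u_i \in U_-$, one gets $\dot w_P^{-1} d^{-1} (u_1')^{-1} u_1 d \dot w_P = \bar u_2' \bar u_2^{-1}$. The left side, conjugation of a unipotent upper-triangular element by $d\dot w_P$, lies in $U_+ \cap \dot w_P^{-1} B_+ \dot w_P$ (roots sent to roots by $w_P$), while the right side is in $U_-$; but since $w_P \in W_P$ stabilizes the relevant root subspaces, the only way a conjugate of an element of $U_+$ by $\dot w_P$ can be lower-triangular is if it is trivial. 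A cleaner route: the ambiguity in $\bar u_2$ is governed by $\dot w_P^{-1}(U_+ \cap \dot w_P U_- \dot w_P^{-1})\dot w_P$, and since $w_P$ is the longest element of $W_P$ acting on a Levi that is complementary to the relevant unipotent directions, this group is trivial. So $\bar u_2$ is uniquely determined by $g$, hence $\theta(b\dot w_0) := \bar u_2$ is a well-defined map, and \eqref{e:Pu} is immediate because $u_1 d \dot w_P \in P$ (recall $\mathfrak p$ contains all $e_i$ and $f_2,\dots,f_m$, so $d, \dot w_P, U_+ \subset P$).

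Next I would show the image lies in $U^P_- = U_- \cap B_+(\dot w^P)^{-1}B_+$. Here the key is the description $Z_d = B_-\dot w_0 \cap U_+ d\dot w_P U_-$ together with the fact that $w_0 = w_P w^P$ (a length-additive factorization, since $w^P \in W^P$ is the minimal coset representative of $w_0 W_P$, so $\ell(w_0) = \ell(w_P) + \ell(w^P)$). Writing $g = b_-\dot w_0$ and $g = u_1 d\dot w_P \bar u_2$, one compares: $\bar u_2 = \dot w_P^{-1} d^{-1} u_1^{-1} b_- \dot w_P \dot w^P$. Tracking which Bruhat cell $\bar u_2$ falls in — using that $b_- \in B_-$, $u_1 \in U_+$, and the product $\dot w_P^{-1}(\cdots)\dot w_P$ lands in $B_+$ up to the $U_-$-part — one finds $\bar u_2 \in B_+ (\dot w^P)^{-1} B_+$, which is exactly the condition defining $U^P_-$. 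This is essentially the computation underlying \cite[Section 4.1]{rietsch}, where $\pi_R: Z_d \to \RR$ is shown to be an isomorphism onto the open Richardson variety; I would cite that and extract the $U^P_-$ membership from the same Bruhat-decomposition bookkeeping.

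Finally, for the isomorphism claim I would construct the inverse. Given $\bar u \in U^P_-$, the condition $\bar u \in B_+(\dot w^P)^{-1}B_+$ lets us write $\dot w_P \bar u$ in the big Bruhat cell relative to $w_0$, so $d\dot w_P \bar u$ has a unique $U_+$-factor on the left bringing it into $B_-\dot w_0$; explicitly, there is a unique $u_1 \in U_+$ (depending on $\bar u$ and $d$) with $u_1 d\dot w_P \bar u \in B_-\dot w_0$, and this defines $\theta^{-1}(\bar u) = u_1 d \dot w_P \bar u$. That this is a morphism in both directions follows because the Bruhat-cell factorizations are given by rational (indeed regular, on the relevant open cells) formulas — this is the standard fact that unipotent factorization in a big cell is an isomorphism of varieties. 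The main obstacle I anticipate is the second step: pinning down precisely that $\bar u_2$ lands in the \emph{specific} cell $B_+(\dot w^P)^{-1}B_+$ rather than a larger union of cells, which requires careful use of $\ell(w_0) = \ell(w_P)+\ell(w^P)$ and the fact that $\dot w_P$ conjugation preserves the factorization type. Everything else reduces to bookkeeping with Bruhat decompositions and the observation that $d, \dot w_P \in P$.
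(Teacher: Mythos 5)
Your proposal has a genuine gap, and it sits exactly where the paper deploys its one nontrivial tool. The claim in your first step that the factorization $g=u_1 d\dot w_P\bar u_2$ is unique is false: the ambiguity group you identify, $\dot w_P^{-1}(U_+\cap \dot w_P U_-\dot w_P^{-1})\dot w_P$, is not trivial. Since $w_P$ is the longest element of $W_P$, it sends exactly the positive roots of the Levi $L_P$ to negative roots, so this group equals $U_-\cap L_P$, the negative unipotent part of the Levi (of type $D_{m-1}$ here), which is nontrivial for $m\ge 3$. The paper itself flags this by saying $g$ ``factorizes (a priori non-uniquely).'' What \emph{is} well defined is the coset $P\bar u_2$, precisely because $U_-\cap L_P\subset P$; but the element $\bar u_2$ is only determined up to left multiplication by $U_-\cap L_P$, and different representatives land in different Bruhat cells $B_+wB_+$. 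So your map $\theta$ is not well defined as stated, and your second step (showing ``the'' $\bar u_2$ lies in $U_-^P=U_-\cap B_+(\dot w^P)^{-1}B_+$) cannot go through for an arbitrary representative.

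The missing idea is the Berenstein--Zelevinsky twist. The paper defines $\theta$ not by extracting a factor but as the composition $\varepsilon_{w^P}\circ\ell_{\dot w_0^{-1}}\circ\mu$, where $\mu:Z_d\to U_-\dot w_0\cap B_+\dot w_P U_-$ strips the torus part of $b_-$, $\ell_{\dot w_0^{-1}}$ carries this into $U_+\cap B_-\dot w^P B_-$, and $\varepsilon_{w^P}:U_+\cap B_-\dot w^P B_-\to U_-\cap B_+(\dot w^P)^{-1}B_+$ is the inverse BZ twist of \cite[Theorem~1.2]{BZ}. This single citation does all three jobs you postponed to ``Bruhat bookkeeping'': it selects the distinguished representative of the coset lying in $U_-^P$, it shows such a representative exists and is unique, and it gives biregularity in both directions (so no separate construction of $\theta^{-1}$ is needed). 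The identity $Pb\dot w_0=P\bar u_2$ then follows from the defining property $B_-\dot w y=B_-x$ of the twist, rather than from the (correct but insufficient) observation that $u_1 d\dot w_P\in P$. Without invoking this twist, or proving an equivalent statement from scratch, your steps two and three remain assertions rather than proofs.
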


To prove Lemma~\ref{l:theta} we use an isomorphism  introduced by Berenstein and Zelevinsky in \cite{BZ} (and joint with Fomin in type $A$ \cite{BFZ}) which is sometimes called the BZ twist (or BFZ twist).  
\begin{theorem}\cite[Theorem~1.2]{BZ} 
	Let $y\in U_- \cap B_+ \dot w^{-1} B_+$. There exists a unique $x\in U_+ \cap B_- \dot w B_-$ such that $U_+\cap B_- \dot w y=\{x\}$. The resulting map $\tilde\eta_w:U_- \cap B_+ \dot w^{-1} B_+\to U_+ \cap B_- \dot w B_-$ sending $y$ to $x$ is an isomorphism. In particular we have an inverse isomorphism 
	\[
		\varepsilon_w:U_+ \cap B_- \dot w B_-\to U_- \cap B_+ \dot w^{-1} B_+.
	\]
\end{theorem}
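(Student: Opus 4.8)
The plan is to construct $\tilde\eta_w$ by hand from the Gaussian (``$LU$'') decompositions attached to the two opposite big cells $U_-TU_+$ and $U_+TU_-$ of $G$, and then to obtain the inverse $\varepsilon_w$ by running the identical construction with the roles of $B_+$ and $B_-$ interchanged and $w$ replaced by $w^{-1}$. Throughout I would freely use the standard Bruhat factorizations $B_+\dot v B_+=(U_+\cap\dot v U_-\dot v^{-1})\,\dot v\, B_+$ and $B_-\dot v B_-=(U_-\cap\dot v U_+\dot v^{-1})\,\dot v\, B_-$, the decomposition $U_+B_-=U_+TU_-$ (and its mirror $U_-B_+=U_-TU_+$) of the big cell, and the relations $B_+\cap U_-=B_-\cap U_+=\{e\}$.

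First I would establish existence together with the sharper statement that the resulting $x$ lies in the \emph{smaller} cell $U_+\cap B_-\dot w B_-$, not merely in $U_+$. Given $y\in U_-\cap B_+\dot w^{-1}B_+$, write $y=u\,\dot w^{-1}\,b$ uniquely with $u\in U_+\cap\dot w^{-1}U_-\dot w$ and $b\in B_+$. Then $\dot w y=(\dot w u\dot w^{-1})\,b$ and $\dot w u\dot w^{-1}\in U_-\cap\dot w U_+\dot w^{-1}\subset U_-$, so $\dot w y\in U_-B_+$, i.e. $\dot w y$ lies in the open cell $U_-TU_+$. Decomposing $\dot w y=g_-g_0g_+$ accordingly and setting $x:=g_+\in U_+$, the identity $x=(g_-g_0)^{-1}\dot w y$ with $(g_-g_0)^{-1}\in B_-$ gives $x\in U_+\cap B_-\dot w y$; if $x'=\beta\dot w y$ is another element of $U_+\cap B_-\dot w y$ with $\beta\in B_-$, then $x'x^{-1}=\beta g_-g_0\in B_-\cap U_+=\{e\}$, so $U_+\cap B_-\dot w y=\{x\}$. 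Since moreover $y\in U_-\subset B_-$, the same identity shows $x\in B_-\dot w B_-$, hence $x\in U_+\cap B_-\dot w B_-$. This defines $\tilde\eta_w(y):=x$, and $\tilde\eta_w$ is a morphism because it is the composite of $y\mapsto\dot w y$ (which lands in $U_-TU_+$ by the above) with the regular projection $U_-TU_+\cong U_-\times T\times U_+\to U_+$.

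Next I would carry out the mirror-image construction: for $x\in U_+\cap B_-\dot w B_-$ write $x=v\,\dot w\,b_-$ with $v\in U_-\cap\dot w U_+\dot w^{-1}$ and $b_-\in B_-$, so that $\dot w^{-1}x=(\dot w^{-1}v\dot w)\,b_-\in U_+B_-=U_+TU_-$; decomposing $\dot w^{-1}x=h_+h_0h_-$ and setting $\varepsilon_w(x):=h_-$, the same bookkeeping shows $\varepsilon_w(x)\in U_-\cap B_+\dot w^{-1}B_+$ and that $\varepsilon_w$ is a morphism. It then remains to verify $\varepsilon_w\circ\tilde\eta_w=\mathrm{id}$ (the other composite being symmetric). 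Tracking the construction: if $y=u\dot w^{-1}b$ with $b=\tau\beta$ ($\tau\in T$, $\beta\in U_+$), then from $\dot w y=(\dot w u\dot w^{-1})\,\tau\,\beta$ one reads off $\tilde\eta_w(y)=\beta$; and $\dot w^{-1}\beta=(\dot w^{-1}\tau^{-1}\dot w)\,u^{-1}\,y$, which, after conjugating the $U_+$-factor $u^{-1}$ past the torus factor, has $U_+TU_-$-form with $U_-$-component exactly $y$, so $\varepsilon_w(\beta)=y$. This shows $\tilde\eta_w$ is an isomorphism with inverse $\varepsilon_w$, which is the final assertion of the theorem.

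I expect the one genuinely delicate point to be keeping the two big cells $U_-TU_+$ and $U_+TU_-$ — hence the two flavours of Gaussian decomposition — cleanly separated: $\tilde\eta_w$ is built from the first and $\varepsilon_w$ from the second, and it is precisely their interplay (through the torus conjugation in the last step) that makes the composite collapse to the identity. Once the orders of the unipotent factors are pinned down correctly, everything else is routine manipulation with the Bruhat and Gaussian decompositions and the trivial intersections $B_\pm\cap U_\mp=\{e\}$.
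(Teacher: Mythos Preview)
The paper does not supply its own proof of this theorem: it is quoted verbatim as \cite[Theorem~1.2]{BZ} and used as a black box in the proof of Lemma~\ref{l:theta}. So there is no in-paper argument to compare against.

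That said, your argument is correct and is essentially the standard proof. The key points --- that the hypothesis $y\in B_+\dot w^{-1}B_+$ forces $\dot w y$ into the big cell $U_-TU_+$, that the $U_+$-component of the resulting Gaussian decomposition is the unique element of $U_+\cap B_-\dot w y$, and that this element automatically lies in $B_-\dot w B_-$ because $y\in U_-\subset B_-$ --- are all handled cleanly. The verification that $\varepsilon_w\circ\tilde\eta_w=\mathrm{id}$ via the explicit factorization $y=u\dot w^{-1}\tau\beta$ is correct (the torus element $\dot w^{-1}\tau^{-1}\dot w$ really is in $T$ since $\dot w\in N_G(T)$), and the other composite is indeed symmetric as you say. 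Your remark that regularity of $\tilde\eta_w$ follows from the regularity of the projection $U_-\times T\times U_+\to U_+$ on the open cell is the right way to see that these are morphisms of varieties and not merely set-theoretic bijections.
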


\begin{remark} 
	We note that the original twist map of Berenstein and Zelevinsky  is an automorphism $\eta_w: U_+ \cap B_- \dot w B_-\to U_+ \cap B_- \dot w B_-$. Our map $\tilde{\eta}_w$ is related to $\eta_w$ by 
	\[
		\tilde\eta_w(y)=\eta_w(y^T),
	\] 
	where $y^T$ denotes the transpose of $y$.  We have
	\[
		\tilde\eta_w(y)=x \quad  \iff \quad B_-w y=B_-x.
	\]
	Here we may write $B_-w$ for $B_-\dot w$, as the coset doesn't depend on the representative of $w$.
\end{remark}

\begin{proof}[Proof of Lemma~\ref{l:theta}]
The idea is to consider the two birational maps
\[
	\begin{array}{clccccc}
		\Psi_1:&U^P_- &\to &P\backslash G\:, & \bar u_2&\mapsto & P\bar u_2, \\
	\pi_L: &Z_d &\to& P\backslash G\:, & b_- \dot w_0=u_1 d\dot w_P\bar u_2&\mapsto &P b_- \dot w_0, 
	\end{array}
\]
and to show that the composition 
\begin{equation}\label{e:rationalBZmap}
	\theta:=\Psi_1^{-1}\circ \pi_L: Z_d \to  U_-^P.
\end{equation}
is an isomorphism.  We construct a commutative triangle of maps as follows.
\begin{center}
	\begin{tikzpicture}
  		\matrix [matrix of math nodes,row sep=1cm, column sep=1cm,text height=1.5ex,text depth=0.25ex]
  		{
	  		& |(U)| U_-\dot w_0 \cap B_+ \dot w_P U_- & \\
	 		|(Z)| Z_d & & |(R)| U^P_- \\
		};
		\path[->]
			(Z) edge node[above] {$\mu$} (U)
		;
		\path[->]
			(U) edge node[above]  {$\xi$} (R) 
		;
		\path[->]
			(Z) edge node[below] {$\theta$} (R)
		;
	\end{tikzpicture}
\end{center}
Here  $\mu : Z_d \to U_- \dot w_0 \cap B_+ \dot w_P U_-$ is an isomorphism defined by $b_- \dot w_0 \mapsto [b_-]_0^{-1} b_- \dot w_0$, where $[b_-]_0$ is the torus part of $b_-$. The inverse isomorphism $\mu^{-1}$ is given by $b_+ \dot w_P u_- \mapsto d [b_+]_0^{-1} b_+  \dot w_P u_-$. Note that clearly $Pz=P\mu(z)$ for all $z\in Z_d$.

We now define a composition $\xi$ of isomorphisms as follows,
\begin{center}
	\begin{tikzpicture}
		\matrix [matrix of math nodes,row sep=1cm, column sep=1cm,text height=1.5ex,text depth=0.25ex]
  		{
	 		|(a)| U_-\dot w_0 \cap B_+ \dot w_P U_- & |(U)| U_+ \cap B_-  w^P  B_- & |(c)| U_- \cap B_+ (\dot w^P)^{-1} B_+, \\
		};
		\path[->]
			(a) edge node[above] {$\ell_{\dot w_0^{-1}}$} (U)
		;
		\path[->]
			(U) edge node[above]  {$\varepsilon_{w^P}$} (c) 
		;
	\end{tikzpicture}
\end{center}
where $\ell_{\dot w_0^{-1}}$ is the left multiplication by $\dot w_0^{-1}$ map. Hence we obtain an isomorphism 
\[
	\xi: U_-\dot w_0 \cap B_+ \dot w_P U_- {\longrightarrow} U^P_-.
\]
Suppose $u_-\dot w_0\in U_-\dot w_0 \cap B_+ \dot w_P U_-$. To prove the identity \eqref{e:Pu} it remains to check that $Pu_-\dot w_0=P\bar u_2$ where $\bar u_2=\xi(u_-\dot w_0)$. This follows from the defining property of $\varepsilon_{w^P}$. Namely if $u_-\dot w_0\in U^P_-\dot w_0$ then if $y=\varepsilon_{s^P}(\dot w_0^{-1}w \dot w_0)$, we have
\[
	B_- \dot w_0\inv u_- \dot w_0=B_- w^P\bar u_2=B_-w_0w_P\bar u_2.
\] 
Therefore $B_+ u_-\dot w_0=B_+ w_P \bar u_2$.
\end{proof}

For the second step of the proof of Proposition~\ref{p:iso-even} we use a result of \cite{GLS} to describe the coordinate ring of the unipotent cell $U_-^P$. In Lemma~\ref{l:iso} we then explicitly relate the coordinates on $U^P_-$ to the coordinates on $\XX_{2m-2}$, which are the Pl\"ucker coordinates from Definition~\ref{def:plucker}. In this way we show that the map
\[
	\phi: U^P_-\to \X,\quad \bar u_2\mapsto P\bar u_2
\]
restricts to an isomorphism onto its image, and that this image is $\XX_{2m-2}$.

We must first define the generalized minors involved in the presentation due to \cite{GLS}. Let $G^{sc}$ be the simply-connected covering group of $G=\PSO(V)$, with Borel subgroup $B^{sc}_-$ and unipotent radical $U^{sc}_-$ projecting to $B_-$ and $U_-$ in $G$. Here $G^{sc} = \Spin(V)$.  Since $U^{sc}_- \cong U_-$ via this projection, we may use representations of $G^{sc}$ to define generalized minors of elements of $U_-$. For $u\in U_-$ we denote by $u^{sc}$ its lift to $U_-^{sc}$, and similarly for elements of $U_+$.

Let $w \in W$ have reduced expression $w=s_{i_1} s_{i_2} \dots s_{i_r}$. Write 
\[
	\bar s_j = y_j^{sc}(1)x_j^{sc}(-1)y_j^{sc}(1)
\]
and $\bar w = \bar s_{i_1} \bar s_{i_2} \dots \bar s_{i_r}$.

\begin{defn}
	Let $w \in W$ and $\omega_j$ be a fundamental weight of $G^{sc}$. Let $V_{\omega_j}$ be the irreducible representation of $G^{sc}$ with highest weight $\omega_j$ and $v_{\omega_j}^+ $ be a fixed highest weight vector. Define for any $u \in U_-$:
	\[
		\Delta_{\omega_j,w \cdot \omega_j}(u) = \langle u^{sc}  \cdot v_{\omega_j}^+ , \bar w \cdot v_{\omega_j}^+ \rangle.
	\]
	Here $\langle u^{sc}  \cdot v_{\omega_j}^+ , \bar w \cdot v_{\omega_j}^+ \rangle=\langle \bar w^{-1}u^{sc}  \cdot v_{\omega_j}^+ ,  \cdot v_{\omega_j}^+ \rangle$ denotes the highest weight vector coefficient of $\bar w^{-1}u^{sc} \cdot v_{\omega_j}^+$ in terms of the weight space decomposition.
\end{defn}

Note that the smallest representative $w^P$ in $W$ of $[w_0] \in W/W_P$ has the following reduced expression:
\begin{equation}\label{eq:redexp}
	w^P = s_1 \dots s_{m-2} s_{m-1} s_m s_{m-2} \dots s_1.
\end{equation}
Here we state the result from \cite{GLS} applied to our particular setting.

\begin{theorem}[{\cite[Section 8]{GLS}}]\label{t:GLS}
	Consider the reduced expression $s_{i_1} \dots s_{i_{2m-2}} = s_1 \dots s_{m-2} s_m s_{m-1} s_{m-2} \dots s_1$ for $(\dot w^P)^{-1}$ coming from \eqref{eq:redexp}. The coordinate ring of the unipotent cell $U_-^P:= U_- \cap B_+ (\dot w^P)^{-1} B_+$ inside $\PSO_{2m}$ is 
	\[
		\C\left[U_-^P\right] = \C\left[\Delta_{\omega_{i_r},(\dot w^P)^{-1}_{\leq r} \cdot \omega_{i_r}},\Delta_{\omega_{2m-2-s},(\dot w^P)^{-1}_{\leq s} \cdot \omega_{2m-2-s}}^{-1}\right]
	\]
	where 
	\begin{itemize}
		\item $1 \leq r \leq 2m-2$; $m-1 \leq s \leq 2m-2$ ;
    	\item $(\dot w^P)^{-1}_{\leq r} := s_{i_1} \dots s_{i_r}$.
    \end{itemize}
\end{theorem}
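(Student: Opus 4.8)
The statement is a specialization to the present situation of the general description, due to Geiß--Leclerc--Schröer, of the coordinate ring (and cluster structure) of a unipotent cell. The plan is therefore to invoke their theorem for the cell $U_-^P = U_-\cap B_+(\dot w^P)^{-1}B_+$ together with the particular reduced word for $(\dot w^P)^{-1}$ named in the statement, and then to verify that their output matches the list of generalized minors written here. Recall the shape of the general result: for $w\in W$ with a fixed reduced word $(i_1,\dots,i_r)$, one forms the generalized minors $\Delta_{\omega_{i_k},\,(\dot w)_{\le k}\cdot\omega_{i_k}}$ attached to the initial subwords $(\dot w)_{\le k}=\dot s_{i_1}\cdots\dot s_{i_k}$, $1\le k\le r$; the minors attached to the \emph{last} occurrence in the word of each Dynkin index are frozen and are to be inverted, and these data assemble into a seed for a cluster algebra structure from which $\C[U_-^w]$ is recovered. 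Feeding our cell and word into this machine should produce exactly the asserted presentation.

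The combinatorial bookkeeping is routine. Reversing the reduced word \eqref{eq:redexp} for $w^P$ directly yields the reduced word $s_1\cdots s_{m-2}\,s_m\,s_{m-1}\,s_{m-2}\cdots s_1$ for $(w^P)^{-1}$ appearing in the statement (and note that since the nodes $m-1$ and $m$ are not joined in the type-$D_m$ diagram, the mutual order of $s_{m-1}$ and $s_m$ is immaterial); its length is $2m-2=\ell(w^P)$. In this word the index $m$ occurs only at position $m-1$ and the index $m-1$ only at position $m$, while for $1\le j\le m-2$ the index $j$ occurs exactly at the two positions $j$ and $2m-1-j$. Hence the last occurrences, i.e. the frozen positions, are precisely $\{m-1,m,m+1,\dots,2m-2\}$, a set of size $m$, and the remaining $m-2$ positions $1,\dots,m-2$ are mutable. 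Substituting this into the general result reproduces the list in the statement: the $2m-2$ minors indexed by $1\le r\le 2m-2$, with the $m$ frozen ones indexed by $m-1\le s\le 2m-2$ inverted.

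The step that requires genuine care --- \emph{the main obstacle} --- is the dictionary between the conventions of \cite{GLS} and those used here. Their unipotent cell and ours are interchanged by transposition, the passage already built into Lemma~\ref{l:theta} via the Berenstein--Zelevinsky twist, so one must track how the generalized minors behave under $u\mapsto u^T$; the minors in the statement are computed in representations of the simply-connected cover $G^{sc}=\Spin_{2m}$ rather than of $G=\PSO_{2m}$, so one must check that the lift $U_-\cong U_-^{sc}$ carries the \cite{GLS} minors to the $\Delta_{\omega_j,w\cdot\omega_j}$ defined above, computed with the chosen representatives $\bar s_j=y_j^{sc}(1)x_j^{sc}(-1)y_j^{sc}(1)$; and one must make sure it is the parabolic form of the theorem (cf.\ also \cite{GLS-Partial,GLS-Survey}) that is being applied, with the cell $U_-^P$ matching the one treated there. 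Once these identifications are pinned down, Theorem~\ref{t:GLS} follows from \cite{GLS}. Alternatively, one could give a self-contained argument in the spirit of the odd-quadric case \cite{PR2}: exhibit the listed minors explicitly in terms of the matrices of Section~\ref{s:notationeven}, compute the transition maps on $U_-^P$, and check by hand that they generate $\C[U_-^P]$ after inverting the frozen ones.
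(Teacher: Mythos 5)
Your proposal takes the same route as the paper, which offers no independent proof of this statement: it simply quotes the result from \cite{GLS} as a direct specialization to the cell $U_-^P$ and the reduced word $s_1\cdots s_{m-2}s_ms_{m-1}s_{m-2}\cdots s_1$, exactly as you do, and your bookkeeping (last occurrences at positions $m-1,\dots,2m-2$, hence $m$ frozen/inverted minors and $m-2$ mutable ones) is correct. The convention-matching caveats you raise (transposition/BZ twist, lifting to $\Spin_{2m}$, parabolic form of the GLS theorem) are legitimate but are precisely what the paper absorbs into the citation without further comment.
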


If $j<m$ then $\Delta_{\omega_j,w \cdot \omega_j}(u)$ is a minor in the usual sense for the unique matrix $u^{\SO_{2m}}$ in $U_-^{\SO_{2m}}$ representing $u$. We denote the minor of $u^{\SO_{2m}}$ with row set $\{i_1,\dotsc, i_p\}$ and column set $\{j_1,\dotsc, j_p\}$ by $D_{j_1,\dots,j_p}^{i_1,\dots,i_p}(u)$. We now reformulate Theorem~\ref{t:GLS}  as follows.

\begin{cor}\label{cor:pres}
	The coordinate ring $\C\left[U_-^P\right]$ is generated by the minors
	\[
		D_{1,2,\dots,r}^{2,\dots,r,r+1}, \quad 1 \leq r \leq m-2; 
	\]
	\[
		D_{1,2,\dots,2m-1-s}^{2,\dots,2m-1-s,m+1}, \quad m+1 \leq s \leq 2m-3, \text{ and } D_1^{2m};
	\]
	the functions 
	\[
		\Delta_{\omega_m,\frac{1}{2}[-\epsilon_1+\epsilon_2+\dots+\epsilon_{m-1}-\epsilon_m]} \text{ and } \Delta_{\omega_{m-1},\frac{1}{2}[-\epsilon_1+\epsilon_2+\dots+\epsilon_m]},
	\]
which are Pfaffians; the inverses of minors
	\[
		\left(D_{1,2,\dots,2m-1-s}^{2,\dots,2m-1-s,m+1}\right)^{-1}, \quad m+1 \leq s \leq 2m-3, \text{ and } \left(D_1^{2m}\right)^{-1};
	\]
	and the inverses of Pfaffians 
	\[
		\Delta_{\omega_m,\frac{1}{2}[-\epsilon_1+\epsilon_2+\dots+\epsilon_{m-1}-\epsilon_m]}^{-1} \text{ and } \Delta_{\omega_{m-1},\frac{1}{2}[-\epsilon_1+\epsilon_2+\dots+\epsilon_m]}^{-1}.
	\]
\end{cor}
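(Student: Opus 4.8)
The plan is to unwind Theorem~\ref{t:GLS} term-by-term, translating each generalized minor $\Delta_{\omega_{i_r},(\dot w^P)^{-1}_{\leq r}\cdot\omega_{i_r}}$ appearing in the GLS presentation into either an ordinary minor of the $\SO_{2m}$-matrix $u^{\SO_{2m}}$ (when the relevant fundamental weight $\omega_{i_r}$ is not a spin weight, i.e.\ $i_r\le m-2$) or a Pfaffian (when $i_r\in\{m-1,m\}$). First I would list explicitly the sequence of indices $i_1,\dots,i_{2m-2}$ for the chosen reduced word $s_1\cdots s_{m-2}s_m s_{m-1}s_{m-2}\cdots s_1$ of $(\dot w^P)^{-1}$, and for each initial segment $(\dot w^P)^{-1}_{\le r}=s_{i_1}\cdots s_{i_r}$ compute the weight $(\dot w^P)^{-1}_{\le r}\cdot\omega_{i_r}$ by repeatedly applying simple reflections to the fundamental weight $\omega_{i_r}$. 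In the $\epsilon$-basis with $\omega_k=\epsilon_1+\cdots+\epsilon_k$ for $k\le m-2$, each $s_i$ for $i\le m-1$ transposes $\epsilon_i\leftrightarrow\epsilon_{i+1}$ and $s_m$ sends $(\epsilon_{m-1},\epsilon_m)\mapsto(-\epsilon_m,-\epsilon_{m-1})$, so this is a purely mechanical root-system computation. The upshot should be that for the first block $r=1,\dots,m-2$ one gets the weight $-\epsilon_1+\epsilon_2+\cdots+\epsilon_{r}+\epsilon_{r+1}$ paired against $\omega_r$, and similarly for the tail block $r$ corresponding to $s=2m-1-r$ one gets a weight whose nonzero $\epsilon$-coordinates pick out the row/column sets claimed.

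Next I would invoke the standard fact (see e.g.\ the Berenstein--Zelevinsky--Fomin framework, or directly the definition of $\Delta_{\omega_j,w\cdot\omega_j}$ via the weight-space decomposition) that for $j\le m-2$ the representation $V_{\omega_j}=\Lambda^j V$ realizes $\Delta_{\omega_j, w\cdot\omega_j}(u)$ as the $j\times j$ minor of $u^{\SO_{2m}}$ whose column set is $\{1,2,\dots,j\}$ (the support of $\omega_j$) and whose row set is the support of $w\cdot\omega_j$, read off from the weight computed in the previous step. Matching supports then yields exactly the minors $D^{2,\dots,r,r+1}_{1,2,\dots,r}$ for $1\le r\le m-2$, the minors $D^{2,\dots,2m-1-s,m+1}_{1,2,\dots,2m-1-s}$ for $m+1\le s\le 2m-3$, and the single $1\times 1$ minor $D^{2m}_1$ coming from $\omega_1$; the last of these requires care because the relevant weight has support $\{2m\}$, which forces a row index outside the "upper half," but this is consistent since $V=\Lambda^1 V$ is the full $2m$-dimensional representation. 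For the two remaining indices $i_r\in\{m-1,m\}$ the representations $V_{\omega_{m-1}},V_{\omega_m}$ are the half-spin representations of $\Spin_{2m}$, and the matrix coefficient $\Delta_{\omega_m,w\cdot\omega_m}$ is by definition a coordinate of a spinor; restricted to $U_-^{\SO_{2m}}$ it is a Pfaffian of a submatrix of (the skew-symmetrization of) $u$ — this is the classical identification of spinor coordinates with Pfaffians, and I would cite it rather than reprove it. Computing the two relevant weights gives $\tfrac12[-\epsilon_1+\epsilon_2+\cdots+\epsilon_{m-1}-\epsilon_m]$ and $\tfrac12[-\epsilon_1+\epsilon_2+\cdots+\epsilon_m]$ as stated.

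Finally, the inverted generators in Theorem~\ref{t:GLS} are indexed by $m-1\le s\le 2m-2$ and by fundamental weights $\omega_{2m-2-s}$; translating $s\mapsto 2m-2-s$ one sees $s=2m-2$ gives $\omega_0$-type data corresponding to $D^{2m}_1$, $s=m-1,m$ give the two half-spin Pfaffians, and $m+1\le s\le 2m-3$ give the minors $D^{2,\dots,2m-1-s,m+1}_{1,2,\dots,2m-1-s}$ — so one simply records that these particular generators, together with their inverses, appear, which is the content of the corollary. I expect the main obstacle to be \emph{bookkeeping}: getting the index ranges exactly right at the two "boundary" cases (the switch between the first block and the spin nodes, and the appearance of the anomalous row index $m+1$ or $2m$), and making sure the reduced-word convention for $(\dot w^P)^{-1}$ used here matches the one in Theorem~\ref{t:GLS}. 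The identification of half-spin matrix coefficients with Pfaffians is standard but should be stated precisely enough that the non-vanishing claims in the second bullet of the proof strategy for Proposition~\ref{p:iso-even} (namely that $\p_0,\p_{2m-2},\p_{m-1},\p_{m-1}'$ and the $\delta_\ell$ are units on the image) can be read directly off this presentation.
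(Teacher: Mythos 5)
Your proposal is correct and follows essentially the same route as the paper, which states Corollary~\ref{cor:pres} as a direct reformulation of Theorem~\ref{t:GLS}: one computes the weights $(\dot w^P)^{-1}_{\leq r}\cdot\omega_{i_r}$ along the chosen reduced word and uses the identification of $\Delta_{\omega_j,w\cdot\omega_j}(u)$ with an ordinary minor of $u^{\SO_{2m}}$ for $j\leq m-2$ (noted in the paper just before the corollary) and with a spinor coordinate, i.e.\ a Pfaffian, for $j\in\{m-1,m\}$. One small slip worth fixing: for $1\leq r\leq m-2$ the weight $s_1\cdots s_r\cdot\omega_r$ equals $\epsilon_2+\cdots+\epsilon_{r+1}$ (a sum of $r$ terms, matching the row set $\{2,\dots,r+1\}$ of $D^{2,\dots,r,r+1}_{1,2,\dots,r}$), not $-\epsilon_1+\epsilon_2+\cdots+\epsilon_{r+1}$ as you predicted, which has $r+1$ terms and so cannot be a weight of $\Lambda^{r}V$.
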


To relate the minors and Pfaffians of Corollary \ref{cor:pres} to the Pl\"ucker coordinates we will need to use a specific factorisation of generic elements of $U_-^P$. By an application of Bruhat's lemma \cite{Lusztig94}, a generic element in $U^P_-$ can be assumed to have a particular factorisation:
\begin{equation}\label{e:u2barfactb}
	\bar u_2 = y_1(a_1) \dots y_{m-2}(a_{m-2}) y_m(d) y_{m-1}(c) y_{m-2}(b_{m-2}) \dots y_1(b_1),
\end{equation}
where $a_i, c, d, b_j \ne 0$.

We have the following standard expression for the $\p_k$ on factorized elements, which is a simple consequence of their definition. 
\begin{lemma}\label{l:bislemma} 
	Fix $0 \leq k \leq 2m-2$ an integer. Then if $\bar u_2$ is of the form \eqref{e:u2barfactb} we have
	\begin{align*}
		\p_k(\bar u_2) = 	\begin{cases}
		    					1 & \text{if $k=0$,} \\
                    			a_1 \dots a_{k-1} (a_k + b_k) & \text{if $1 \leq k \leq m-2$,} \\
                    			a_1 \dots a_{m-2} c & \text{if $k=m-1$,} \\
                    			a_1 \dots a_{m-2} c d & \text{if $k=m$,} \\
                    			a_1 \dots a_{m-2} c d b_{m-2} \dots b_{2m-1-k} & \text{otherwise.}
                   			\end{cases}
 	\end{align*}
 	and
 	\[
  		\p_{m-1}'(\bar u_2) = a_1 \dots a_{m-2} d.\qed
 	\]
\end{lemma}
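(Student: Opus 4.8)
The plan is to compute each Pl\"ucker coordinate $\p_k(\bar u_2)$ directly from Definition~\ref{def:plucker}, using the factorization \eqref{e:u2barfactb}. Recall from the Remark after Definition~\ref{def:plucker} that the Pl\"ucker coordinates of $\bar u_2 \in U_-^P$ are just (up to scalar) the bottom row entries of the matrix $[\bar u_2] \in \SO(V)$, read from right to left via \eqref{e:basisV}; equivalently $\p_k(\bar u_2) = \langle v_{2m}^* \cdot [\bar u_2], [w_k]\cdot v_{2m}\rangle$. So the first step is to understand the action of each elementary unipotent $y_i(t) = \exp(t f_i)$ on the relevant weight vectors of $V = \C^{2m}$, where $f_i = e_i^T$ is given explicitly in Section~\ref{s:notationeven}. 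Since the bottom-row covector is $v_{2m}^*$, and $[\bar u_2]$ acts on the right, what we are really tracking is the image of $v_{2m}^*$ under right-multiplication by the product of the $y_i(t)$'s --- equivalently, the image of $v_{2m}$ under the transpose, i.e. under a product of $x_i(t)$'s applied from the left. Concretely, I would follow the standard picture: $v_{2m}$ is the lowest weight vector, $f_i$ raises the weight along the $i$-th simple root, and the monomial in the $a$'s, $b$'s, $c$, $d$ accumulated in each coordinate is dictated by which reduced subwords of the word $(1,2,\dots,m-2,m,m-1,m-2,\dots,1)$ have the correct Demazure/weight type to pair nontrivially with $[w_k]\cdot v_{2m}$.

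Second, I would organize the computation by the position of $k$ relative to $m$, matching the case division in the statement. For $0 \le k \le m-2$, the target vector $[w_k]\cdot v_{2m} = v_{2m-k}$ (for $k<m-1$) is reached from $v_{2m}$ by applying $f_1, \dots, f_k$ once each; the word \eqref{e:u2barfactb} contains the letter $i$ in two "blocks" (the initial $a$-block $y_1(a_1)\cdots y_{m-2}(a_{m-2})$ and the terminal $b$-block $y_{m-2}(b_{m-2})\cdots y_1(b_1)$), so the coefficient picks up exactly $a_1\cdots a_{k-1}$ times the choice $(a_k$ from the first block or $b_k$ from the second block$)$, giving $a_1\cdots a_{k-1}(a_k+b_k)$. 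For $k = m-1$ we additionally pass through $y_{m-1}(c)$ (and $[w_{m-1}]\cdot v_{2m} = v_{m+1}$); for $k=m$ through both $y_m(d)$ and $y_{m-1}(c)$; for the two "spinor-type" coordinates I must use the correct representatives $[w_{m-1}']\cdot v_{2m} = v_m$ versus $[w_{m-1}]\cdot v_{2m}=v_{m+1}$ as fixed in \eqref{e:basisV} to separate the roles of $c$ and $d$, which is exactly what produces $\p_{m-1} = a_1\cdots a_{m-2}c$ against $\p_{m-1}' = a_1\cdots a_{m-2}d$ and $\p_m = a_1\cdots a_{m-2}cd$. For $k \ge m+1$, the vector $[w_k]\cdot v_{2m} = v_{2m-k-1}$ is obtained by the long word $s_{2m-1-k}\cdots s_{m-2}s_m s_{m-1}s_{m-2}\cdots s_1$ applied to $v_{2m}$, which forces us to traverse the entire $a$-block, then $d$, then $c$, then the initial segment $b_{m-2},\dots,b_{2m-1-k}$ of the $b$-block, yielding $a_1\cdots a_{m-2}\, c\, d\, b_{m-2}\cdots b_{2m-1-k}$; here there is no "$+$" because there is a unique subword of the right shape once we are past the fork in the $D_m$ diagram.

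The main obstacle I expect is the careful bookkeeping near the branch node of the $D_m$ Dynkin diagram --- keeping straight which of $c$ and $d$ (equivalently $y_{m-1}$ and $y_m$) feeds into $\p_{m-1}$, $\p_{m-1}'$, and $\p_m$, and verifying that the chosen matrix representatives $[w_k]$, $[w_{m-1}']$ satisfying \eqref{e:basisV} really are the ones making $\langle v_{2m}^*\cdot[\bar u_2], [w_k]\cdot v_{2m}\rangle$ come out as the clean monomials claimed, with no sign ambiguity. One clean way to sidestep the Lie-theoretic subtleties entirely is to simply write down the $2m \times 2m$ matrices $[y_i(t)]$ acting on $V = \C^{2m}$ from the explicit formula for $f_i$ (each $y_i(t)$ differs from the identity in at most two off-diagonal $2\times2$ blocks, and $y_m(t)$ has a single $2\times 2$ block in rows/columns $m-1,m,m+1,m+2$), multiply out the product \eqref{e:u2barfactb} --- which is manageable because only the bottom row is needed --- and read off the entries in positions $2m, 2m-1, \dots, 1$. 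That reduces everything to a short upper-triangular-ish matrix computation, and the stated formulas fall out; I would present this matrix-multiplication argument as the proof, since it is self-contained and leaves no room for sign errors. Note that the lemma itself records this as "a simple consequence of their definition", so the write-up can be correspondingly brief: set up the matrices, multiply, and extract the bottom row.
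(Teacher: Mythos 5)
Your proposal is correct and matches the paper's (implicit) argument: the paper states this lemma with no written proof, calling it ``a simple consequence of their definition,'' and the intended justification is exactly your second route --- write each $y_i(t)=I+tf_i$ as an explicit matrix, multiply out the factorization \eqref{e:u2barfactb} tracking only the bottom row, and read off the coefficients of $v_{2m-k}$ (resp.\ $v_{m+1}$, $v_m$, $v_{2m-k-1}$) per \eqref{e:basisV}. The bookkeeping at the branch node works out as you describe, with $c$ feeding $\p_{m-1}$ and $d$ feeding $\p_{m-1}'$, so nothing further is needed.
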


We can now prove the lemma we need. 

\begin{lemma}\label{l:iso}
	We have the following equalities of generalised minors and Pl\"ucker coordinates evaluated on $\bar u_2\in U^P_-$:
	\begin{align}
		& D_1^{2m}(\bar u_2) =\p_{2m-2}(\bar u_2),\label{e:minor1} \\ 
		& \Delta_{\omega_{m-1},\frac{1}{2}[-\epsilon_1+\epsilon_2+\dots+\epsilon_m]}(\bar u_2)=\p_{m-1} (\bar u_2), \label{e:minor2} \\
		& \Delta_{\omega_m,\frac{1}{2}[-\epsilon_1+\epsilon_2+\dots+\epsilon_{m-1}-\epsilon_m]} (\bar u_2) = \p_{m-1}'(\bar u_2),\label{e:minor3} \\
		& D_{1,2,\dots,2m-1-s}^{2,\dots,2m-1-s,m+1}(\bar u_2)=	\delta_{s-m}(\bar u_2), \text{ for $m+1\leq s \leq 2m-3$,}\label{e:minor4}
	\end{align}
 	where we recall that $\delta_{s-m} = \sum_{k=s}^m (-1)^{s-k} \p_{k-m} \p_{3m-2-k}$.
\end{lemma}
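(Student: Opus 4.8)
The strategy is to evaluate each generalized minor or Pfaffian directly on the factorized element $\bar u_2$ of the form \eqref{e:u2barfactb}, using the geometric Satake identification of $V=H^*(Q_{2m-2},\C)$ and the explicit formulas for $\p_k(\bar u_2)$ from Lemma~\ref{l:bislemma}. The key conceptual point is that each generalized minor $\Delta_{\omega_j, w\cdot\omega_j}$, when $j<m$, can be computed inside the fundamental representation $V_{\omega_j}=\bigwedge^j V$, and when $j\in\{m-1,m\}$ inside the two half-spin representations, which are the two irreducible summands of $\bigwedge^{m-1}V$ (and for $\bigwedge^m V$ the corresponding decomposition). So the plan is: (i) express $\bar w\cdot v_{\omega_j}^+$ as an explicit wedge $v_{i_1}\wedge\dots\wedge v_{i_j}$ of standard basis vectors, by tracking the reduced words for the relevant Weyl group elements through the action on weights; (ii) identify the pairing $\langle \bar u_2^{sc}\cdot v_{\omega_j}^+,\bar w\cdot v_{\omega_j}^+\rangle$ with an ordinary $j\times j$ minor $D^{\,2,\dots}_{\,1,\dots}$ of the $\SO_{2m}$-matrix of $\bar u_2$ (for $j<m$), or with a Pfaffian-type coefficient in the spin representation (for $j\in\{m-1,m\}$); (iii) compute that minor/Pfaffian explicitly on the factorization \eqref{e:u2barfactb}.

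For \eqref{e:minor1}, $\Delta_{\omega_{2m-2}\dots}$-type, i.e. the minor $D_1^{2m}(\bar u_2)$: this is literally the $(2m,1)$-entry of the matrix $\bar u_2^{\SO_{2m}}$, which by Lemma~\ref{l:bislemma} and Definition~\ref{def:plucker} (bottom-row entries read right to left) equals $\p_{2m-2}(\bar u_2)$; this is essentially immediate. For \eqref{e:minor4}, one identifies $D^{2,\dots,2m-1-s,m+1}_{1,2,\dots,2m-1-s}(\bar u_2)$ as a maximal minor living in $\bigwedge^{2m-1-s}V$; expanding along the last column $m+1$ via Laplace expansion and using that the principal minors $D^{2,\dots,r+1}_{1,\dots,r}(\bar u_2)=\p_r(\bar u_2)$ for the leading block, together with the explicit products of $a_i$'s, $c$, $d$, $b_j$'s, should telescope into the alternating sum $\sum_{k=s}^m(-1)^{s-k}\p_{k-m}\p_{3m-2-k}=\delta_{s-m}$. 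The cleanest route here is probably to verify the minor's value on \eqref{e:u2barfactb} by induction on $s$, or by directly writing out the relevant submatrix of $\bar u_2^{\SO_{2m}}$ (whose nonzero entries off the diagonal are products of the $y_i$-parameters) and recognizing the cofactor expansion as the definition of $\delta_{s-m}$.

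The genuinely delicate cases are \eqref{e:minor2} and \eqref{e:minor3}, the two Pfaffians $\Delta_{\omega_{m-1},\frac12[-\epsilon_1+\epsilon_2+\dots+\epsilon_m]}$ and $\Delta_{\omega_m,\frac12[-\epsilon_1+\dots-\epsilon_m]}$, because these are coefficients in the half-spin representations $V_{\omega_{m-1}}$, $V_{\omega_m}$ rather than ordinary wedge powers, so there is no naive "minor" to write down. The plan is to realize $V_{\omega_{m-1}}\oplus V_{\omega_m}\cong\bigwedge^{m-1}V$ (or the analogous statement with $\bigwedge^{m}V$ matching the other spin rep up to the SO-invariant form) and to express the spin weight vectors of weight $\frac12[-\epsilon_1+\epsilon_2+\dots+\epsilon_m]$ and $\frac12[-\epsilon_1+\dots-\epsilon_m]$ in terms of isotropic "half-flags"; concretely one tracks $\bar w_{m-1}\cdot v_{\omega_{m-1}}^+$ and $\bar w_{m-1}'\cdot v_{\omega_m}^+$ under the reduced words $w_{m-1}=s_{m-1}s_{m-2}\cdots s_1$ and $w_{m-1}'=s_m s_{m-2}\cdots s_1$ given in Section~\ref{s:notationeven}, using that these are exactly the Weyl elements with $[w_{m-1}]v_{2m}=v_{m+1}$ and $[w_{m-1}']v_{2m}=v_m$ by \eqref{e:basisV}, and that under Satake $[w_{m-1}]v_{2m}=\sigma_{m-1}$, $[w_{m-1}']v_{2m}=\sigma_{m-1}'$ by \eqref{e:SchubertGS}. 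Then the pairing $\langle \bar u_2^{sc}v^+_{\omega_{m-1}},\bar w_{m-1}v^+_{\omega_{m-1}}\rangle$ reduces — by the same bottom-row-of-the-matrix principle that governs $\p_{m-1}$, $\p_{m-1}'$ in Definition~\ref{def:plucker} — to the single Pl\"ucker coordinate $\p_{m-1}(\bar u_2)$ (resp. $\p_{m-1}'(\bar u_2)$), and one checks the normalization against Lemma~\ref{l:bislemma}, where $\p_{m-1}(\bar u_2)=a_1\cdots a_{m-2}c$ and $\p_{m-1}'(\bar u_2)=a_1\cdots a_{m-2}d$. I expect the main obstacle to be exactly this spin-representation bookkeeping: matching the weight $\frac12[-\epsilon_1+\epsilon_2+\dots+\pm\epsilon_m]$ spin vector to the correct Pl\"ucker line, keeping signs consistent with the choice of $\dot s_i$ and $\bar s_i$, and confirming that no spurious scalar appears — once that dictionary is in place, the remaining computations are routine expansions on the factorization \eqref{e:u2barfactb}.
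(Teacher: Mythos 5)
Your treatment of \eqref{e:minor1} and \eqref{e:minor4} is essentially the paper's: \eqref{e:minor1} is indeed immediate from Definition~\ref{def:plucker}, and for \eqref{e:minor4} the paper also does a Laplace expansion along the column indexed by $m+1$; the one ingredient you should make explicit is that the paper repeatedly uses the orthogonality of $\bar u_2$ with respect to $Q$ to trade a minor for a complementary one (e.g.\ $D_{1,2,\dots,2m-2-s}^{2,\dots,2m-1-s}=D_{1,\dots,s+2}^{1,\dots,s+1,2m}=\p_{2m-2-s}$), which is what makes the cofactor expansion telescope into $\delta_{s-m}$.

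The genuine gap is in your plan for \eqref{e:minor2} and \eqref{e:minor3}. The identification you propose, $V_{\omega_{m-1}}\oplus V_{\omega_m}\cong\bigwedge^{m-1}V$, is false: the left side has dimension $2^{m-1}+2^{m-1}=2^m$ while $\dim\bigwedge^{m-1}\C^{2m}=\binom{2m}{m-1}$ (already $16$ versus $56$ for $m=4$). In fact $\bigwedge^{m-1}V\cong V_{\omega_{m-1}+\omega_m}$ and $\bigwedge^{m}V\cong V_{2\omega_{m-1}}\oplus V_{2\omega_m}$, so the half-spin representations are not summands of any exterior power of $V$, and the ``dictionary'' on which you defer the whole computation does not exist in the form you describe. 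Relatedly, the spinor coordinates do not reduce to Pl\"ucker coordinates of $\mathbb P(V^*)$ by the ``bottom-row-of-the-matrix principle,'' since they live in a different representation. The missing idea — and the device the paper actually uses — is the classical fact that the \emph{square} of such a spinor (Pfaffian) coordinate is an ordinary $m\times m$ minor of the $\SO_{2m}$-matrix: here
\[
\Delta_{\omega_{m-1},\frac{1}{2}[-\epsilon_1+\epsilon_2+\dots+\epsilon_m]}(\bar u_2)^2=D_{1,\dots,m-1,m+1}^{2,\dots,m,2m}(\bar u_2),
\]
consistent with $V_{2\omega_{m-1}}\subset\bigwedge^{m}V$. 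Two Laplace expansions along the last column, together with the orthogonality relation $D_{1,\dots,m-1}^{2,\dots,m}=D_{1,\dots,m+1}^{1,\dots,m,2m}$ and the fact that $\bar u_2$ is lower unipotent, show this minor equals $\p_{m-1}(\bar u_2)^2$; the sign of the square root is then fixed by evaluating on totally positive $\bar u_2$, where both sides are positive. Without this (or an equivalent explicit model of the half-spin representation, e.g.\ via the exterior algebra of a maximal isotropic subspace), the key step of your argument cannot be carried out.
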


\begin{proof}
The identity \eqref{e:minor1} follows immediately from the definition of the Pl\"ucker coordinates. For the identity \eqref{e:minor2}, write
\[
	\Delta_{\omega_{m-1},\frac{1}{2}[-\epsilon_1+\epsilon_2+\dots+\epsilon_m]}(\bar u_2)=(D_{1,\dots,m-1,m+1}^{2,\dots,m,2m}(\bar u_2))^{\frac{1}{2}}.
\]
Note that in the definition of $\Delta_{\omega_{j},w\cdot \omega_j}$ we have chosen the representative $\bar w$ in such a way that evaluated on a factorized $\bar u_2$ the generalized minors will be nonnegative for any positive choice of the coordinates $a_i, b_i, c,d$ (i.e. on 'totally positive' $\bar u_2$). This determines the choice of square root.  
Then developing $D_{1,\dots,m-1,m+1}^{2,\dots,m,2m}(\bar u_2)$ with respect to the last column, we get
\[
	D_{1,\dots,m-1,m+1}^{2,\dots,m,2m}(\bar u_2) = D_{1,\dots,m-1}^{2,\dots,m}(\bar u_2) D^{2m}_{m+1}(\bar u_2) = p_{m-1}(\bar u_2) D_{1,\dots,m-1}^{2,\dots,m}(\bar u_2)
\]
using the definition of $p_{m-1}(\bar u_2)$. Finally, since the matrix is $\bar u_2$ orthogonal:
\[
	D_{1,\dots,m-1}^{2,\dots,m}(\bar u_2) = D_{1,\dots,m+1}^{1,\dots,m,2m}(\bar u_2).
\]
Developing again with respect to the last column, we obtain
\[
	D_{1,\dots,m+1}^{1,\dots,m,2m}(\bar u_2) = D_{1,\dots,m}^{1,\dots,m}(\bar u_2) D^{2m}_{m+1}(\bar u_2) = p_{m-1}(\bar u_2),
\]
using the definition of $p_{m-1}(\bar u_2)$ and the fact that $\bar u_2$ is lower unipotent. The identity~\eqref{e:minor2} then follows. The proof of the identity~\eqref{e:minor3} is similar.

Let us now prove the identity ~\eqref{e:minor4}. Developing $D_{1,2,\dots,2m-1-s}^{2,\dots,2m-1-s,m+1}(\bar u_2)$ with respect to the $(2m-1-s)$-th column, we see that it is equal to
\[
	D_{2m-1-s}^{m+1}(\bar u_2) D_{1,2,\dots,2m-2-s}^{2,\dots,2m-1-s}(\bar u_2)-D_{1,\dots,2m-2-s}^{2,\dots,2m-2-s,m+1}(\bar u_2).
\]
Since $\bar u_2$ is orthogonal for $Q$, we have
\[
	D_{1,2,\dots,2m-2-s}^{2,\dots,2m-1-s}(\bar u_2) = D_{1,\dots,s+2}^{1,\dots,s+1,2m}(\bar u_2),
\]
and since $\bar u_2$ is in $U_-$,
\[
	D_{1,\dots,s+2}^{1,\dots,s+1,2m}(\bar u_2) = D_{s+2}^{2m} (\bar u_2)= \p_{2m-2-s} (\bar u_2).
\]
Finally
\[
	D_{1,2,\dots,2m-1-s}^{2,\dots,2m-1-s,m+1}(\bar u_2) = D_{2m-1-s}^{m+1}(\bar u_2) \p_{2m-2-s}(\bar u_2) -D_{1,\dots,2m-2-s}^{2,\dots,2m-2-s,m+1}(\bar u_2),
\]
hence
\[
	D_{1,2,\dots,2m-1-s}^{2,\dots,2m-1-s,m+1}(\bar u_2) = \sum_{k=s}^{2m-2} (-1)^{s-k} D_{2m-1-s}^{m+1} (\bar u_2)\p_{2m-2-s} (\bar u_2).
\]
We also have $D_{2m-1-s}^{m+1} (\bar u_2)= d b_{2m-2} \dots b_{2m-1-s}$ for $m+1 \leq s \leq 2m-2$. Indeed, by definition
\[
	D_{2m-1-s}^{m+1} (\bar u_2) = \langle v_{m+1}^* \cdot \bar u_2, v_{2m-1-s} \rangle = d b_{2m-2} \dots b_{2m-1-s}.
\]
Hence
\begin{align*}
	D_{1,2,\dots,2m-1-s}^{2,\dots,2m-1-s,m+1}(\bar u_2) &= \sum_{k=s}^{2m-2} (-1)^{s-k} d b_{2m-2} \dots b_{2m-1-s} \p_{2m-2-s} \\
	&= \sum_{k=s}^m (-1)^{s-k} \p_{k-m}(\bar u_2) \p_{3m-2-k}(\bar u_2). \qedhere
\end{align*}
\end{proof}

\begin{proof}[Proof of Proposition~\ref{p:iso-even}]
Recall that $\pi_L=\phi\circ \theta$ where $\theta$ is the isomorphism constructed in Lemma~\ref{l:theta} and $\phi:U^P_-\to\X$ is the natural map $\bar u_2\mapsto P\bar u_2$. It remains to prove that $\phi$ is an isomorphism onto $\XX_{2m-2}$. We start by proving that the image of $\phi$ is contained in $\XX_{2m-2}$.

Indeed, if $\bar u_2 \in U_-^P$, then by  Corollary \ref{cor:pres}  the minors $D_{1,2,\dots,2m-1-s}^{2,\dots,2m-1-s,m+1}(\bar u_2)$ and $D_1^{2m}(\bar u_2)$ and the Pfaffians $\Delta_{\omega_m,\frac{1}{2}[-\epsilon_1+\epsilon_2+\dots+\epsilon_{m-1}-\epsilon_m)]}(\bar u_2)$ and $\Delta_{\omega_{m-1},\frac{1}{2}[-\epsilon_1+\epsilon_2+\dots+\epsilon_m)]}(\bar u_2)$ do not vanish. Since we have proved in Lemma \ref{l:iso} that those correspond precisely to the divisors involved in defining $\XX_{2m-2}$, it follows that $P\bar{u_2} \in \XX_{2m-2}$. We may now prove that $\phi$ is an isomorphism between $U^P_-$ and $\XX_{2m-2}$.

Injectivity of the pullback map $\phi^*:\C[\XX_{2m-2}] \to \C[U_-^P]$  is a simple consequence of the fact that the map $U_-^P \to \X_{2m-2}$ is dominant. We now prove that $\phi^*$ is surjective by observing that each of the functions  generating $\C[U_-^P]$  (as in Corollary~\ref{cor:pres}) has a preimage. 
 
We have already seen that the inverses of minors and Pfaffians correspond to the inverses of denominators of $\W$. Let us now consider the minors $D_{1,2,\dots,r}^{2,\dots,r,r+1}$ for $1 \leq r \leq m-2$ and $D_{1,2,\dots,2m-1-s}^{2,\dots,2m-1-s,m+1}$ for $m+1 \leq s \leq 2m-3$. In Lemma \ref{l:iso}, we proved that
\[
 	D_{1,2,\dots,2m-1-s}^{2,\dots,2m-1-s,m+1}=\phi^*(\delta_{s-m})
\]
and 
\begin{align*}
  	D_{1,\dots,r}^{2,\dots,r+1} = D_{1,\dots,2m-r}^{1,\dots,2m-1-r} = D_{2m-r}^{2m}=\phi^*(\p_r).
\end{align*}
Finally, $D_1^{2m}=\phi^*(\p_{2m-2})$, and the Pfaffians 
\[
 	\Delta_{\omega_m,\frac{1}{2}[-\epsilon_1+\epsilon_2+\dots+\epsilon_{m-1}-\epsilon_m)]} \text{ and }  \Delta_{\omega_{m-1},\frac{1}{2}[-\epsilon_1+\epsilon_2+\dots+\epsilon_m)]}.
\]
are pullbacks of the Pl\"ucker coordinates $p_{m-1}'$ and $p_{m-1}$, by Lemma~\ref{l:iso}. This concludes the proof.
\end{proof}

\subsection{Comparison of the superpotentials\label{s:Isorat}}

In this section we will prove Theorem~\ref{t:even}. We saw in the previous section that $\pi=\pi_L\circ \pi_R\inv:\RR \to \XX_{2m-2}$ is an isomorphism. Note that we have a commutative diagram 

\begin{center}
	\begin{tikzpicture}
  		\matrix [matrix of math nodes,row sep=1cm, column sep=1cm,text height=1.5ex,text depth=0.25ex]
  			{
	  			|(Z)| Z_d & & |(R)| \RR \\
	 			& |(C)| \C & \\
			};
		\path[->]
				(Z) edge node[above] {$\pi_R$} (R)
		;
		\path[->]
			(Z) edge node[below] {$\sim$} (R)
		;
		\path[->]
			(Z) edge node[below]  {$F_q$}  (C) 
		;
		\path[->]
			(R) edge node[below,xshift=0.5 cm] {$\F$}  (C)
		;
	\end{tikzpicture}
\end{center}
Therefore 
\[
	(\pi\inv)^*(\F)=(\pi_L\inv)^*(F_q).
\]
This gives a regular function on $\XX_{2m-2}$ which we denote by  $\widetilde \W$. The statement of Theorem~\ref{t:even} says that $\widetilde \W$ and $\W$ agree. We will prove this by expressing both functions in terms of coordinates introduced earlier. Namely we consider the set of factorized elements $P\bar u_2$ with $\bar u_2$ as in  \eqref{e:u2barfactb} with nonzero coordinates $a_i,b_i, c,d$ as defining an open dense subvariety inside $\XX_{2m-2}$ which is isomorphic to a torus. We call this subvariety $\XX_{2m-2}$. To finish the proof we will show that the restrictions of $\widetilde \W$ and of $\W$ to $\XX_{2m-2}$ agree. This will additionally give an interesting Laurent polynomial formula for the superpotential, which we will use in Section~\ref{s:aseries} to describe a flat section of the Dubrovin connection. 

\begin{prop}\label{p:W1} 
	$\widetilde \W$ and $\W$ restricted to a particular torus $\XXlus$ inside $\XX_{2m-2}$ have the following Laurent polynomial expression 
  	\begin{equation*}
     	\HH = a_1 + \dots + a_{m-2} + c + d + b_{m-2} + \dots + b_1 + q \frac{a_1+b_1}{a_1 \dots a_{m-2} c d b_{m-2} \dots b_1}.
  	\end{equation*}
\end{prop}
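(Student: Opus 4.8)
The plan is to compute both $\widetilde{\mathcal W}$ and $\mathcal W$ explicitly on the torus $\XXlus$ of factorized elements $P\bar u_2$ with $\bar u_2$ as in \eqref{e:u2barfactb}, and observe they give the same Laurent polynomial $\mathcal W_q$. The function $\mathcal W$ is immediate: by Lemma~\ref{l:bislemma} we have explicit monomial expressions for $\p_k(\bar u_2)$ in terms of $a_i,b_i,c,d$ (and $\p_{m-1}'(\bar u_2)=a_1\cdots a_{m-2}d$), so one directly substitutes into \eqref{eq:Weven}. Here one uses the identity $\delta_\ell(\bar u_2)=a_1\cdots a_{\ell-1}(a_\ell+b_\ell)a_1\cdots a_\ell$ — or more precisely checks that $\delta_\ell = \p_\ell\p_{2m-2-\ell}-\p_{\ell-1}\p_{2m-1-\ell}+\dots$ telescopes on factorized elements to $a_1^2\cdots a_{\ell-1}^2 a_\ell(a_\ell+b_\ell)$ or a similar clean monomial — so that each term $\p_{\ell+1}\p_{2m-2-\ell}/\delta_\ell$ simplifies to $a_{\ell+1}+b_{\ell+1}$ (up to the bookkeeping of the $a_i$'s), and likewise $\p_m/\p_{m-1}=d$, $\p_m/\p_{m-1}'=c$, $\p_1/\p_0 = a_1+b_1$, and $q\p_1/\p_{2m-2}=q(a_1+b_1)/(a_1\cdots a_{m-2}cd\,b_{m-2}\cdots b_1)$, summing to $\mathcal W_q$.

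For $\widetilde{\mathcal W}$, I would unwind the definition $\mathcal F$ from \eqref{e:F}. Given the factorized $\bar u_2$, one must reconstruct $u_1\in U_+$ and the torus element $d$ such that $g=u_1 d\dot w_P\bar u_2\in Z_d = B_-\dot w_0\cap U_+ d\dot w_P U_-$; then $\widetilde{\mathcal W}= \sum e_i^*(u_1)+\sum f_i^*(\bar u_2)$. The contribution $\sum f_i^*(\bar u_2)$ of the $U_-$-part is read off directly from \eqref{e:u2barfactb}: the Chevalley coordinates $f_i^*$ pick out the "outer" factors, giving $a_1+b_1$ from the two occurrences of $y_1$, and $c$, $d$ from $y_{m-1}(c)$, $y_m(d)$ — but one has to be careful since some $f_i^*$ get contributions from both an $a_j$ and a $b_j$ factor. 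The real content is computing $\sum e_i^*(u_1)$: this requires solving the factorization equation $u_1 d\dot w_P\bar u_2 = b_-\dot w_0$, equivalently $u_1 = b_-\dot w_0 \bar u_2^{-1}\dot w_P^{-1}d^{-1}$, and extracting the Chevalley coordinates of the $U_+$-part. I expect this to reduce, via the BZ twist machinery already set up in Lemma~\ref{l:theta} (the map $\theta$ and its inverse, built from $\varepsilon_{w^P}$ and left multiplication by $\dot w_0^{-1}$), to a clean statement: $e_i^*(u_1)$ equals a ratio of consecutive Plücker/minor coordinates, matching the $a_i,b_i,c,d$-monomials produced by $\mathcal W$. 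Concretely, I would either cite or adapt the analogous computation for odd quadrics in \cite{PR2} (Proposition~\ref{p:odd-quadric}), where exactly this kind of identification of Lusztig-type coordinates with critical-chart coordinates is carried out.

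The main obstacle is the bookkeeping in the $D_m$-type Dynkin combinatorics: tracking how the fork at the end of the diagram (nodes $m-1$ and $m$, with $w_{m-1}$ and $w'_{m-1}$) interacts with the reduced word $w^P = s_1\cdots s_{m-2}s_{m-1}s_m s_{m-2}\cdots s_1$ and the factorization \eqref{e:u2barfactb}, and making sure the two "branch" coordinates $c$ and $d$ land symmetrically as the two terms $\p_m/\p_{m-1}$ and $\p_m/\p_{m-1}'$. A secondary subtlety is the choice of representatives $\dot s_i$ versus $\bar s_i$ (the sign conventions in \eqref{e:si} versus the bars used in defining the generalized minors) and the resulting square-root ambiguity in the Pfaffian coordinates $\p_{m-1},\p_{m-1}'$, already flagged in the proof of Lemma~\ref{l:iso}; one must check these signs are consistent so that no spurious signs appear in $\mathcal W_q$. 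Once the two restrictions are matched on the open torus $\XXlus$, they agree on all of $\XX_{2m-2}$ by density and regularity, which also proves Theorem~\ref{t:even}.
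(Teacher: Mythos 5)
Your overall strategy coincides with the paper's: evaluate both $\widetilde\W$ and $\W$ on the torus of factorized elements \eqref{e:u2barfactb}, using Lemma~\ref{l:bislemma} for the Pl\"ucker coordinates (the telescoping of $\delta_\ell$ to a monomial and the resulting simplification of each term of $\W$ is exactly what the paper does in the proof of Theorem~\ref{t:even}), and using the formula \eqref{e:F}, $\mathcal F=\sum e_i^*(u_1)+\sum f_i^*(\bar u_2)$, for $\widetilde\W$. Your reading of the $f_i^*$-part is correct: $f_i^*(\bar u_2)=a_i+b_i$ for $1\le i\le m-2$, and equals $c$, $d$ for $i=m-1$, $m$.

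The genuine gap is the $e_i^*$-part, which is the crux and which you only ``expect'' to work out. What must be proved (Lemma~\ref{p:eis}, identity \eqref{eq:ei2first}) is that $e_i^*(u_1)=0$ for all $2\le i\le m$ and that $e_1^*(u_1)=q(a_1+b_1)/(a_1\cdots a_{m-2}cd\,b_{m-2}\cdots b_1)$. Your expectation that each $e_i^*(u_1)$ is a ratio of consecutive Pl\"ucker coordinates ``matching the monomials produced by $\W$'' is off the mark: all but one of these quantities vanish, and the single surviving one carries the quantum parameter and produces the one term of $\HH$ that the $f_i^*$-part cannot supply. The paper does not obtain this from the BZ twist of Lemma~\ref{l:theta}; it writes $e_i^*(u_1)$ as the ratio of matrix coefficients $\langle d\dot w_P\bar u_2\cdot v_{\omega_i}^+, e_i\cdot v_{\omega_i}^-\rangle/\langle d\dot w_P\bar u_2\cdot v_{\omega_i}^+, v_{\omega_i}^-\rangle$ and argues by weights: the vanishing for $i\ge 2$ follows from $\ell(w_P^{-1}s_i)>\ell(w_P^{-1})$ together with $\bar u_2\in B_+(\dot w^P)^{-1}B_+$, and for $i=1$ the factor $q$ appears as $\alpha_1(d)$ while the numerator $a_1+b_1$ and denominator $a_1\cdots a_{m-2}cd\,b_{m-2}\cdots b_1$ are read off from the factorization \eqref{e:u2barfactb} by tracking which weight spaces can be reached. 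Without this computation (or a genuine adaptation of the odd-quadric argument, which you cite but do not carry out), the quantum term of $\HH$ is unaccounted for. A smaller point: your tentative formula for $\delta_\ell(\bar u_2)$ is not right --- the correct value is $(a_1\cdots a_\ell)^2a_{\ell+1}\cdots a_{m-2}cd\,b_{m-2}\cdots b_{\ell+1}$, which does retain the $c$, $d$ and $b_j$ factors --- but you hedge this appropriately and the intended cancellation does go through.
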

We call $(\XXlus,\HH)$ the \emph{quiver mirror}. To prove Proposition \ref{p:W1} we will need the following:
\begin{lemma}\label{p:eis}
	If $u_1\in U_+$, $\bar u_2 \in U_-$, $u_1 d \dot w_P \bar u_2\in Z_d$, and $\bar u_2$ can be written as in \eqref{e:u2barfactb}, then we have the following identities:
	\begin{align}\label{eq:ei1first}
  		f_i^*(\bar u_2) = 	\begin{cases}
                     			a_i + b_i & \text{if $1 \leq i \leq m-2$,} \\
                     			c & \text{if $i=m-1$,} \\
                     			d & \text{if $i=m$.} \\
                    		\end{cases}
 	\end{align}
 
 	\begin{align}\label{eq:ei2first}
   		e_i^* (u_1) = 	\begin{cases}
                  			0 & \text{if $2 \leq i \leq m$,} \\
                 			 q \frac{a_1 + b_1}{a_1 \dots a_{m-1} c d b_{m-1} \dots b_1} & \text{if $i=1$}.
                 		\end{cases}
	 \end{align}
\end{lemma}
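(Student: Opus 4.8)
The plan is to compute both sides directly using the explicit factorization \eqref{e:u2barfactb} of $\bar u_2$, the defining property of $Z_d$, and the action on the defining representation $V$. First I would recall that $f_i^*$ and $e_i^*$ denote the coordinate functions on $U_-$ and $U_+$ with respect to the simple root directions, i.e. for $u_1=\prod x_{j_k}(t_k)$ a reduced factorization, $e_i^*(u_1)$ reads off the coefficient attached to the (unique, by the choice of reduced word) occurrence of the generator $x_i$, and similarly for $f_i^*$ on $\bar u_2$; concretely $e_i^*(u)$ can be extracted as a matrix entry of $u$ in a fundamental representation. For \eqref{eq:ei1first} the factorization \eqref{e:u2barfactb} is $\bar u_2 = y_1(a_1)\cdots y_{m-2}(a_{m-2}) y_m(d) y_{m-1}(c) y_{m-2}(b_{m-2})\cdots y_1(b_1)$, in which each simple index $i$ with $1\le i\le m-2$ occurs exactly twice (with parameters $a_i$ and $b_i$), while $m-1$ and $m$ occur once (with parameters $c$ and $d$). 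The additivity of the relevant coordinate under the product — which holds because, reading the matrix entry $E_{i,i+1}+E_{2m-i,2m-i+1}$ (resp.\ the $\mathfrak{so}$-analogue for $i=m,m-1$) on $\bar u_2$ in $V$, the contributions of the two $y_i$ factors simply add and no cross terms appear at that matrix position — yields $f_i^*(\bar u_2)=a_i+b_i$ for $i\le m-2$, $f_{m-1}^*(\bar u_2)=c$, $f_m^*(\bar u_2)=d$. This is essentially the same computation that underlies Lemma \ref{l:bislemma}, so I would cross-reference it.

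For \eqref{eq:ei2first} the key input is the defining equation of $Z_d$: the element $g=u_1 d\dot w_P\bar u_2$ must lie in $B_-\dot w_0$, equivalently $u_1$ is uniquely determined by $\bar u_2$ through the condition $u_1 d\dot w_P\bar u_2\in B_-\dot w_0$. I would first show $e_i^*(u_1)=0$ for $2\le i\le m$. This follows from a weight/support argument: the condition $g\in B_-\dot w_0$ together with $d\dot w_P$ having Weyl component $w_P$ forces $u_1$ to lie in $U_+\cap \dot w_P U_-^{(P)}\dot w_P^{-1}$ or a similar constraint coming from $w^P$ being the minimal coset representative — in particular $u_1$ can be taken in the one-parameter subgroup $U_+\cap \dot w_0 U_- \dot w_0^{-1}$ associated to the single positive root $\alpha_1$ contributing to $w^P$ outside $W_P$, so all simple-root coordinates $e_i^*(u_1)$ for $i\ge 2$ vanish. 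Concretely I would verify this on the level of the matrix representation: writing out $u_1 d\dot w_P\bar u_2$ and imposing lower-triangularity up to $\dot w_0$ kills the $x_i$-directions for $i\ge 2$.

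It then remains to compute the single surviving coordinate $e_1^*(u_1)$. I would do this by evaluating a suitable Plücker/matrix coefficient of $g=u_1d\dot w_P\bar u_2$ in $V$ (or in a fundamental representation $V_{\omega_j}$) in two ways: once using $g\in B_-\dot w_0$, which pins down the coefficient in terms of $q$ (recall $d=\omega_1^\vee(q)$, so $\alpha_1(d)=q$), and once by expanding $u_1 d\dot w_P\bar u_2$ with $e_1^*(u_1)$ as the unknown and using the factorized form of $\bar u_2$ together with Lemma \ref{l:bislemma} to identify the resulting expression. Equating the two gives
\[
e_1^*(u_1)=q\,\frac{a_1+b_1}{a_1\cdots a_{m-1}\,c\,d\,b_{m-1}\cdots b_1},
\]
where $a_{m-1}$ denotes $c$ and the product in the denominator is $\p_{2m-2}(\bar u_2)$ by Lemma \ref{l:bislemma} (so the formula reads $q(a_1+b_1)/\p_{2m-2}(\bar u_2)$, matching the $q\p_1/\p_{2m-2}$ term of $\W$ once $\p_1(\bar u_2)=a_1+b_1$ is used). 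The main obstacle I anticipate is making the extraction of $e_1^*(u_1)$ from the constraint $g\in B_-\dot w_0$ clean: one has to choose the right matrix coefficient (equivalently the right fundamental weight and Weyl translate) so that exactly one term involving the unknown survives, and to track the signs coming from the representatives $\dot s_i$, $\dot w_P$, $\dot w_0$ fixed in Section \ref{s:notationeven}. Once the bookkeeping of representatives and the identification with $\p_{2m-2}$ via Lemma \ref{l:bislemma} are in place, the identity drops out, and Proposition \ref{p:W1} follows by substituting \eqref{eq:ei1first} and \eqref{eq:ei2first} into the definition \eqref{e:F} of $\mathcal F$.
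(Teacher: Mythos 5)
Your overall strategy for \eqref{eq:ei2first} --- use the constraint $g=u_1 d\dot w_P\bar u_2\in B_-\dot w_0$ to trade the unknown $u_1$ for matrix coefficients of $\bar u_2$, then evaluate those on the factorized form \eqref{e:u2barfactb} --- is exactly the route the paper takes, and \eqref{eq:ei1first} is indeed immediate from the factorization. However, your argument for the vanishing $e_i^*(u_1)=0$, $2\le i\le m$, has a genuine gap. The subgroup $U_+\cap\dot w_0U_-\dot w_0^{-1}$ is all of $U_+$, so it imposes no constraint; and $u_1$ cannot "be taken in a one-parameter subgroup": given $\bar u_2$, the element $u_1$ is uniquely determined by the condition $u_1d\dot w_P\bar u_2\in B_-\dot w_0$, and as $\bar u_2$ varies these $u_1$ sweep out a $(2m-2)$-dimensional family (the Richardson variety has dimension $2m-2$), so there is no freedom to normalize $u_1$ into the root subgroup of $\alpha_1$. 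Moreover the canonical normal form for $B_+\dot w_PB_-$ puts $u_1$ in $U_+\cap\dot w_PU_-\dot w_P^{-1}$, whose simple roots are precisely $\alpha_2,\dots,\alpha_m$, so a support argument of the kind you propose points in the wrong direction. What is actually needed is a weight estimate: writing $e_i^*(u_1)$ as $\langle d\dot w_P\bar u_2\cdot v_{\omega_i}^+, e_i\cdot v_{\omega_i}^-\rangle/\langle d\dot w_P\bar u_2\cdot v_{\omega_i}^+, v_{\omega_i}^-\rangle$ (this is where $g\in B_-\dot w_0$ enters), the numerator pairs $\bar u_2\cdot v_{\omega_i}^+$ against a vector of weight $w_P^{-1}s_i(-\omega_i)$; since $\bar u_2\in B_+(\dot w^P)^{-1}B_+$, the vector $\bar u_2\cdot v_{\omega_i}^+$ only has components in weights at or above $w_P^{-1}(-\omega_i)$, and the inequality $\ell(w_P^{-1}s_i)>\ell(w_P^{-1})$ for $i\ge 2$ shows the target weight lies strictly below that, forcing the pairing to vanish. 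This is the step your sketch is missing and cannot be replaced by the subgroup argument you give.

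For $i=1$ your plan coincides with the paper's: the same ratio of matrix coefficients produces the factor $\alpha_1(d)=q$, the denominator $\langle\bar u_2\cdot v_{\omega_1}^+,v_{\omega_1}^-\rangle$ is the full product $a_1\cdots a_{m-2}cd\,b_{m-2}\cdots b_1=\p_{2m-2}(\bar u_2)$, and the numerator $\langle\bar u_2\cdot v_{\omega_1}^+,\dot w_P^{-1}e_1\cdot v_{\omega_1}^-\rangle$ equals $a_1+b_1$ because $w_P^{-1}s_1$ is the prefix of $(w^P)^{-1}$ omitting only the final $s_1$. You correctly anticipate that isolating this coefficient is the delicate point, but you leave it as an anticipated obstacle rather than resolving it; in the paper it is resolved precisely by the observation that the path from $v_{\omega_1}^+$ to the relevant weight space must use every factor of \eqref{e:u2barfactb} except one of the two $y_1$'s. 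So the proposal is the right skeleton, but both the vanishing statement and the $i=1$ evaluation need the weight-space arguments filled in, and the vanishing argument as written is incorrect.
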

 
\begin{proof}
	Equation \eqref{eq:ei1first} is obtained immediately from the definition of $\bar u_2$. For Equation \eqref{eq:ei2first}, notice that
	\begin{align*}
  			e_i^*(u_1) 	& = \frac{\langle u_1^{-1} \cdot v_{\omega_i}^- , e_i \cdot v_{\omega_i}^-\rangle}{\langle u_1^{-1} \cdot v_{\omega_i}^- , v_{\omega_i}^-\rangle} \\
	     				& = \frac{\langle d \dot w_P \bar u_2 \cdot v_{\omega_i}^+ , e_i \cdot v_{\omega_i}^-\rangle}{\langle d \dot w_P \bar u_2 \cdot v_{\omega_i}^+ , v_{\omega_i}^-\rangle}.
 \end{align*}
Assume $2 \leq i \leq m$. Then $e_i^*(u_1) = 0$ if and only if $\langle \bar u_2 \cdot v_{\omega_i}^+ , \dot w_P^{-1} e_i \cdot v_{\omega_i}^-\rangle = 0$. Now the vector $w_P^{-1} e_i \cdot v_{\omega_i}^-$ is in the $\mu$-weight space of the $i$-th fundamental representation, where $\mu = w_P^{-1} s_i(-\omega_i)$. Moreover, $\bar u_2 \in B_+ (\dot w^P)^{-1} B_+$, hence $\bar u_2 \cdot v_{\omega_i}^+$ can have non-zero components only down to the weight space of weight $(w^P)^{-1}(\omega_i) = w_P^{-1}(-\omega_i)$. Since $l(w_P^{-1} s_i) > l(w_P^{-1})$ for $2 \leq i \leq m$, this is higher than $\mu$, which proves that $e_i^*(u_1) = 0$.

Now assume $i=1$. We have
\begin{align*}
  	e_1^*(u_1) 	& = \frac{\langle d \dot w_P \bar u_2 \cdot v_{\omega_1}^+ , e_1 \cdot v_{\omega_1}^-\rangle}{\langle d \dot w_P \bar u_2 \cdot v_{\omega_1}^+ , v_{\omega_1}^-\rangle} \\
	     		& = (\omega_1 + \alpha_1 - \omega_1)(d) \frac{\langle \bar u_2 \cdot v_{\omega_1}^+ , \dot w_P^{-1} e_1 \cdot v_{\omega_1}^-\rangle}{\langle \bar u_2 \cdot v_{\omega_1}^+ , \dot w_P v_{\omega_1}^-\rangle} \\
             	& = q \frac{\langle \bar u_2 \cdot v_{\omega_1}^+ , \dot w_P^{-1} e_1 \cdot v_{\omega_1}^-\rangle}{\langle \bar u_2 \cdot v_{\omega_1}^+ , v_{\omega_1}^-\rangle}.
\end{align*}
First look at the denominator. The only way to go from the highest weight vector $v_{\omega_1}^+$ of the first fundamental representation to the lowest weight vector $v_{\omega_1}^-$ is to apply $g \in B_+ w B_+$ for $w \geq (w^P)^{-1}$. Since $\bar u_2 \in B_+ (\dot w^P)^{-1} B_+$, it follows that we need to take all factors of $\bar u_2$, and normalising $ v_{\omega_1}^-$ appropriately, we get
\[
 	 \langle \bar u_2 \cdot v_{\omega_1}^+ , v_{\omega_1}^-\rangle = a_1 \dots a_{m-1} c d b_{m-1} \dots b_1.
\]
Finally, we look at the numerator $\langle \bar u_2 \cdot v_{\omega_1}^+ , \dot w_P^{-1} e_1 \cdot v_{\omega_1}^-\rangle$. The vector $\dot w_P^{-1} e_1 \cdot v_{\omega_1}^-$ has weight 
\[
  	\mu' = \dot w_P^{-1} s_1 (-\omega_1) = \dot w_P^{-1} (-\epsilon_2) = \epsilon_2.
\]
Write $w_P^{-1} s_1$ as a prefix $w' = s_1 s_2 \dots s_{m-2} s_m s_{m-1} s_{m-2} \dots s_2$ of $(w^P)^{-1}$. We have $w' s_1 = (w^P)^{-1}$, hence the way from $v_{\omega_1}^+$ to $w' \cdot v_{\omega_1}^-$ is through $s_1$. From the factorization of $\bar u_2$ in \eqref{e:u2barfactb}, it follows that $\langle \bar u_2 \cdot v_{\omega_1}^+ , \dot w_P^{-1} e_1 \cdot v_{\omega_1}^-\rangle = a_1 + b_1$.
\end{proof}

\begin{proof}[Proof of Proposition~\ref{p:W1}]
Using the expression \eqref{e:F} of the superpotential from \cite{rietsch}, we immediately deduce expression for $\widetilde \W$ as a Laurent polynomial from Lemma~\ref{p:eis}.
\end{proof}

Next, using  Lemma~\ref{l:bislemma} and Proposition~\ref{p:W1}, we express $\widetilde \W$ in terms of Pl\"ucker coordinates and deduce the theorem.

\begin{proof}[Proof of Theorem~\ref{t:even}]
From Lemma~\ref{l:bislemma}, it follows that for $\bar u_2$ as in \eqref{e:u2barfactb}
\[
 	 \p_{\ell+1} (\bar u_2) \p_{2m-2-\ell} (\bar u_2)= (a_{\ell+1} + b_{\ell+1})(a_1 \dots a_\ell)^2 a_{\ell+1} \dots a_{m-2} c d b_{m-2} \dots b_{\ell+1}
\]
for $0 \leq \ell \leq m-3$.  We also get that $\p_k(\bar u_2) \p_{2m-2-k} (\bar u_2)$ is equal to
\begin{equation}\label{eq:sum} 
	  \begin{cases}
			a_1 \dots a_{m-2} cd b_{m-2} \dots b_1 &\mbox{if }k=0;\\
     		(a_1+b_1) a_1 \dots a_{m-2} cd b_{m-2} \dots b_2 &\mbox{if }k=1;\\
    		(a_k + b_k) (a_1 \dots a_{k-1})^2 a_k \dots a_{m-2} c d b_{m-2} \dots b_{k+1}
      		&\mbox{if }2 \leq k \leq m-3. \end{cases}
\end{equation} 
Using \eqref{eq:sum}, we find that most terms in $\delta_\ell(\bar u_2)=\sum_{k=0}^\ell (-1)^k \p_{\ell-k} (\bar u_2) \p_{2m-2+k-\ell}(\bar u_2)$ cancel, and
\[
 	 \delta_\ell(\bar u_2) = (a_1 \dots a_\ell)^2 a_{\ell+1} \dots a_{m-2} c d b_{m-2} \dots b_{\ell+1}.
\]
This proves that 
\[
  	\frac{\p_{\ell+1} \p_{2m-2-\ell}}{\delta_\ell} (\bar u_2) = a_{\ell+1} + b_{\ell+1} 
\]
for $0 \leq \ell \leq m-3$. Moreover:
\[
  	\frac{\p_m}{\p_{m-1}}(\bar u_2) = \frac{a_1 \dots a_{m-2} c d}{a_1 \dots a_{m-2} c} = d,
\]
and
\[
  	\frac{\p_m}{\p_{m-1}'}(\bar u_2) = \frac{a_1 \dots a_{m-2} c d}{a_1 \dots a_{m-2} d} = c.
\]
For the first and last terms, we obtain
\[
  	\frac{\p_1}{\p_0} (\bar u_2) = a_1 + b_1,
\]
and
\[
  	\frac{\p_1}{\p_{2m-2}} (\bar u_2) = \frac{a_1+b_1}{a_1 \dots a_{m-1} c d b_{m-1} \dots b_1}
 \]
as easy consequences of Lemma~\ref{l:bislemma}. Using Proposition~\ref{p:W1}, this proves that $\widetilde \W$ coincides with the definition of $\W$ from Equation~\eqref{eq:Weven}
\begin{equation*}
      \W = \frac{\p_{1}}{\p_0} + \sum_{\ell=1}^{m-3} \frac{\p_{\ell+1} \p_{2m-2-\ell}}{\delta_\ell} + \frac{\p_m}{\p_{m-1}} + \frac{\p_m}{\p_{m-1}'} + q\frac{\p_{1}}{\p_{2m-2}}.\qedhere
\end{equation*} 
\end{proof}

\subsection{Comparison with the Givental and Laurent polynomial mirrors for even quadrics}\label{comparison:even}

Let us recall the Laurent polynomial LG model of $\Q_{2m-2}$ from Equation \eqref{e:toricLG}
\[
	\LL = z_1 + \dots + z_{2m-3} + \frac{(z_{2m-2}+q)^2}{z_1 z_2 \dots z_{2m-2}},
\]
defined over the torus
\[
	\TT:=\left\{ (z_1,\dots,z_{2m-2}) \mid z_i \neq 0\quad \forall\; i\right\},
\] 
and the Givental LG model from Equation \eqref{e:giventalLG}
\[
	\G = \nu_1 + \dots + \nu_{2m-2},
\]
defined over the affine variety 
\[
	\VV=\left\{ (\nu_1,\dots,\nu_{2m}) \mid \nu_i \neq 0\; \forall\; i,\,\  \prod_{i=1}^{2m}\nu_i=q,\ \ \nu_{2m-1}+\nu_{2m}=1\right\}.
\]
These two LG models are related by a birational change of coordinates analogous to that of \cite[Rmk.~19]{przyjalkowski}, namely
\begin{align*}
	z_i = \begin{cases}
			\nu_{i+1} & \text{for $1 \leq i \leq 2m-3$;} \\
			q\frac{\nu_{2m-1}}{\nu_{2m}} & \text{for $i=2m-2$;} 
		  \end{cases}
\end{align*}
and conversely
\begin{align*}
	\nu_i = \begin{cases}
			\frac{(z_{2m-2}+q)^2}{z_1 \dots z_{2m-2}} & \text{for $i=1$;} \\	
			z_{i-1} & \text{for $2 \leq i \leq 2m-2$;} \\
			\frac{z_{2m-2}}{z_{2m-2}+q} & \text{for $i=2m-1$;} \\
			\frac{q}{z_{2m-2}+q} & \text{for $i=2m$.}
		  \end{cases}
\end{align*}
This change of variables defines an isomorphism
\[
	\TT \setminus \{ z_{2m-2}+q=0 \} \cong \VV
\]
which identifies the superpotentials $\LL$ and $\G$.

Let us now compare these two LG models with ours. Consider the change of coordinates
\begin{align*}
	z_i = \begin{cases}
			\frac{\p_i}{\p_{i-1}} & \text{for $1 \leq i \leq m-2$;} \\
			\frac{\p_{2m-3-i}\delta_{2m-5-i}}{\p_{2m-4-i}\delta_{2m-4-i}} & \text{for $m-1 \leq i \leq 2m-5$;} \\
			\frac{\p_m}{\p_{m-1}} & \text{for $i=2m-4$;} \\
			\frac{\p_m}{\p_{m-1}'} & \text{for $i=2m-3$;} \\
			q\frac{\delta_{m-3}}{\delta_{m-2}} & \text{for $i=2m-2$.} 
		  \end{cases}
\end{align*}
It is well-defined on the following intersection $\tilde T$ of two cluster tori
\[
	\tilde T:=\{x \in \XXcan \ \vert \  \p_i(x) \neq 0 \text{ for all } 0 \leq i \leq m-2 \text{ and } \p_m(x) \neq 0 \}.
\] 
The inverse change of coordinates is given by
\begin{align*}
	\p_i = \begin{cases}
			z_1 \dots z_i & \text{for $1 \leq i \leq m-2$;} \\
			q z_1 \dots z_{m-2} \frac{z_{2m-3}}{z_{2m-2}} & \text{for $i=m-1$;} \\
			q z_1 \dots z_{m-2} \frac{z_{2m-4}z_{2m-3}}{z_{2m-2}} & \text{for $i=m$;} \\
			q z_1 \dots z_{i-2} \left( 1+ \frac{z_{2m-1-i}}{z_{i-2}}\right) \frac{z_{2m-4}z_{2m-3}}{z_{2m-2}+q} & \text{for $m+1 \leq i \leq 2m-3$;} \\
			q z_1 \dots z_{2m-3} \frac{z_1}{z_{2m-2}+q} & \text{for $i=2m-2$.} 
		  \end{cases}
\end{align*}
and $\p_{m-1}' = q z_1 \dots z_{m-2} \frac{z_{2m-4}}{z_{2m-2}}$. Moreover, we have
\begin{align*}
	\delta_j = \begin{cases}
			\frac{z_2 \dots z_{j+1}}{z_{2m-4-j} \dots z_{2m-5}} & \text{for $1 \leq j \leq m-3$;} \\
			\frac{q}{z_{2m-2}} \cdot \frac{z_2 \dots z_{m-2}}{z_{m-1} \dots z_{2m-5}} & \text{for $i=2m-2$.} 
		  \end{cases}
\end{align*}
We see that the inverse change of coordinates is well-defined over $\TT \setminus \{ z_{2m-2}+q=0 \}$, which is isomorphic to the Givental mirror manifold $\VV$. Hence we obtain an isomorphism
\[
	\XXcan \supset \tilde T \cong \TT \setminus \{ z_{2m-2}+q=0 \} \cong \VV \subset \TT
\]
which identifies the (restrictions of) the superpotentials $\W$ and $\LL$. It also identifies the form $\omegaGiv$ with $\omegaCan$. This proves Proposition \ref{prop:Giv2W} from the introduction in the case of even quadrics.

\subsection{The critical points of the canonical mirror}

Since the canonical mirror $(\XXcan,\W)$ is isomorphic to the Lie-theoretic mirror $(\RR,\F)$, it follows from \cite{rietsch} that $\W$ has the `correct' number of critical points on $\XXcan$, that is, $\dim H^*(\Q_{2m-2},\C)=2m$. Here we give explicit expression for the critical points, and compare with the critical points of the classical mirrors $(\VV,\G)$ and $(\TT,\LL)$.

\begin{prop}
\label{p:critical:even}
	The critical points of the superpotential $\W$ on $\XXcan$ are given by
	\begin{align*}
		\p_j = 	\begin{cases}
					\zeta^j & \text{if $1 \leq j \leq m-2$ ;} \\
					\frac{1}{2}\zeta^j & \text{if $m-1 \leq j \leq 2m-3$ ;} \\
					q & \text{if $j=2m-2$},
				\end{cases}
	\end{align*}
	and $\p_{m-1}'=\frac{1}{2}\zeta^{m-1}$, where $\zeta$ is a primitive $(2m-2)$-st root of $4q$. The associated critical value is $(2m-2)\zeta$.
	Moreover there are two extra critical points given by $\p_1 = \dots =\p_{m-2} = \p_m = \p_{2m-3} = 0$, $\p_{m-1}=-\p_{m-1}'=\pm\sqrt{q}$, $\p_{2m-2}=-q$ with corresponding critical value $0$. These two critical points do not belong to $\TT$, $\VV$ or $\XXlus$.
\end{prop}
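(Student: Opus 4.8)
The plan is to split the critical locus of $\W$ into the critical points lying on the Lusztig torus $\XXlus$, to be found via the Laurent polynomial $\HH$ of Proposition~\ref{p:W1}, and the two remaining ones, to be pinned down by a Lagrange-multiplier computation on the dual quadric together with the fact (Proposition~\ref{p:critical}, via \cite{rietsch} and Theorem~\ref{t:even}) that $\W$ has exactly $\dim H^*(\Q_{2m-2},\C)=2m$ non-degenerate critical points on $\XXcan$. At a critical point of $\HH$, write $\zeta$ for the value of the last monomial $q(a_1+b_1)/(a_1\cdots a_{m-2}c\,d\,b_{m-2}\cdots b_1)$. For $2\le i\le m-2$ the equations $\partial_{a_i}\HH=\partial_{b_i}\HH=\partial_c\HH=\partial_d\HH=0$ give $a_i=b_i=c=d=\zeta$; subtracting the $a_1$- and $b_1$-equations forces $a_1=b_1$, whence $a_1=b_1=\zeta/2$; and feeding these back into the monomial yields $\zeta^{2m-2}=4q$. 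Thus $\HH$ has exactly $2m-2$ critical points, one for each $(2m-2)$-nd root $\zeta$ of $4q$, with critical value $(2m-2)\zeta$ by substitution, and Lemma~\ref{l:bislemma} translates the coordinates into $\p_j=a_1\cdots a_{j-1}(a_j+b_j)=\zeta^j$ for $1\le j\le m-2$, $\p_{m-1}=\p_{m-1}'=a_1\cdots a_{m-2}c=\frac12\zeta^{m-1}$, $\p_k=\frac12\zeta^k$ for $m\le k\le 2m-3$, and $\p_{2m-2}=a_1\cdots a_{m-2}c\,d\,b_{m-2}\cdots b_1=\frac14\zeta^{2m-2}=q$, which is the first family in the statement.

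Each of these $2m-2$ points has $\p_m=a_1\cdots a_{m-2}cd\ne 0$, so exactly two critical points lie off $\XXlus$, and to produce them I would set $\p_0=1$ and apply Lagrange multipliers to $\W$ on the dual quadric, whose affine equation is $\p_{m-1}\p_{m-1}'=\delta_{m-2}$ with $\delta_{m-2}:=\sum_{k=0}^{m-2}(-1)^k\p_{m-2-k}\p_{m+k}$ (extending \eqref{eq:delta2} to $\ell=m-2$). A direct computation gives $\partial_{\p_m}\W=\frac{1}{\p_{m-1}}+\frac{1}{\p_{m-1}'}$, $\partial_{\p_{m-1}}\W=-\frac{\p_m}{\p_{m-1}^2}$, $\partial_{\p_{m-1}'}\W=-\frac{\p_m}{(\p_{m-1}')^2}$, with $\partial_{\p_m}Q=-\p_{m-2}$, $\partial_{\p_{m-1}}Q=\p_{m-1}'$, $\partial_{\p_{m-1}'}Q=\p_{m-1}$; since $\p_{m-1},\p_{m-1}'$ are nonzero on $\XXcan$, the $\p_{m-1}$- and $\p_{m-1}'$-equations force either $\p_{m-1}=\p_{m-1}'$ (the branch that contains the torus family) or $\p_m=0$ and vanishing multiplier, so that $\W$ has an unconstrained critical point there. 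On the second branch, a cascade through the remaining equations — parallel to the one in the proof of Proposition~\ref{p:critical-odd}, using the identities $\delta_\ell+\delta_{\ell-1}=\p_\ell\p_{2m-2-\ell}$ and the non-vanishing of the $\delta_\ell$ — successively forces $\p_j=0$ for $1\le j\le m-2$ and for $m\le j\le 2m-3$, then $\p_{2m-2}=-q$ (from $\partial_{\p_1}\W=0$) and $\p_{m-1}'=-\p_{m-1}$ (from $\partial_{\p_m}\W=0$); the quadric equation then determines $\p_{m-1}$ up to sign, giving the two extra critical points, at which $\W=0$ by inspection.

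It remains to see the two extra points lie in none of $\TT,\VV,\XXlus$: each has $\p_m=0$, whereas $\p_m=a_1\cdots a_{m-2}cd\ne 0$ on $\XXlus$, and by Section~\ref{comparison:even} the images in $\XXcan$ of $\VV$ and of $\TT\setminus\{z_{2m-2}+q=0\}$ lie in $\{\p_m\ne 0\}$ (indeed $z_{2m-4}=\p_m/\p_{m-1}$). The main obstacle is the cascade in the second paragraph: as in the odd case it must be run through the equations in the right order, and the short cases $m=3,4$ need a separate (easy) check. This can be sidestepped entirely, however, by only verifying that the two points in the statement satisfy the Lagrange-multiplier equations and lie in $\XXcan$, and then invoking $2m=(2m-2)+2$ to conclude that these, together with the torus family, exhaust the critical locus.
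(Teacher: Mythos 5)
Your strategy is sound and genuinely different from the paper's, which disposes of this proposition in one line (``very similar to Proposition~\ref{p:critical-odd}''), i.e.\ by a single direct computation of all partial derivatives of $\W$ in Pl\"ucker coordinates followed by a case split. You instead split by locus: the $2m-2$ generic critical points are found on the torus $\XXlus$ by solving $d\HH=0$ --- a much lighter computation, and your translation back through Lemma~\ref{l:bislemma} is correct --- while the two exceptional points come from a Lagrange-multiplier analysis of the branch $\p_m=0$. What the paper's route buys is uniformity in $q$ and automatic completeness; what yours buys is transparency for the main family and a clean conceptual source for the dichotomy $\p_m=0$ versus $\p_{m-1}=\p_{m-1}'$.

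Two seams need attention. First, completeness: your dichotomy shows that a critical point with $\p_m\neq 0$ satisfies $\p_{m-1}=\p_{m-1}'$, but that branch is not a priori contained in $\XXlus$ (the torus is cut out by more conditions than $\p_m\neq0$), so the cascade alone does not show that the critical points on that branch are exhausted by the $\zeta$-family. You must therefore either run the Pl\"ucker-coordinate analysis on that branch as well (which is what the paper's omitted computation does), or lean on the count of $2m$ critical points coming from \cite{rietsch} and Theorem~\ref{t:even}; the latter is legitimate, but per the introduction that count is asserted at generic $q$, so the counting shortcut proves the statement only for generic $q$. Second, a sign: at the exceptional points the quadric relation \eqref{e:equationQuadric} forces $\p_{m-1}\p_{m-1}'=(-1)^{m+1}q$, hence $\p_{m-1}^2=(-1)^m q$ once $\p_{m-1}'=-\p_{m-1}$; for $Q_4$ ($m=3$) one gets $\p_2\p_2'=q$ and $\p_2=\pm\sqrt{-q}$, not $\pm\sqrt q$. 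So the ``verify that the two stated points satisfy the equations'' shortcut fails as literally stated when $m$ is odd --- those points do not lie on the dual quadric --- whereas your cascade, carried out, produces the correct points. This appears to be a sign slip in the proposition itself, but your proof has to engage with it rather than take the stated values on faith.
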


\begin{proof}
The proof is very similar to that of Proposition \ref{p:critical-odd} and we don't repeat it here.
\end{proof}

\section{The quiver mirrors $(\XXlus,\HH)$}\label{sec:quiver}

In this section we will explain how our quiver superpotential $\HH$ for $Q_N$ can be read off from a certain quiver, justifying its name. This is analogous to the type $A$ complete flag variety case
\cite{Givental:QToda} and partial flag variety case \cite{BCFKvSGr,BCFKvSPF}, where one can also read off Laurent polynomial superpotentials from quivers.

We begin by explaining the \cite{BCFKvSGr} formula for the Grassmannian $Gr_2(4)$. Note that since $Gr_2(4)$ is defined by a single (quadratic) Pl\"ucker relation, it is isomorphic to the quadric $Q_4$. 

For $Gr_2(4)$ the quiver from \cite{BCFKvSGr} is shown in Figure~\ref{fig:Gr24}. The Laurent polynomial superpotential can be read off easily. There are two versions. In the left hand picture the coordinates $t_{ij}$ of the torus $(\C^*)^4$ are in bijection with vertices of the quiver. To each  arrow we associate a Laurent monomial by taking the coordinate at the head of the arrow divided by the coordinate at the tail. The Laurent polynomial corresponding to the quiver is the sum of all of the Laurent monomials associated to the arrows. 

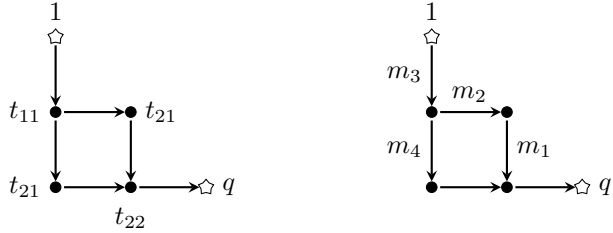
\begin{figure}[htbp]
\begin{tikzpicture}
	\tikzstyle{every node}=[draw,circle,fill=black,minimum size=4pt,
                            inner sep=0pt,label distance=2pt]
                            
	\node[star,fill=white,label=above:{1},minimum size=6pt] (1) at (0,0) {};
	\node[label=left:{$t_{11}$}] (11) at (0,-1) {};
	\node[label=left:{$t_{21}$}] (21) at (0,-2) {};
	\node[label=right:{$t_{21}$}] (12) at (1,-1) {};
	\node[label=below:{$t_{22}$}] (22) at (1,-2) {};
	\node[star,fill=white,label=right:{$q$},minimum size=6pt] (q) at (2,-2) {};

	\draw[->,thick,>=stealth,shorten <=1pt]
	(1) edge (11)
	(11) edge (12)
	(11) edge (21)
	(12) edge (22)
	(21) edge (22)
	(22) edge (q);
	
	\node[star,fill=white,label=above:{1},minimum size=6pt] (1p) at (5,0) {};
	\node (11p) at (5,-1) {};
	\node (21p) at (5,-2) {};
	\node (12p) at (6,-1) {};
	\node (22p) at (6,-2) {};
	\node[star,fill=white,label=right:{$q$},minimum size=6pt] (qp) at (7,-2) {};

	\path[->,thick,>=stealth,every node/.style={font=\normalsize},shorten <=1pt]
	(1p) edge node[left] {$m_3$} (11p)
	(11p) edge node[above] {$m_2$} (12p)
	(11p) edge node[left] {$m_4$} (21p)
	(12p) edge node[right] {$m_1$} (22p)
	(21p) edge (22p)
	(22p) edge (qp);
\end{tikzpicture}
	\caption{The quiver for $Gr_2(4)$ and two choices of coordinates.}
	\label{fig:Gr24}
\end{figure}

The labels $m_i$ of the arrows in the right hand version are another natural choice of coordinates on the same torus. Indeed these are coordinates related to factorizations into one-parameter subgroups of Lie-theoretic mirrors used in \cite{Lusztig94}, compare \cite{MR}. We suppose the remaining arrows are labelled in such a way that the square commutes and any path leading from $1$ to $q$ has labels whose product equals $q$. These are Laurent monomials in the variables $m_i$ (depending on $q$). Then the Laurent polynomial superpotential is obtained in \cite{BCFKvSGr} as the sum of the labels of all of the arrows of the quiver. In the case of $Gr_2(4)$ it is
\begin{equation}\label{e:Gr24}
	m_1+m_2+m_3+m_4+\frac{m_1 m_2}{m_4}+q\frac{1}{m_1 m_2 m_3}.
\end{equation}
Since $Gr_2(4)$ is isomorphic to $Q_4$, this suggests it should be related to the superpotential $(\XXlus, \HH)$ from \eqref{e:quiverMirrorEven} for $Q_4$,
\begin{equation}\label{e:Q4}
	a_1+c+d+b_1+q \frac{a_1+b_1}{a_1 b_1cd}.
\end{equation}
There is indeed a toric change of coordinates turning Equation~\eqref{e:Gr24} into Equation~\eqref{e:Q4}:
\[
	m_1 \mapsto \frac{q}{a_1 c d};\ m_2 \mapsto a_1;\ m_3 \mapsto c;\ m_4 \mapsto b_1.
\]
Note that the torus of the other Laurent polynomial mirror $(\TT,\LL)$ for $Q_4$ is a different one, as seen in Section~\ref{comparison:even}.

The superpotential \eqref{e:Q4} also comes from a quiver, see Figure~\ref{fig:quiverQ4}.  
\begin{figure}[htbp]
\begin{tikzpicture}
	\tikzstyle{every node}=[draw,circle,fill=black,minimum size=4pt,
                            inner sep=0pt,label distance=2pt]
                            
	\node[star,fill=white,label=above:{1},minimum size=6pt] (1) at (0,0) {};
	\node (2) at (0,-1) {};
	\node (3) at (0,-2) {};
	\node (4) at (0,-3) {};
	\node (5) at (1,-3) {};
	\node[star,fill=white,label=right:{$q$},minimum size=6pt] (q) at (1,-4) {};

	\path[->,thick,>=stealth,every node/.style={font=\normalsize},shorten <=1pt]
	(1) edge node[left] {$d$} (2)
	(2) edge node[left] {$c$} (3)
	(3) edge node[left] {$b_1$} (4)
	(3) edge node[right] {$a_1$} (5)
	(4) edge (q)
	(5) edge (q);
\end{tikzpicture}
	\caption{The quiver for $Q_4$.}
	\label{fig:quiverQ4}
\end{figure}
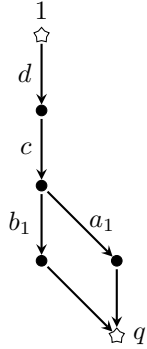
This generalises to all quadrics $Q_N$. Indeed our Laurent polynomial superpotentials \eqref{e:quiverMirrorOdd} and \eqref{e:quiverMirrorEven} for $Q_N$ can be described using quivers as in Figure \ref{fig:quiverQN}. The factorisation of $\bar u_2$ from \eqref{e:u2barfactb} can also be naturally read off the quiver (compare with \cite[Section~5.3]{MR}). This goes as follows.

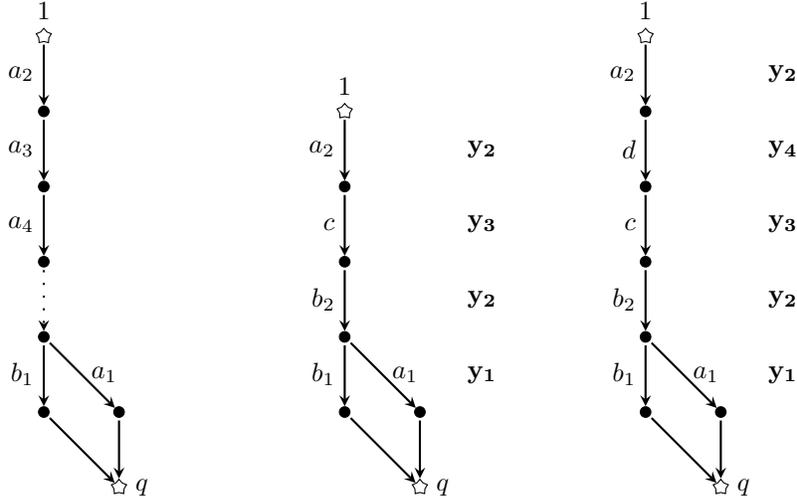
\begin{figure}[htbp]
\begin{tikzpicture}
	\tikzstyle{every node}=[draw,circle,fill=black,minimum size=4pt,
                            inner sep=0pt,label distance=2pt]
                            
	\node[star,fill=white,label=above:{1},minimum size=6pt] (1) at (0,0) {};
	\node (2) at (0,-1) {};
	\node (3) at (0,-2) {};
	\node (4) at (0,-3) {};
	\node (5) at (0,-4) {};
	\node (6) at (0,-5) {};
	\node (7) at (1,-5) {};
	\node[star,fill=white,label=right:{$q$},minimum size=6pt] (q) at (1,-6) {};

	\path[->,thick,>=stealth,every node/.style={font=\normalsize},shorten <=1pt]
	(1) edge node[left] {$a_2$} (2)
	(2) edge node[left] {$a_3$} (3)
	(3) edge node[left] {$a_4$} (4)
	(4) edge[loosely dotted] (5)
	(5) edge node[left] {$b_1$} (6)
	(5) edge node[right] {$a_1$} (7)
	(6) edge (q)
	(7) edge (q);
	
	\node[star,fill=white,label=above:{1},minimum size=6pt] (1a) at (4,-1) {};
	\node (2a) at (4,-2) {};
	\node (3a) at (4,-3) {};
	\node (4a) at (4,-4) {};
	\node (5a) at (4,-5) {};
	\node (6a) at (5,-5) {};
	\node[star,fill=white,label=right:{$q$},minimum size=6pt] (qa) at (5,-6) {};

	\path[->,thick,>=stealth,every node/.style={font=\normalsize},shorten <=1pt]
	(1a) edge node[left] {$a_2$} node[label={[label distance=1.5cm]right:$\mathbf{y_2}$}] {} (2a)
	(2a) edge node[left] {$c$} node[label={[label distance=1.5cm]right:$\mathbf{y_3}$}] {} (3a)
	(3a) edge node[left] {$b_2$} node[label={[label distance=1.5cm]right:$\mathbf{y_2}$}] {} (4a)
	(4a) edge node[left] {$b_1$} (5a)
	(4a) edge node[right] {$a_1$} node[label={[label distance=1cm]right:$\mathbf{y_1}$}] {} (6a)
	(5a) edge (qa)
	(6a) edge (qa);
	
	\node[star,fill=white,label=above:{1},minimum size=6pt] (1b) at (8,0) {};
	\node (2b) at (8,-1) {};
	\node (3b) at (8,-2) {};
	\node (4b) at (8,-3) {};
	\node (5b) at (8,-4) {};
	\node (6b) at (8,-5) {};
	\node (7b) at (9,-5) {};
	\node[star,fill=white,label=right:{$q$},minimum size=6pt] (qb) at (9,-6) {};

	\path[->,thick,>=stealth,every node/.style={font=\normalsize},shorten <=1pt]
	(1b) edge node[left] {$a_2$} node[label={[label distance=1.5cm]right:$\mathbf{y_2}$}] {} (2b)
	(2b) edge node[left] {$d$} node[label={[label distance=1.5cm]right:$\mathbf{y_4}$}] {} (3b)
	(3b) edge node[left] {$c$} node[label={[label distance=1.5cm]right:$\mathbf{y_3}$}] {} (4b)
	(4b) edge node[left] {$b_2$} node[label={[label distance=1.5cm]right:$\mathbf{y_2}$}] {} (5b)
	(5b) edge node[left] {$b_1$} (6b)
	(5b) edge node[right] {$a_1$} node[label={[label distance=1cm]right:$\mathbf{y_1}$}] {}(7b)
	(6b) edge (qb)
	(7b) edge (qb);
	
\end{tikzpicture}
	\caption{The quiver for $Q_N$, and the labelled quivers for $Q_5$ and $Q_6$.}
	\label{fig:quiverQN}
\end{figure}

Let the $N-2$ vertical arrows on the left-hand edge be labelled from top to bottom by $a_2, a_3, \dots, a_{m-1}, c, b_{m-1}, \dots, b_2$ for odd quadrics $Q_{2m-1}$, and by $a_2, a_3, \dots, a_{m-2},$ $d, c, b_{m-2}, \dots, b_2$ for even quadrics $Q_{2m-2}$. The diagonal arrow with the same tail as $b_1$ is labelled by $a_1$. The arrows below are not labelled. The labelled arrows can be organized into `levels' starting with $a_1, b_1$ at the bottom level. The levels are associated to the one-parameter subgroups $y_i$ (of $\PSO_{2m}$ for $X=Q_{2m-2}$, respectively of $\PSp_{2m}$ for $X=Q_{2m-1}$) as shown in the $Q_5$ and $Q_6$ examples. Reading off column by column from right to left and from top to bottom we recover the factorization \eqref{e:u2barfactb}.

\begin{remark}
	It is interesting to note that our quivers (restricted to the vertices which are not labelled  by
$q$) are orientations of type $D$ Dynkin diagrams with a special vertex added at either end. So we have three ways to associate a Dynkin diagram to a quadric: the type of its symmetry group, the type of the cluster algebra associated to the coordinate ring of its mirror, and the type of the quiver defining
its superpotential. See Table \ref{table:Dynkin}.
\end{remark}

\begin{center}
	\renewcommand{\arraystretch}{1.2}
	\begin{table}[h]
		\begin{tabular}{| l | l || l | l |}
		\hline
		Quadric & Symmetry group & Cluster type of mirror & Superpotential Quiver \\
		\hline
		\hline
		$Q_3$ & $B_2$ & $A_1$ & $D_3$ \\ \hline
		$Q_4$ & $D_3$ & $A_1$ & $D_4$ \\ \hline
		$Q_5$ & $B_3$ & $A_1^2$ & $D_5$ \\ \hline
		$Q_6$ & $D_4$ & $A_1^2$ & $D_6$ \\ \hline
		$Q_7$ & $B_4$ & $A_1^3$ & $D_7$ \\ \hline
		$\vdots$ & $\vdots$ & $\vdots$ & $\vdots$ \\ \hline
		\end{tabular}
		\vspace{.2cm}
		\caption{Dynkin diagrams associated to quadrics}
		\label{table:Dynkin}
	\end{table}
\end{center}

\section{The A-model and B-model connections}\label{sec:connections}

Our expression for the canonical LG model $\W$ in terms of homogeneous coordinates coming from $\XXcan\subset \P(H^*(X,\C)^*)$ makes it possible to compare in a very natural way the (small) Dubrovin connection on the $A$ side and the Gauss-Manin connection on the $B$ side. We recall first the relevant definitions on the $A$ side.

Let $X=\Q_N$. Consider  $H^*(X,\C[\hbar, q])$ as a space of sections on a trivial bundle with fiber $H^*(X,\C)$, over the base $\C_{\hbar} \times \C_{q}$, where the $\hbar$ and $q$ are the coordinates.
Let $\operatorname{Gr}$ be the operator on sections defined on the fibres as the `grading operator' $H^*(X,\C)\to H^*(X,\C)$ which multiplies $\sigma\in H^{2k}(X,\C)$ by $k$. We define the Dubrovin connection by
\begin{eqnarray}
	{}^A\nabla_{q\partial_q} S&:=& q\frac{\partial S}{\partial q} + \frac{1}{\hbar} \sigma_{1}\star_{q}S, \\
	{}^A \nabla_{\hbar\partial_\hbar} S&:=& \hbar\frac{\partial S}{\partial\hbar} - \frac{1}{\hbar} c_1(TX)\star_{q}S+ \operatorname{Gr} (S),
\end{eqnarray}
following the conventions of Iritani~\cite{iritani}, where $\star_q$ denotes the quantum cup product in the quantum cohomology, and $S$ may be any meromorphic or formal section of the above vector bundle. The above defines a meromorphic connection which is flat, see also \cite{Dub:2DTFT,Givental:EqGW,CoxKatz}. It therefore turns $H^*(X,\C[\hbar^{\pm 1}, q^{\pm 1}])$ into a $D$-module for $\C[\hbar^{\pm 1}, q^{\pm 1}]\langle\partial_\hbar,\partial_q\rangle$, which we will call  the \emph{$A$-model $D$-module} and denote by $M_A$.  Explicitly 
\begin{equation}\label{e:MA}
M_A:=H^*(X,\C[\hbar^{\pm 1}, q^{\pm 1}]),  \text{ with }  \partial_\hbar \sigma := {}^A\nabla_{\partial_\hbar}\sigma \text{ and } \partial_q \sigma := {}^A\nabla_{\partial_q}\sigma.
\end{equation}
This is the  $D$-module we consider on the $A$-model side.

We now define the $D$-module $M_B$. Let $\Omega^k(\XXcan)$ denote the space of all algebraic $k$-forms on $\XXcan$.

\begin{defn}\label{d:GM}
	Define the $\C[\hbar,q]$-module
	\[
		G_0^{\W}:=\Omega^n(\XXcan)[\hbar,q]/( \hbar d +  d\W\wedge - ) \Omega^{n-1}(\XXcan)[\hbar,q].
	\]
	It has a meromorphic (Gauss-Manin) connection given by
	\begin{eqnarray}
		{}^B\nabla_{q\partial_{q}} [\alpha]&=&q\frac \partial{\partial q} [\alpha] + \frac{1}{\hbar}\left[q\frac{\partial \W}{\partial q}\alpha\right],\\
		{}^B\nabla_{\hbar\partial_\hbar} [\alpha]&=&\hbar \frac\partial{\partial \hbar} [\alpha] -\frac{1}{\hbar} [ \W \alpha].
	\end{eqnarray}
	Let $M_B=G_0^{\W}\otimes_{\C[\hbar,q]}\C[\hbar^{\pm 1}, q^{\pm 1}]$. We view $M_B$ as a $\C[\hbar^{\pm 1},q^{\pm 1}]\langle\partial_\hbar,\partial_q\rangle$-module with $\partial_q$ acting by ${}^B\nabla_{\partial_{q}}$ and $\partial_{\hbar}$ acting by ${}^B\nabla_{\partial_\hbar}$. 
\end{defn}
On the $A$-model side a special role is played by the element $1\in M_A$ corresponding to the identity in $H^*(X,\C)$. For the $B$-model there is also a distinguished element. Recall that $\XXcan$ is  the complement of an anticanonical divisor in $\X$. Therefore we saw that there is an up to scalar unique non-vanishing logarithmic $N$-form on $\XXcan$ which we called $\omegaCan$ (see Equations \eqref{e:omegaCanOdd} and \eqref{e:omegaCanEven}). This is the same form as the one appearing in \cite[Lemma~5.14]{GHK11}, and it also agrees with the one from \cite{rietsch} after the isomorphism of $\XXcan$ with $\RR$. It determines an element $[\omegaCan]$ in $M_B$.

\subsection{The case of odd-dimensional quadrics}

For odd-dimensional quadrics we recall the isomorphism between the $D$-modules on the two sides, proved using results from \cite{GS}. 

\begin{theorem}[{\cite[Corollary~13]{PR2}}]\label{t:connections-odd}
 	For $X=\Q_{2m-1}$ with  its mirror LG-model $(\XX_{2m-1}, \W)$ from Theorem~\ref{t:odd-quadric}, the map
	\[
		\begin{array}{ccc}
 			M_A&\to& M_B \\
			\sigma_i & \mapsto & [p_i \omegaCan]
		\end{array}
	\]
	defines an isomorphism of $D$-modules. \end{theorem}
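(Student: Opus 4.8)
\textbf{Proof plan for Theorem~\ref{t:connections-odd}.}
The plan is to reduce the statement to results that are already in the literature, and then to observe that the stated map is the natural one under the dictionary being set up in this section. First I would recall from \cite{GS} (Section~4 and Appendix~A) that for the odd quadric $\Q_{2m-1}$ the Gorbounov--Smirnov compactification of the Givental mirror carries a Gauss--Manin system which is isomorphic, as a $D$-module, to the $A$-model $D$-module $M_A$ of $\Q_{2m-1}$; this is the hard analytic input and I would not reprove it. Next I would invoke the comparison result of \cite{PR2} (recalled here as Theorem~\ref{t:odd-quadric} and in Section~\ref{comparison:odd}), which gives an isomorphism of LG models identifying the Gorbounov--Smirnov mirror, the Laurent polynomial mirror $(\TT,\LL)$, and the canonical mirror $(\XXcan,\W)$, together with their volume forms. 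Transporting the Gauss--Manin system of \cite{GS} across this isomorphism yields an isomorphism $M_A\xrightarrow{\sim} M_B$, where $M_B=G_0^{\W}\otimes\C[\hbar^{\pm1},q^{\pm1}]$ is the Gauss--Manin system of $(\XXcan,\W,\omegaCan)$ from Definition~\ref{d:GM}. The content of \cite[Corollary~13]{PR2} is that, written in the canonical coordinates $\p_0,\dots,\p_{2m-1}$, this isomorphism is as simple as possible.

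The key remaining point is to pin down \emph{which} isomorphism, i.e.\ to check it sends $\sigma_i\mapsto[\p_i\omegaCan]$. Here I would argue as follows. Both $M_A$ and $M_B$ are free of rank $2m=\dim H^*(\Q_{2m-1},\C)$ over $\C[\hbar^{\pm1},q^{\pm1}]$, and on each side there is a distinguished cyclic generator: $1\in M_A$, and $[\omegaCan]\in M_B$. Any $D$-module isomorphism is determined by where it sends the cyclic generator, so it suffices to check (a) that $1\mapsto[\omegaCan]$, which is forced by the normalisation of $\omegaCan$ and matches the identification $V=H^*(\Q_{2m-1},\C)$ via geometric Satake with $v_{2m}=\sigma_0$; and (b) that applying the appropriate differential operators — concretely, iterating $\hbar\,{}^B\nabla_{q\partial_q}$, which by Definition~\ref{d:GM} amounts to multiplication by $q\partial_q\W$ modulo the submodule $(\hbar d+d\W\wedge-)\Omega^{2m-2}$ — produces exactly the classes $[\p_i\omegaCan]$, matching the $A$-side where $\hbar\,{}^A\nabla_{q\partial_q}$ is cup product with $\sigma_1$ and $\sigma_1^{\star i}$-type expressions recover the Schubert basis in the (inverted-$q$) quantum cohomology. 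This last verification is where the explicit form of $\W$ in equation~\eqref{eq:W2}, the definition~\eqref{eq:delta} of the $\delta_\ell$, and the relations $\delta_{j-1}+\delta_{j-2}=\p_{j-1}\p_{2m-j}$ already used in Section~\ref{comparison:odd} get used: one checks that the "Jacobi-ring-type" relations among the $\p_i$ modulo $d\W$ coincide with the quantum cohomology relations of $\Q_{2m-1}$, so that the cyclic structures match up.

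The main obstacle I anticipate is bookkeeping rather than conceptual: making the three-way identification of mirrors from \cite{GS} and \cite{PR2} precise enough that the Gauss--Manin \emph{connection} (not just the underlying module) is seen to be transported correctly, including the $\hbar$-direction ${}^B\nabla_{\hbar\partial_\hbar}$, which involves multiplication by $\W$ itself and the grading/Euler data. One has to be careful that the volume forms are matched on the nose (they are only "suitably normalized" in Proposition~\ref{prop:Giv2W}), since a scalar or $q$-power discrepancy in $\omegaCan$ would rescale the images $[\p_i\omegaCan]$; tracking this normalisation through the change of variables of Section~\ref{comparison:odd} is the delicate step. Once that is done, the identification on basis elements $\sigma_i\leftrightarrow[\p_i\omegaCan]$ follows because, under geometric Satake, $\p_i$ is precisely the coordinate dual to $\sigma_i$, and the Gauss--Manin connection acts on $[\p_i\omegaCan]$ by the same matrices (in this basis) as the Dubrovin connection acts on $\sigma_i$ — this being exactly the computation that Theorem~\ref{t:connections-even} will carry out in detail in the even case, so in the odd case one simply cites \cite[Corollary~13]{PR2}.
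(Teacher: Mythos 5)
Your proposal is correct and follows essentially the same route the paper indicates: the paper gives no proof of this statement, recalling it from \cite[Corollary~13]{PR2} as a consequence of \cite[Section 4 \& Appendix A]{GS} combined with the comparison isomorphism of Theorem~\ref{t:odd-quadric}, which is exactly the reduction you propose. Your strategy for pinning down the explicit form $\sigma_i\mapsto[\p_i\omegaCan]$ — verifying that $q\partial_q\W$ acts on the classes $[\p_i\omegaCan]$ by the quantum Chevalley matrices modulo $(\hbar d+d\W\wedge-)$ — is precisely the computation the paper carries out in detail for the even case in the proof of Theorem~\ref{t:connections-even}.
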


\subsection{The case of even-dimensional quadrics}

For even quadrics $Q_{2m-2}$ we prove the  following.

\begin{theorem}\label{t:connections-even}
 	For $X=\Q_{2m-2}$ and the canonical mirror $(\XX_{2m-2}, \W)$, see \eqref{eq:Weven}, the map
	\[
		\begin{array}{clcl}
			\Psi:& M_A&\to& M_B \\
			&\sigma_i & \mapsto & [p_i \omegaCan] \\
			&\sigma_{m-1}' & \mapsto & [p_{m-1}' \omegaCan]
		\end{array}
	\]
	defines an injective homomorphism of $D$-modules. In particular, the $\C[\hbar^{\pm 1},q^{\pm 1}]$-submodule of $M_B$ generated by the classes $[p_i\omegaCan]$ and $[p_{m-1}'\omegaCan]$ is a submodule also for $D=\C[\hbar^{\pm 1},q^{\pm 1}]\langle \partial_{\hbar},\partial_{q}\rangle$. 
\end{theorem}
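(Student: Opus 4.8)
The plan is to verify that the proposed map $\Psi$ is compatible with both the $q$- and $\hbar$-derivations of the two $D$-modules, then establish injectivity by a dimension/flatness argument. I would proceed as follows.

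\textbf{Step 1: $\Psi$ is a well-defined $\C[\hbar^{\pm1},q^{\pm1}]$-module map.} Since $M_A$ is free over $\C[\hbar^{\pm1},q^{\pm1}]$ with basis the Schubert classes $\sigma_0,\dots,\sigma_{m-2},\sigma_{m-1},\sigma_{m-1}',\sigma_m,\dots,\sigma_{2m-2}$, there is nothing to check here beyond specifying the images, which is done. The content is in the two compatibilities.

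\textbf{Step 2: Compatibility with ${}^A\nabla_{q\partial_q}$ vs.\ ${}^B\nabla_{q\partial_q}$.} On the $B$-side, ${}^B\nabla_{q\partial_q}[p_i\omegaCan]=q\tfrac{\partial}{\partial q}[p_i\omegaCan]+\tfrac1\hbar[q\tfrac{\partial\W}{\partial q}p_i\omegaCan]$, and since $q\tfrac{\partial\W}{\partial q}=q\tfrac{p_1}{p_{2m-2}}$ (only the last term of $\W$ depends on $q$), this becomes $\tfrac1\hbar[q\tfrac{p_1}{p_{2m-2}}p_i\omegaCan]$ plus a term from the explicit $q$-dependence of $p_i$ and $\omegaCan$ (which one arranges to vanish or absorb by the natural normalization, exactly as in the odd case of \cite{PR2}). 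On the $A$-side, $q\tfrac{\partial\sigma_i}{\partial q}=0$ and ${}^A\nabla_{q\partial_q}\sigma_i=\tfrac1\hbar\sigma_1\star_q\sigma_i$. So I must show that the class of $q\tfrac{p_1}{p_{2m-2}}p_i\omegaCan$ in $G_0^{\W}$ corresponds under $\Psi$ to the quantum product $\sigma_1\star_q\sigma_i$. This is where the relations $\hbar d\beta+d\W\wedge\beta\equiv 0$ for $(n-1)$-forms $\beta$ enter: one needs to exhibit, for each $i$, an $(n-1)$-form $\beta_i$ on $\XXcan$ such that $d\W\wedge\beta_i$ equals $(p_{i+1}+(\text{lower correction terms})-q\tfrac{p_1}{p_{2m-2}}p_i)\omegaCan$ up to $\hbar d(\cdot)$, recovering the Chevalley-type quantum multiplication formula. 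Here is where the \emph{cluster algebra structure} on $\C[\XXcan]$ announced in the introduction is used: one chooses $\beta_i$ as a logarithmic form built from cluster variables so that $d\W\wedge\beta_i$ simplifies. The multiplication by $\sigma_1$ in $QH^*(Q_{2m-2})$ is the Chevalley formula and is completely explicit, so one matches term-by-term; the only genuinely new feature compared to the odd case is the fork $\tfrac{p_m}{p_{m-1}}+\tfrac{p_m}{p_{m-1}'}$ in $\W$ and the quadric relation \eqref{e:equationQuadric}, which couples $p_{m-1}$ and $p_{m-1}'$.

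\textbf{Step 3: Compatibility with ${}^A\nabla_{\hbar\partial_\hbar}$ vs.\ ${}^B\nabla_{\hbar\partial_\hbar}$.} On the $B$-side ${}^B\nabla_{\hbar\partial_\hbar}[p_i\omegaCan]=\hbar\tfrac\partial{\partial\hbar}[p_i\omegaCan]-\tfrac1\hbar[\W p_i\omegaCan]$. On the $A$-side ${}^A\nabla_{\hbar\partial_\hbar}\sigma_i=-\tfrac1\hbar c_1(TX)\star_q\sigma_i+\operatorname{Gr}(\sigma_i)$, with $c_1(TX)=(2m-2)\sigma_1$ for the quadric. The grading operator acts by $\operatorname{Gr}(\sigma_i)=i$ (and $m-1$ on $\sigma_{m-1}'$); on the $B$-side this should be reproduced by the Euler-vector-field action on the homogeneous form $p_i\omegaCan$ — i.e.\ $\omegaCan$ is homogeneous of some fixed weight and $p_i$ contributes its degree, so $[\W p_i\omegaCan]$ decomposes into the $c_1\star_q$ part (already handled by Step 2 applied $2m-2$ times, i.e.\ using $\W=\sum(\text{terms})$ and the Euler relation for the superpotential) plus the grading shift. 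Concretely I would invoke a homogeneity (Euler) identity for $\W$: there is a $\C^*$-action on $\XXcan$ scaling the $p_i$ by their cohomological degrees under which $\W$ has weight $1$ and $q$ has weight $2m-2$, so $E\cdot\W=\W$ for the corresponding Euler field $E$, and $E\cdot\omegaCan=c\,\omegaCan$ for a constant $c$; pairing this with Step 2 yields the $\hbar$-compatibility. This is the standard conical-structure argument, as in \cite{iritani,GS}.

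\textbf{Step 4: Injectivity.} Once $\Psi$ is a $D$-module homomorphism, injectivity follows from a generic-rank argument: $M_A$ is free of rank $2m=\dim H^*(Q_{2m-2},\C)$ over $\C[\hbar^{\pm1},q^{\pm1}]$, and after base change to the fraction field (or to the fiber at a generic $(\hbar,q)$), the classes $[p_i\omegaCan]$, $[p_{m-1}'\omegaCan]$ span a subspace whose dimension is at least $2m$ because already the critical-point/Jacobi-ring analysis (Proposition~\ref{p:critical:even} gives $2m$ nondegenerate critical points, and by \cite{rietsch} the Jacobi ring of $\W$ has dimension $2m$) forces these classes to be linearly independent in the specialization $\hbar\to 0$ of $G_0^{\W}$, which is the Jacobi ring; linear independence at $\hbar=0$ lifts to freeness of the submodule over $\C[\hbar^{\pm1},q^{\pm1}]$ by Nakayama. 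Hence $\Psi$ has trivial kernel, and its image is a $D$-submodule because it is the $\C[\hbar^{\pm1},q^{\pm1}]$-span of the $[p_i\omegaCan]$ and $[p_{m-1}'\omegaCan]$ and is closed under $\partial_q,\partial_\hbar$ by Steps 2--3.

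\textbf{Main obstacle.} The crux is Step 2: producing the explicit primitives $\beta_i$ realizing the Chevalley quantum-multiplication relations inside the Gauss-Manin quotient $G_0^{\W}$, particularly at the fork $p_{m-1},p_{m-1}'$ where the superpotential and the defining quadric relation interact. Managing the denominators $\delta_\ell$ and the fork term simultaneously — equivalently, choosing the right cluster coordinates so that $d\W$ has a tractable shape — is the technical heart of the proof, and is the reason the cluster structure on $\C[\XXcan]$ is invoked. Everything else (the $\hbar$-compatibility via the conical/Euler identity, and injectivity via the Jacobi-ring dimension count) is comparatively formal given Step 2.
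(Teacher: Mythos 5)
Your plan follows the same strategy as the paper's proof: reduce the $\hbar$-compatibility to the $q\partial_q$-compatibility via the homogeneity of $\W$ (the paper does this by the explicit rescaling $\mathbf p_i=\hbar^{-i}\p_i$, $\mathbf q=\hbar^{-N}q$, after which the Gauss--Manin system for $\tfrac1\hbar\W$ no longer involves $\hbar$ --- exactly your Euler-field observation), and then match the quantum Chevalley formula \eqref{e:quant-hyperplane}--\eqref{e:quant-hyperplane2} against identities in $G_0^{\W}$ realized by closed logarithmic $(n-1)$-forms built from cluster variables. However, the step you yourself flag as the ``main obstacle'' --- actually producing the forms $\beta_i$ and verifying the identities --- is the entire content of the proof and is not carried out, so what you have is a correct plan rather than a proof. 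To close it the way the paper does: fix a cluster $\mathcal C$ containing $p_i$, so that $\omegaCan=\bigwedge_{p\in\mathcal C}\tfrac{dp}{p}$, set $\xi_i=p_i\bigl(\sum_{c\in\mathcal C\setminus\{p_i\}}m_c\,c\,\partial_c\bigr)$ for undetermined constants $m_c$, and take $\nu_i=\iota_{\xi_i}\omegaCan$; each summand is $p_i\bigwedge_{p\in\mathcal C\setminus\{c\}}\tfrac{dp}{p}$ with $p_i\in\mathcal C\setminus\{c\}$, hence closed, and $d\W\wedge\nu_i=\pm\, p_i\bigl(\sum m_c\,c\,\tfrac{\partial\W}{\partial c}\bigr)\omegaCan$. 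The identity $\bigl[q\tfrac{\partial\W}{\partial q}p_i\omegaCan\bigr]=[p_{i+1}\omegaCan]$ then reduces to exhibiting constants $m_c$ with $q\tfrac{\partial\W}{\partial q}p_i-p_{i+1}=p_i\sum m_c\,c\,\tfrac{\partial\W}{\partial c}$, which requires writing $\W$ in the chosen cluster chart via the exchange relations \eqref{e:mutations} and computing all logarithmic derivatives. Crucially, one cluster does not suffice: the paper uses the initial cluster $\mathcal C_1=\{p_1,\dots,p_{m-2},\delta_1,\dots,\delta_{m-3},p_0,p_{m-1},p_{m-1}',p_{2m-2}\}$ for $0\le i\le m-2$ and the mutated cluster $\mathcal C_2=\{p_{2m-3},\dots,p_m,\delta_1,\dots,\delta_{m-3},p_0,p_{m-1},p_{m-1}',p_{2m-2}\}$ for the remaining indices and for $p_{m-1}'$; your sketch does not anticipate this, and without it the Ansatz fails for $i\ge m-1$ since $p_i$ must belong to the cluster used.

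Two smaller points. First, in Step 2 there is no ``explicit $q$-dependence of $p_i$ and $\omegaCan$'' to absorb: neither depends on $q$, so $q\partial_q$ kills the chosen representative and the entire covariant derivative is $\tfrac1\hbar\bigl[q\tfrac{\partial\W}{\partial q}p_i\omegaCan\bigr]$. Second, your injectivity argument asserts that the existence of $2m$ nondegenerate critical points ``forces'' the classes $[p_i\omegaCan]$, $[p_{m-1}'\omegaCan]$ to be linearly independent in the Jacobi ring at $\hbar=0$. That does not follow: the dimension count gives $\dim\bigl(\C[\XXcan]/(\partial\W)\bigr)=2m$, but the linear independence of these particular $2m$ functions modulo the Jacobian ideal is equivalent to the nonsingularity of the $2m\times 2m$ matrix of their values at the critical points of Proposition~\ref{p:critical:even}, which must be checked (note that at the two extra critical points almost all $p_i$ vanish). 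The paper does not make this argument; it defers injectivity to \cite[Section~5]{MR}. Either verify the nonsingularity explicitly or cite that reference.
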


\begin{remark} 
	In the odd quadrics case, \cite{GS} (with N\'emethi and Sabbah) prove an additional property,  cohomological tameness, for the superpotential, which implies that the dimension of $M_B$ agrees with the number of critical points of $\W$. It is an interesting question whether this proof could be adapted to give a proof of cohomological tameness in the even case. Since by Proposition~\ref{p:critical} the number of critical points of $\W$ agrees with the dimension of $H^*(X,\C)$ this would imply that the injective homomorphism in Theorem~\ref{t:connections-even} is an isomorphism.  
\end{remark}

 To prove Theorem~\ref{t:connections-even} we consider a cluster algebra structure on our mirror $\XX_{2m-2}$. Cluster algebras were introduced by Fomin and Zelevinsky in the seminal paper \cite{FZ1}, which was the first of the series \cite{FZ1, FZ2,FZ3, FZ4}.

The coordinate ring $\C[\XX_{2m-2}]$ has a cluster algebra structure of type $A_1^{m-2}$ which 
is described in detail in \cite[Section 2]{GLS-Survey} and \cite[Section 12]{GLS-Partial}, and which we review here.  Note that the coordinates $\{y_1, y_2,\dots, y_{2m}\}$ in \cite{GLS-Survey, GLS-Partial}
correspond to our coordinates $\{\p_0, \p_1,\dots, \p_{m-2}, \p_{m-1}, \p'_{m-1}, \p_m,\dots, \p_{2m-2}\}$ here, while the coordinates $\{p_{\ell}\}$ in \cite{GLS-Survey, GLS-Partial} correspond to our coordinates $\{\delta_{\ell}\}$.

Consider the following initial quiver:
\begin{center}\label{fig:clusterquiver}
 \begin{tikzpicture}[every node/.style={thick,circle,inner sep=1pt,minimum size=30pt,scale=0.66},scale=0.66]
	\node[draw=black] (0,0) (1){$\p_1$};
	\node[draw=black,right=of 1] (2){$\p_2$};
	\node[draw=none,right=of 2] (3){$\dots$};
	\node[draw=black,right=of 3] (4){$\p_{m-3}$};
	\node[draw=black,right=of 4] (4bis){$\p_{m-2}$};
	\node[draw=black,below=of 1] (6){$\delta_1$};
	\node[draw=black,left=of 6] (5){$p_0$};
	\node[draw=black,left=of 5] (12){$p_{2m-2}$};
	\node[draw=black,right=of 6] (7){$\delta_2$};
	\node[draw=none,right=of 7] (8){$\dots$};
	\node[draw=black,right=of 8] (9){$\delta_{m-3}$};
	\node[draw=black,right=of 9] (10){$\p_{m-1}$};
	\node[draw=black,right=of 10] (11){$\p_{m-1}'$};
    \path (5) edge[->,thick] (1);
    \path (12) edge[->,thick] (1);
    \path (1) edge[->,thick] (6);
    \path (6) edge[->,thick] (2);
    \path (2) edge[->,thick] (7);
    \path (7) edge[->,thick] (3);
    \path (8) edge[->,thick] (4);
    \path (4) edge[->,thick] (9);
    \path (9) edge[->,thick] (4bis);
    \path (4bis) edge[->,thick] (10);
    \path (4bis) edge[->,thick] (11);
 \end{tikzpicture}
\end{center}
Here the initial cluster variables correspond to the vertices in the top row of the quiver, while the frozen variables (or coefficients) correspond to the vertices in the bottom row. Recall that the $p_i$  are  Pl\"ucker coordinates, and the $\delta_i$ are defined as in \eqref{eq:delta2}. We see from this description that the coordinate ring of $\XX_{2m-2}$ has a cluster structure of type $A_1^{m-2}$. In particular, it is of finite type, and there are $2^{m-2}$ different clusters, consisting of
\begin{itemize}
	\item the cluster variables $r_1,\dots,r_{m-2}$, where $r_i \in \{\p_i,\p_{2m-2-i}\}$;
	\item the frozen variables (or coefficients) $\delta_1,\dots,\delta_{m-3}$, $\p_0$, $\p_{m-1}$, $\p_{m-1}'$, and $\p_{2m-2}$.
\end{itemize}
The exchange relations are
\begin{align}\label{e:mutations}
	\p_i \p_{2m-2-i} = 	\begin{cases}
							\p_0 \p_{2m-2} + \delta_1 & \text{for $i=1$;}\\
							\delta_{i-1}+\delta_i & \text{for $1 \leq i \leq m-3$;} \\
							\delta_{m-3} + \p_{m-1}\p_{m-1}' & \text{for $i=m-2$}.
						\end{cases}
\end{align}
Note that the exchange relation for $i=m-2$ is a Plücker relation: it is the equation of the dual quadric \eqref{e:equationQuadric}.

\begin{remark} 
	In the case of $\X_{2m-3}^\circ$ the isomorphism with the Richardson variety combined with \cite{GLS} also gives a cluster algebra structure of type $A_1^{m-2}$, with a similar quiver to the one shown on page~\pageref{fig:clusterquiver} but where the frozen vertices labelled $p_{m-1}$ and $p'_{m-1}$ are identified.
\end{remark}

\begin{proof}[Proof of Theorem \ref{t:connections-even}]
For the injectivity of $\Psi$ we refer to \cite[Section~5]{MR}.
It remains to prove that $\Psi$ preserves the $D$-module structure. We use a change of coordinates to reduce the problem to checking only the action of $q\partial_q$. Namely, this follows by replacing $(\p_i,q,\hbar)$ with $(\mathbf p_i,\mathbf q,\hbar)$, where 
\[
	\mathbf p_i=\hbar^{-i}\p_i,\qquad \mathbf p_{m-1}'=\hbar^{1-m}\p_{m-1}',\qquad \mathbf q=\hbar^{-N}q, \qquad \hbar=\hbar,
\]
and observing that written in these coordinates the Gauss-Manin system for $\frac 1\hbar \W$ no longer involves the $\hbar$.  

Now we check that the map $\Psi$ preserves the action of $q\partial_q$. We consider the following identities in $QH^*(\Q_{2m-2},\C)$, which are a special case of results in \cite{FW}:

\begin{align}\label{e:quant-hyperplane}
	\sigma_1 \star_q \sigma_i = 	\begin{cases}
										\sigma_{i+1} & \text{for $0 \leq i \leq m-3$ or $m-1 \leq i \leq 2m-4$;} \\
										\sigma_{m-1}+\sigma_{m-1}' & \text{for $i=m-2$;} \\
										\sigma_{2m-2} + q \sigma_0 & \text{for $i=2m-3$;} \\
										q \sigma_1 & \text{for $i=2m-2$,}
								  \end{cases}
\end{align}	

\begin{equation}\label{e:quant-hyperplane2}
	\hspace{-3in} \sigma_1 \star_q \sigma_{m-1}' = \sigma_m.
\end{equation}

We need to prove that there are similar identities on the $B$ side:
\begin{align}\label{e:identities-W}
	\left[q\frac{\partial \W}{\partial q} \p_i \omegaCan\right] = 	\begin{cases}
																	[\p_{i+1} \omegaCan] & \text{for $0 \leq i \leq m-3$ or $m-1 \leq i \leq 2m-4$;} \\
																	[(\p_{m-1}+\p_{m-1}') \omegaCan] & \text{for $i=m-2$;} \\
																	[(\p_{2m-2} + q) \omegaCan] & \text{for $i=2m-3$;} \\
																	[q \p_1 \omegaCan] & \text{for $i=2m-2$,}
							 									\end{cases}
\end{align}

\begin{equation}\label{e:identities-W2}
	\left[q\frac{\partial \W}{\partial q} \p_{m-1}' \omegaCan \right] = [\p_m\omegaCan],
\end{equation}
where $\omegaCan$ is the canonical $(2m-2)$-form on $\XXcan$.

The proof of these identities in $M_B$ proceeds by constructing closed $(2m-3)$-forms $\nu_i$ and $\nu_{m-1}'$ such that the relation corresponding to $p_i$ will follow from the fact that 
\begin{equation*}
	[d \W\wedge \nu_i]=[( \hbar d +  d\W\wedge - )\nu_i]= 0 
\end{equation*}
and similarly for $p_{m-1}'$. (The first equality above comes from the fact that $\nu_i$ is closed,
and the second comes from the definition of $M_B$.)

Concretely, we will pick a cluster $\mathcal C$ containing a particular Pl\"ucker coordinate, say $p_i$, and use the following Ansatz for constructing $\nu_i$. We define a vector field
\begin{equation*}
	\xi_i= p_i \left(\sum_{c\in\mathcal C\setminus\{p_i\}} m_c c\partial_c\right) 
\end{equation*}  
and define an associated $(2m-3)$-form by insertion $\nu_i=\iota_{\xi_i}\omegaCan$, and analogously for $\nu_{m-1}'=\iota_{\xi_{m-1}'}\omegaCan$. Here the $m_c$'s are constants and $\iota$ is the interior product.

To see that these $(2m-3)$-forms are  closed, write $\omegaCan = \bigwedge_{p \in \mathcal C} \frac{dp}{p}.$
For $c\in \mathcal C$, we have $\iota_{c \partial_c} \omegaCan = \bigwedge_{p \in \mathcal C \setminus \{c\}} \frac{dp}{p},$ and so $\nu_i$ is a $\C$-linear combination of terms of the form $p_i \bigwedge_{p\in \mathcal C \setminus \{c\}} \frac{dp}{p}$ for $c \neq p_i$. Such a term is closed, because $p_i$ lies in $\mathcal C \setminus \{c\}$.

Using the fact that $d \W \wedge \omegaCan = 0$, we get $d \W \wedge \nu_i = \pm d \W(\xi_i) \omegaCan.$ It follows that 
\begin{equation*}
	d \W\wedge \nu_i=  p_i \left(\sum_{c\in\mathcal C\setminus\{p_i\}} m_c c \frac{\partial \W}{\partial c} \right)\omegaCan.  
\end{equation*}
Therefore e.g. in order to prove that $\left[q\frac{\partial \W}{\partial q} \p_i \omegaCan\right]-[p_{i+1} \omegaCan] = 0$, we will show that $q\frac{\partial \W}{\partial q} \p_i  - p_{i+1}$ has the form  $p_i \left(\sum_{c\in\mathcal C\setminus\{p_i\}} m_c c \frac{\partial \W}{\partial c} \right) $, for some choice of coefficients $m_c$.
  
To prove these identities, we will work with two clusters:
\begin{itemize}
	\item the initial cluster  
		$\mathcal{C}_1 = \{ p_1,\dots,p_{m-2},\delta_1,\dots,\delta_{m-3},\p_0, \p_{m-1},\p_{m-1}', \p_{2m-2}\}$;
	\item the cluster $\mathcal{C}_2 = 
		\{ p_{2m-3},\dots,p_m,\delta_1,\dots,\delta_{m-3},\p_0, \p_{m-1},\p_{m-1}', \p_{2m-2}\}$.
\end{itemize}

Let us first start with $\mathcal{C}_1$ and express $\W$ in terms of it using the exchange relations \eqref{e:mutations}. To simply our calculations, we set $\p_0=1$, and let $\delta_0$ denote $\p_0 \p_{2m-2} = \p_{2m-2}$.
\begin{align*}\label{e:W-chart1}
	\W =  \p_1 & + \sum_{\ell=1}^{m-3} \left( \frac{\p_{\ell+1}\delta_{\ell-1}}{\p_\ell \delta_\ell} + \frac{\p_{\ell+1}}{\p_\ell} \right) + \frac{\delta_{m-3}}{\p_{m-2}\p_{m-1}} + \frac{\delta_{m-3}}{\p_{m-2}\p_{m-1}'} \\
	& + \frac{\p_{m-1}}{\p_{m-2}} + \frac{\p_{m-1}'}{\p_{m-2}} + q \frac{\p_1}{\delta_0}.\notag
\end{align*}

The partial derivatives of $\W$ are:
\begin{align*}
	q\frac{\partial \W}{\partial q} &= q\frac{p_1}{\delta_0},\\
\p_1 \frac{\partial \W}{\partial \p_1} &= \p_1-\frac{\p_2\delta_0}{\p_1\delta_1}-\frac{\p_2}{\p_1}+q\frac{\p_1}{\delta_0}, \\
\p_i \frac{\partial \W}{\partial \p_i} &= \frac{\p_i\delta_{i-2}}{\p_{i-1}\delta_{i-1}}+\frac{\p_i}{\p_{i-1}}-\frac{\p_{i+1}\delta_{i-1}}{\p_i\delta_i}-\frac{\p_{i+1}}{\p_i} \text{ for $2 \leq i \leq m-3$}, \\
	\p_{m-2} \frac{\partial \W}{\partial \p_{m-2}} &= \frac{\p_{m-2}\delta_{m-4}}{\p_{m-3}\delta_{m-3}} + \frac{\p_{m-2}}{\p_{m-3}}- \frac{\delta_{m-3}}{\p_{m-2}\p_{m-1}} - \frac{\delta_{m-3}}{\p_{m-2}\p_{m-1}'}-\frac{p_{m-1}}{\p_{m-2}}-\frac{p_{m-1}'}{\p_{m-2}}, \\
\delta_0 \frac{\partial \W}{\partial \delta_0} &= \frac{\p_2\delta_0}{\p_1\delta_1} - q\frac{\p_1}{\delta_0}, \\
	\delta_i \frac{\partial \W}{\partial \delta_i} &= -\frac{p_{i+1}\delta_{i-1}}{\p_i\delta_i} + \frac{\p_{i+2}\delta_i}{\p_{i+1}\delta_{i+1}} \text{ for $1 \leq i \leq m-4$}, \\
\delta_{m-3} \frac{\partial \W}{\partial \delta_{m-3}} &= -\frac{\p_{m-2}\delta_{m-4}}{\p_{m-3}\delta_{m-3}} + \frac{\delta_{m-3}}{\p_{m-2}\p_{m-1}} + \frac{\delta_{m-3}}{\p_{m-2}\p_{m-1}'}, \\
	\p_{m-1}\frac{\partial \W}{\partial \p_{m-1}} &= -\frac{\delta_{m-3}}{\p_{m-2}\p_{m-1}} - \frac{\delta_{m-3}}{\p_{m-2}\p_{m-1}'} + \frac{\p_{m-1}}{\p_{m-2}},
\text{ and } \\
	\p_{m-1}'\frac{\partial \W}{\partial \p_{m-1}'} &= -\frac{\delta_{m-3}}{\p_{m-2}\p_{m-1}} - \frac{\delta_{m-3}}{\p_{m-2}\p_{m-1}'} + \frac{\p_{m-1}'}{\p_{m-2}}.
\end{align*}

Hence 
\begin{align*}
	q \frac{\partial \W}{\partial q} p_{i} - p_{i+1} &=  -\p_{i} \left(
\sum_{j=i+1}^{m-1} \p_j \frac{\partial \W}{\partial \p_j} + \p_{m-1}' \frac{\partial \W}{\partial \p_{m-1}'} + \sum_{j=0}^{m-3} \delta_j \frac{\partial \W}{\partial \delta_j} + \sum_{j=i}^{m-3} \delta_j \frac{\partial \W}{\partial \delta_j} \right) \\
	& \text{ for $0 \leq i \leq m-3$,} \notag \text{ and }\\
	q \frac{\partial \W}{\partial q} p_{m-2} - (p_{m-1}+p'_{m-1})&=  
 -\p_{m-2} \left(\p_{m-1} \frac{\partial \W}{\partial \p_{m-1}} +\p_{m-1}' \frac{\partial \W}{\partial \p_{m-1}'} + \sum_{j=0}^{m-3} \delta_j \frac{\partial \W}{\partial \delta_j} \right).
\end{align*}

Since the right-hand sides of the above equations have the form $p_i \left(\sum_{c\in\mathcal C\setminus\{p_i\}} m_c c\partial_c \W\right) $, this proves identity \eqref{e:identities-W} for $0 \leq i \leq m-2$.

To prove the remaining identities, we use the cluster  $\mathcal{C}_2$. In this cluster chart, $\W$ takes the following form:
\begin{align*}
	\W = & \frac{\delta_0}{\p_{2m-3}} + \frac{\delta_1}{\p_{2m-3}} + \sum_{\ell=1}^{m-4} \left( \frac{\p_{2m-2-\ell}}{\p_{2m-3-\ell}} + \frac{\p_{2m-2-\ell}\delta_{\ell+1}}{\p_{2m-3-\ell}\delta_\ell}\right) + \frac{\p_m}{\p_{m-1}} \\
	&  + \frac{\p_m}{\p_{m-1}'} + \frac{\p_{m+1}}{\p_m} + \frac{\p_{m-1}\p_{m-1}'\p_{m+1}}{\p_m\delta_{m-3}}+\frac{q}{\p_{2m-3}} + q\frac{\delta_1}{\p_{2m-3}\delta_0}. \notag
\end{align*}

Working out the partial derivatives of $\W$ as before, we get
\begin{align}
	q \frac{\partial \W}{\partial q} p_{m-1} - p_m &=
p_{m-1} \left( \p_{m-1}' \frac{\partial \W}{\partial \p_{m-1}'} + \sum_{j=m}^{2m-3} \p_j\frac{\partial \W}{\partial \p_j} + \sum_{j=0}^{m-3} \delta_j \frac{\partial \W}{\partial \delta_j}\right)  \\
	q \frac{\partial \W}{\partial q} p'_{m-1} - p_m &= p'_{m-1} \left(
\p_{m-1} \frac{\partial \W}{\partial \p_{m-1}} + \sum_{j=m}^{2m-3} \p_j\frac{\partial \W}{\partial \p_j} + \sum_{j=0}^{m-3} \delta_j \frac{\partial \W}{\partial \delta_j} \right)\\
	q \frac{\partial \W}{\partial q} p_i -p_{i+1} &= p_i \left(
-\sum_{j=i+1}^{2m-3} \p_j\frac{\partial \W}{\partial \p_j} - \sum_{j=0}^{2m-3-i} \delta_j \frac{\partial \W}{\partial \delta_j} \right) \\
	& \qquad\qquad \text{for $m \leq i \leq 2m-4$}, \notag 
\end{align}

Recall that $\delta_0$ is $p_{2m-2}$. The final two relations are
\begin{align}
 	q\frac{\partial \W}{\partial q} \p_{2m-3}  -   (p_{2m-2}+q) &= -p_{2m-3}\delta_0 \frac{\partial \W}{\partial \delta_0}
 \text{\quad and } \\
	q \frac{\partial \W}{\partial q}\p_{2m-2} - q p_1 &=0
\end{align}
This gives us the identities \eqref{e:identities-W} for $m-1 \leq i \leq 2m-2$, as well as  \eqref{e:identities-W2}.
\end{proof}

\section{The hypergeometric flat section of a quadric}\label{s:aseries}

Givental in \cite{Givental:EqGW} constructed flat sections of a dual version of the Dubrovin connection (see Equations \eqref{e:dualDubrovin1} and \eqref{e:dualDubrovin2} below) in terms of Gromov-Witten invariants. 
In this section we directly and explicitly compute all the components of a distinguished flat section and the resulting invariants, in two different ways. The first component we consider is also a particular component of Givental's $J$-function.  

\subsection{The dual Dubrovin connection and the $J$-function}

We begin by defining Givental's $J$-function and what we call the `quantum differential operators'. Consider the dual connection to ${}^A\nabla$ with respect to the pairing 
\[
	\langle \sigma ,\tau \rangle=(2\pi i \hbar)^N\int_{X} \sigma\cup \tau.
\]
Here $\sigma \cup \tau$ is the usual cup product of $\sigma$ and $\tau$, which we will subsequently also denote by $\sigma \tau$. Explicitly, the dual connection is given by the formulas:
\begin{eqnarray}\label{e:dualDubrovin1}
	{}^A\nabla^\vee_{q\partial_q} S&:=& q\frac{\partial S}{\partial q} - \frac{1}{\hbar} \sigma_{1}\star_{q}S, \\
	\label{e:dualDubrovin2}
	{}^A \nabla^\vee_{\hbar\partial_\hbar} S&:=& \hbar\frac{\partial S}{\partial\hbar} + \frac{1}{\hbar} c_1(TX)\star_{q}S+ \operatorname{Gr} (S),
\end{eqnarray}
compare~\cite[Definition~3.1]{iritani}. For the purposes of the $J$-function we ignore the ${}^A \nabla^\vee_{\hbar\partial_\hbar}$ part of the covariant derivative and consider  ${}^A\nabla^\vee_{q\partial_q} $ as a family of connections (in the parameter $\hbar$). Formal flat sections  indexed by the cohomology basis were written down by Givental \cite{Givental:EqGW} in terms of descendent Gromov-Witten invariants. We denote these sections by $S_0,\dotsc, S_{2m-1}$ in the case of $Q_{2m-1}$, and  by $S_0,\dotsc, S_{m-1},S_{m-1}',S_m,\cdots, S_{2m-2}$ for $Q_{2m-2}$, in keeping with the notation from \eqref{e:SchubertGS} for Schubert classes. 
See \cite[(10.14)]{CoxKatz} for a precise definition of the sections $S_i$.

\begin{defn} We define Givental's $J$-function in our setting as 
\[
	J=(2\pi i \hbar)^N\sum \langle S_j,\sigma_0\rangle \sigma_{PD(j)},
\]
where the sum  is over all the Schubert classes, including $\sigma_{m-1}'$ in the even case, and where  $\sigma_{PD(j)}$ stands for the Poincar\'e dual cohomology class to $\sigma_j$. 
\end{defn}

In the case of a quadric (or, indeed, of any projective Fano complete intersection), the $J$-function is computed explicitly in ~\cite[Theorem 9.1]{Givental:EqGW} from the $J$-function of projective space. Namely
\begin{equation}\label{e:Givental}
	J^{Q_N}=e^{\frac{\ln(q)\sigma_1}{\hbar}}\sum_{d\ge 0}\frac{\prod_{j=1}^{2d}(2\sigma_1+j\hbar)}{\prod_{j=1}^d(\sigma_1+j\hbar)^{N}}q^d.
\end{equation}

We consider a family of differential operators which annihilate the $J$-function:
\begin{defn}[{\cite[Definition~10.3.2]{CoxKatz}}]
The differential operators $P$ which are formal power series in $\hbar q\partial_q, q, \hbar$ and which annihilate the coefficients of Givental's $J$-function are called \emph{quantum differential operators}.
\end{defn}

\subsection{The hypergeometric term of the $J$-function.}


Among Givental's flat sections $S_i$, the flat section $S_N$ corresponding to the class of a point has the property that all its coefficients are power series in ${\bf q}=\hbar^{-N}q$.
Moreover, a special role is played by the coefficient $(2\pi i\hbar )^N\langle S_N, \sigma_0\rangle$, also appearing as the coefficient of the fundamental class in the definition of $J$-function. We define it as in \cite[Definition~5.1.1]{BCFKvSGr}: 
\begin{defn}
	The \emph{hypergeometric series} $A_X$ of $X$ is the unique power series  of the form $A_X (q)= 1+\sum_{k=1}^{\infty} a_k { q}^k$, for which $P({q} \partial_{q},{q},1)A_X=0$ for all quantum differential operators $P(\hbar q\partial_q,q,\hbar)$ specialized to $\hbar=1$. We denote the hypergeometric series $A_{Q_N}$ of the quadric $Q_N$ by $A_N$.
\end{defn}

The hypergeometric series $A_N$ of the quadric $Q_N$ may be obtained by setting $\hbar$ to $1$ in $(2\pi i\hbar )^N\langle S_N,\sigma_0\rangle$. Alternatively we have $\langle S_N,\sigma_0\rangle=A_N(\hbar^{-N}q)$. 

We recall the geometric interpretation of the coefficients of $A_X$ below. The flat sections $S_i$ and in particular the $J$-function encode certain descendent Gromov-Witten invariants. Let 
\begin{equation}\label{Gromov-Witten}
	I_k(\psi_1^{a_1}\gamma_1,\dots, \psi_r^{a_r} \gamma_r)
\end{equation}
denote the degree $k$ descendent Gromov-Witten invariant associated to the cohomology classes $\gamma_1, \dots, \gamma_r$, where the $\psi$-class $\psi_i$ denotes the first Chern class of the $i$th cotangent
bundle  of the moduli space of degree $k$ genus $0$ stable maps with $r$ marked points, see
\cite[Section~10.1]{CoxKatz}. Let $\psi$ stand for $\psi_1$. If we write
\[
	J^{\Q_N} = (2\pi i \hbar)^N \sum J^{\Q_N}_i \sigma_{\PD(i)},
\]
then in fact $A_N(\hbar^{-N} q)=\langle S_N,\sigma_0\rangle=J^{\Q_N}_N $ and we have
\begin{align*}
A_N(\hbar^{-N} q)		&= J^{\Q_N}_N =1 + \sum_{k=1}^\infty q^k I_k\left(\frac{\sigma_N e^{\frac{\ln(q)\sigma_1}{\hbar}}}{\hbar-\psi},\sigma_0\right) \\
				&= 1 + \sum_{k=1}^\infty \sum_{j=0}^\infty \sum_{i=0}^\infty \frac{q^k}{\hbar} I_k\left(\sigma_N \left(\frac{\ln(q)\sigma_1}{\hbar}\right)^j\frac{1}{j!}\left(\frac{\psi}{\hbar}\right)^i,\sigma_0\right).
\end{align*}
The cup-product $\sigma_N\cup \left(\frac{\ln(q)\sigma_1}{\hbar}\right)^j$ is nonzero if and only if $j=0$. Therefore we have
\[
	J^{\Q_N}_N = 1 + \sum_{k=1}^\infty \sum_{i=0}^\infty \frac{q^k}{\hbar} I_k\left( \sigma_N \left(\frac{\psi}{\hbar}\right)^i,\sigma_0\right).
\]
Now  the dimension of the moduli space of stable maps $\overline{\mathcal{M}}_{0,2}(\Q_N,k)$ is equal to $(k+1)N-1$, hence
\[
	J^{\Q_N}_N = 1 + \sum_{k=1}^\infty \frac{q^k}{\hbar} I_k\bigg( \sigma_N \left(\frac{\psi}{\hbar}\right)^{k N-1},\sigma_0\bigg).
\]
Next we use the fundamental class axiom to get
\[
	J^{\Q_N}_N = 1 + \sum_{k=1}^\infty \left(\frac{q}{\hbar^N}\right)^k I_k\left( \sigma_N \psi^{k N-2}\right).
\]
If we set $\hbar=1$ in $J^{\Q_N}_N$, this gives exactly the hypergeometric series of the quadric, since $J^{\Q_N}_N=A_{N}(\hbar^{-N}q)$. Hence we obtain the following geometric interpretation of the coefficient $a_k$ of $q^k$ in $A_N(q)$:
\begin{equation}\label{e:AseriesGW}
	a_k = I_k\left( \sigma_N \psi^{k N-2}\right).
\end{equation}

\subsection{The hypergeometric flat section of the dual Dubrovin connection}
In this Section, as an illustration of the mirror theorem, we compute explicitly the coefficients of the hypergeometric flat section $S_N$ of the Dubrovin connection for $Q_N$, once using the $A$-model and once using the $B$-model. The main result of the computations is the following.

\begin{theorem}\label{thm:hypergeometric}
The hypergeometric flat section $S_N$ of the dual Dubrovin connection for $Q_N$ is given by the expansion 
\[
S_N=\frac{1}{(2\pi i\hbar )^N}{\sum_{\ell=0}^{N}}{}'\langle S_N,\sigma_{\ell}\rangle \sigma_{PD(\ell)},
\] 
where $\sum'$ means that we add an extra summand $\langle S_N,\sigma_{m-1}'\rangle \sigma_{m-1}$ when $N=2m-2$. The coefficients are given by the following formulas:	
	\begin{equation*}
\langle S_N,\sigma_{\ell}\rangle=	\begin{cases}		
	\sum_{k \geq 0}  \frac{k^\ell}{\hbar^{kN-\ell}(k!)^N}  \cdot \binom{2k}{k} \cdot q^k &\text{ if $0 \leq \ell \leq \lfloor \frac{N-1}{2} \rfloor$,} \\
		\sum_{k \geq 0} \frac{k^\ell}{2 \hbar^{kN-\ell}(k!)^N} \cdot \binom{2k}{k} \cdot q^k &\text{ if $\lfloor \frac{N+1}{2} \rfloor \leq \ell \leq N-1$,} \\
		\sum_{k \geq 0} \frac{1}{\hbar^{(k-1)N}(k-1)!^N} \cdot \frac{k-1}{k} \cdot \binom{2k-2}{k-1} \cdot q^k &\text{ if $\ell=N$.} 
		\end{cases}
	\end{equation*}
	Moreover, when $N=2m-2$ is even, we have
	\[
		\langle S_N,\sigma_{m-1}'\rangle=\sum_{k \geq 0} \frac{k^{m-1}}{2 \hbar^{kN+1-m} (k!)^N} \cdot \binom{2k}{k} \cdot q^k.
	\]
\end{theorem}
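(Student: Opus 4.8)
The plan is to compute the coefficients $\langle S_N,\sigma_\ell\rangle$ by extracting them directly from Givental's explicit formula \eqref{e:Givental} for $J^{Q_N}$, and separately to verify the same answer via the $B$-model oscillating/residue integral, using the mirror theorem (Theorems~\ref{t:connections-odd} and~\ref{t:connections-even}) to identify the two. On the $A$-side, the starting point is that $J = (2\pi i\hbar)^N\sum\langle S_j,\sigma_0\rangle\sigma_{PD(j)}$, together with the observation that the flat section $S_N$ for the point class has all coefficients power series in $\mathbf q = \hbar^{-N}q$; so I would first express $S_N$ in terms of the $\langle S_N,\sigma_\ell\rangle$ and then relate each $\langle S_N,\sigma_\ell\rangle$ to a coefficient in the expansion of $J^{Q_N}$. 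Concretely, $J^{Q_N} = e^{\ln(q)\sigma_1/\hbar}\sum_{d\ge0}\frac{\prod_{j=1}^{2d}(2\sigma_1+j\hbar)}{\prod_{j=1}^d(\sigma_1+j\hbar)^N}q^d$, and one extracts the $\sigma_{PD(\ell)}$-component by expanding in the nilpotent $\sigma_1$ and using $\sigma_1^\ell = \sigma_\ell$ (for $\ell$ below the middle) together with the cohomology ring relations of the quadric.

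The key computational step is the Laurent/Taylor expansion of $\frac{\prod_{j=1}^{2d}(2\sigma_1+j\hbar)}{\prod_{j=1}^d(\sigma_1+j\hbar)^N}$ as a polynomial in $\sigma_1$ modulo $\sigma_1^{N+1}=0$ (with the quadric relation $\sigma_{m-1}\sigma_{m-1}' $ etc.\ appearing in the even case, which is why a separate $\sigma_{m-1}'$ component shows up). Writing $\prod_{j=1}^{2d}(2\sigma_1+j\hbar) = (2d)!\,\hbar^{2d}\prod_{j=1}^{2d}(1+\tfrac{2\sigma_1}{j\hbar})$ and $\prod_{j=1}^d(\sigma_1+j\hbar)^N = (d!)^N\hbar^{dN}\prod_{j=1}^d(1+\tfrac{\sigma_1}{j\hbar})^N$, and using the leading term $\frac{(2d)!}{(d!)^N}\hbar^{(2-N)d} = \binom{2d}{d}\frac{(d!)^{2-N}}{1}\hbar^{(2-N)d}$, one reads off that the $\sigma_0$-coefficient (the hypergeometric series $A_N$) has $q^k$-coefficient $\frac{1}{\hbar^{(k-1)N}(k-1)!^N}\cdot\frac{k-1}{k}\binom{2k-2}{k-1}$ after accounting for the $e^{\ln(q)\sigma_1/\hbar}$ prefactor and the fundamental-class/dimension bookkeeping already carried out in Section~\ref{s:aseries} (equations for $J^{Q_N}_N$); similarly the $\sigma_\ell$-coefficient picks up an extra factor $k^\ell/\hbar^{kN-\ell}$ from expanding $e^{\ln(q)\sigma_1/\hbar}$ in degree $\ell$ and tracking the degree-$d=k$ term, and a factor $1/2$ when $\ell$ is past the middle because then $\sigma_\ell$ is twice a Schubert class (or, in the even case, because the Schubert class is shared between the two rulings). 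The factor $\binom{2k}{k}$ vs.\ $\binom{2k-2}{k-1}$ and the shift $k\mapsto k-1$ in the $\ell=N$ line come from the fundamental class axiom / divisor considerations already invoked in \eqref{e:AseriesGW}.

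On the $B$-side, I would compute the residue integrals $\frac{1}{(2\pi i)^N}\int_{(S^1)^N} e^{\frac1\hbar\W}p_i\,\omegaCan$ by passing to the quiver-mirror Laurent polynomial coordinates $(\XXlus,\HH)$ from Propositions~\ref{p:odd-quadric} and~\ref{p:W1}, where $\omegaCan$ becomes the standard torus-invariant form and $\W = \HH$ is an explicit Laurent polynomial; expanding $e^{\frac1\hbar\HH}$ and extracting the constant term of the appropriate monomial gives a multinomial sum which, after simplification, matches the $A$-side series term-by-term. The match for the fundamental-class component is essentially the mirror theorem for the hypergeometric series (the constant term of $e^{\HH/\hbar}$ equals $A_N$), and the other components follow because $\Psi(\sigma_\ell) = [p_\ell\omegaCan]$ intertwines the connections, so the flat section $S_{\Gamma}$ attached to $(S^1)^N$ has $\sigma_{PD(\ell)}$-coefficient equal to that residue integral; its flatness is Theorem~\ref{t:connections-even} (resp.\ \ref{t:connections-odd}), and a global power-series solution of \eqref{e:dualDubrovinPDE} with the correct leading term is unique, which pins it down to $S_N$.

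The main obstacle I expect is the bookkeeping in the even case: disentangling the two middle-dimensional Schubert classes $\sigma_{m-1},\sigma_{m-1}'$, getting the factor of $\tfrac12$ and the separate formula for $\langle S_N,\sigma_{m-1}'\rangle$ right, and making sure the $B$-side residue computation in the quiver coordinates (which have the extra $c,d$ variables and the $a_1+b_1$ numerator) reproduces exactly these half-integer-looking coefficients. The odd case is cleaner but the extraction of $\sigma_1^\ell$-components from the product formula modulo $\sigma_1^{N+1}$, keeping track of powers of $\hbar$ and the degree bound $d=k$ forced by the dimension of $\overline{\mathcal M}_{0,2}(Q_N,k)$, still requires care; I would organize this as a single generating-function identity rather than a term-by-term induction. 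A secondary subtlety is justifying convergence/validity of interchanging the torus integral with the power-series expansion of $e^{\HH/\hbar}$, which is handled by working formally in $\C[[q]]$ as in \cite{BCFKvSGr}.
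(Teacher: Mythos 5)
Your B-model argument is essentially the one in the paper: the flat section attached to the compact torus $\Gamma_0\cong (S^1)^N$ is expanded using the quiver coordinates of Propositions~\ref{p:odd-quadric} and~\ref{p:W1}, and the $q^k$-coefficient of the constant term of $p_\ell\, e^{\W/\hbar}$ is evaluated as a sum of multinomial coefficients. That half is sound, modulo the bookkeeping you yourself flag and the identification of $S_{\Gamma_0}$ with $S_N$ by uniqueness of the power-series solution with the given leading term.

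The A-model half has a genuine gap. The components of Givental's $J$-function are, by definition, the pairings $\langle S_j,\sigma_0\rangle$ as $j$ varies --- one fixed entry of \emph{each} flat section --- whereas the theorem asks for all pairings $\langle S_N,\sigma_\ell\rangle$ of the \emph{single} section $S_N$ as $\ell$ varies. Expanding the closed formula \eqref{e:Givental} in powers of $\sigma_1$ and reading off the $\sigma_{PD(\ell)}$-component therefore produces $\langle S_\ell,\sigma_0\rangle$, i.e. invariants of the form $I_k(\psi^a\sigma_\ell,\sigma_0)$, not $\langle S_N,\sigma_\ell\rangle$, which is built from $I_k(\psi^a\sigma_N,\sigma_\ell)$; the $\psi$-class sits on a different insertion and there is no direct symmetry identifying the two. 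The standard repair --- recover the remaining components of $S_N$ from $\langle S_N,\sigma_0\rangle$ by repeatedly applying ${}^A\nabla^\vee_{q\partial_q}$, i.e. quantum multiplication by $\sigma_1$ --- works for odd quadrics precisely because $M_A$ is cyclic, but fails for $N=2m-2$: since $\sigma_{m-1}$ and $\sigma_{m-1}'$ are not polynomials in $\sigma_1$, the coefficient $\langle S_N,\sigma_{m-1}'\rangle$ and the factor $\tfrac12$ splitting at the middle cannot be extracted from the $J$-function in this way. This is exactly the obstruction the paper records immediately before its A-model proof, and it is why that proof instead runs the divisor axiom and topological recursion on the two-point descendent invariants $\beta_{\ell,k}=I_k(\psi^{Nk-1-\ell}\sigma_N,\sigma_\ell)$, which treats the even and odd cases uniformly. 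A symptom of the confusion appears in your write-up: you attribute the $\ell=N$ coefficient $\frac{k-1}{k\,((k-1)!)^N}\binom{2k-2}{k-1}\hbar^{-(k-1)N}$ to ``the hypergeometric series $A_N$,'' but $A_N$ is $\langle S_N,\sigma_0\rangle$ and has $q^k$-coefficient $\frac{1}{(k!)^N}\binom{2k}{k}$.
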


The $\ell=0$ special case of Theorem \ref{thm:hypergeometric} gives the following.
\begin{cor}\label{thm:A}
	The hypergeometric series of the quadric $Q_N$ is 
	\begin{equation}\label{e:hypergeom}
		A_N(q)= 1 + \sum_{k \geq 1} \frac{1}{(k!)^N} \binom{2k}{k} q^k.
	\end{equation}
	The Gromov-Witten invariant $I_k(\sigma_N \psi^{N k-2})$ is given by
	\begin{equation}\label{e:GWformula}
		I_k( \sigma_N \psi^{N k-2}) = \frac{1}{(k!)^N} \binom{2k}{k}.
	\end{equation}
\end{cor}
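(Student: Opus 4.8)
The plan is to obtain Corollary~\ref{thm:A} as an immediate specialization of Theorem~\ref{thm:hypergeometric}, and then to read off the Gromov--Witten formula \eqref{e:GWformula} from the geometric interpretation \eqref{e:AseriesGW} already established above. Concretely: setting $\ell=0$ in Theorem~\ref{thm:hypergeometric} puts us in the first of its three cases (since $0\le\lfloor\tfrac{N-1}{2}\rfloor$), so that $\langle S_N,\sigma_0\rangle=\sum_{k\ge0}\hbar^{-kN}(k!)^{-N}\binom{2k}{k}\,q^k$. Since $\langle S_N,\sigma_0\rangle=A_N(\hbar^{-N}q)$ by the discussion in Section~\ref{s:aseries}, the substitution $\mathbf q=\hbar^{-N}q$ (under which $\hbar^{-kN}q^k\mapsto\mathbf q^k$) turns this identity into $A_N(\mathbf q)=\sum_{k\ge0}(k!)^{-N}\binom{2k}{k}\,\mathbf q^k$, which is exactly \eqref{e:hypergeom}. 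Formula \eqref{e:GWformula} then follows with no further work: by \eqref{e:AseriesGW} the coefficient $a_k$ of $q^k$ in $A_N$ is $I_k(\sigma_N\psi^{Nk-2})$, and we have just computed $a_k=\binom{2k}{k}/(k!)^N$.

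It is worth also recording the short self-contained argument that bypasses the full strength of Theorem~\ref{thm:hypergeometric} and uses only Givental's closed form \eqref{e:Givental}. Recall that $A_N(\hbar^{-N}q)=J^{\Q_N}_N$ is the component of the $J$-function along the fundamental class $\sigma_0\in H^0(\Q_N,\C)$. The key point is that for every $n\ge1$ one has $\sigma_1^n\in H^{2n}(\Q_N,\C)$, which has zero $\sigma_0$-component; hence the prefactor $e^{\ln(q)\sigma_1/\hbar}$ contributes only through its constant term, and the $\sigma_0$-component of the $d$-th summand of \eqref{e:Givental} is obtained simply by evaluating its rational coefficient at $\sigma_1=0$. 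That evaluation is a product of the scalars $j\hbar$, which collapses to $\binom{2d}{d}(d!)^{-N}$ times a power of $\hbar$; recombining this power of $\hbar$ with $q^d$ reassembles $\mathbf q^d$, and one recovers $A_N(\mathbf q)=\sum_{d\ge0}\binom{2d}{d}(d!)^{-N}\mathbf q^d$ as before.

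Neither route involves a real obstacle: the statement is a corollary in the literal sense, and essentially all the content sits in Theorem~\ref{thm:hypergeometric} (respectively in \eqref{e:Givental}) and in the reduction \eqref{e:AseriesGW}. The only thing that needs care is the bookkeeping of normalizations — the factor $(2\pi i\hbar)^N$ in the pairing and in the $J$-function, the cohomological weights carried by $\hbar$ and by the quantum parameter, and the passage between $q$ and $\mathbf q=\hbar^{-N}q$ that makes the $\hbar$-dependence of $A_N$ disappear; this is precisely what pins down the exponent on $(\sigma_1+j\hbar)$ in \eqref{e:Givental} and guarantees that the $\sigma_1=0$ evaluation yields $\binom{2d}{d}/(d!)^N$ rather than an $\hbar$-twisted variant. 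As a cross-check one can match the first few coefficients of \eqref{e:hypergeom} against the $\ell=0$ instance of the $B$-model residue computation $\tfrac{1}{(2\pi i)^N}\oint_{(S^1)^N}e^{\frac{1}{\hbar}\W}\,\omegaCan$, expanded in the quiver coordinates of $(\XXlus,\HH)$, which is the computation carried out in the remainder of Section~\ref{s:aseries}.
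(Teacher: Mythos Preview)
Your proposal is correct and matches the paper's own treatment: the corollary is obtained as the $\ell=0$ specialization of Theorem~\ref{thm:hypergeometric} together with \eqref{e:AseriesGW}, and you also record the alternative direct verification from Givental's closed formula \eqref{e:Givental}, which is exactly the second route the paper mentions. Your additional commentary on normalizations and the $B$-model cross-check is fine but not needed for the argument.
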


This corollary is easily verified in the $A$-model.  The formula~\eqref{e:GWformula} follows from equations~\eqref{e:hypergeom} and 
   \eqref{e:AseriesGW}, 
while the second formula, \eqref{e:hypergeom}, follows easily from the formula \eqref{e:Givental} for the $J$-function of $Q_N$. In the odd quadrics case the $D$-module is cyclic and hence the constant term determines all of the other terms of the flat section. However for even quadrics this is not the case.
We now give a direct A-model proof of Theorem~\ref{thm:hypergeometric} which works in the even and odd case alike. 

\begin{proof}[A-model proof]
	Our A-model proof works by recovering Theorem~\ref{thm:hypergeometric} from the recurrence relations of Kontsevich-Manin for Gromov-Witten invariants \cite{KM}. Define
	\begin{equation}\label{e:descGW}
		\beta_{\ell,k} = I_k( \psi^{Nk-1-\ell} \sigma_N,\sigma_\ell).
	\end{equation}

	Let us first assume that $N=2m-1$. Using the divisor axiom and topological recursion, we get:
	\begin{align*}
		k \beta_{\ell,k} = I_k(\psi^{Nk-1-\ell} \sigma_N,\sigma_\ell,\sigma_1) = \begin{cases}
																			\beta_{\ell+1,k} & \text{if 			$\ell\not\in\{m-1,N-1,N\}$,} \\
																			2\beta_{m,k} & \text{if 				$\ell=m-1$,} \\
																			\beta_{N,k}+\beta_{0,k-1} & 			\text{if $\ell=N-1$,} \\
																			\beta_{1,k-1} & \text{if 				$\ell=N$.}																		\end{cases}
\end{align*}
A straightforward computation then gives 
\begin{align*}
	\frac{\beta_{\ell,k+1}}{\beta_{\ell,k}}=	\begin{cases}
													\frac{2 (2k+1)}{k^{\ell}(k+1)^{N+1-\ell}}&\text{ if $0\leq \ell\leq N-1$,}\\
													\frac{ 2 (2k-1)}{(k-1) k^{N-1}(k+1)}&\text {if $\ell= N$,}\\
												\end{cases}
\end{align*}
and $\beta_{1,1}=2$, which yields Theorem~\ref{thm:hypergeometric}.
\qed

Similarly, in the case where $N=2m-2$:
\begin{align*}
k \beta_{\ell,k} = I_k(\psi^{Nk-1-\ell} \sigma_N,\sigma_\ell,\sigma_1) = \begin{cases}
																			\beta_{\ell+1,k} & \text{if $\ell\not\in\{m-2,N-1,N\}$,} \\
																			\beta_{m-1,k}+\beta_{m-1,k}' & \text{if $\ell=m-2$,} \\
																			\beta_{N,k}+\beta_{0,k-1} & \text{if $\ell=N-1$,} \\
																			\beta_{1,k-1} & \text{if $\ell=N$,}
																			\end{cases}
\end{align*}
and 
\[
	k\beta_{m-1,k}'=\beta_{m,k}.
\]
Theorem~\ref{thm:hypergeometric} is then easily checked.
\end{proof}

\begin{proof}[B-model proof]
We consider the distinguished flat section of the Dubrovin connection whose coefficients are expressed in terms of the $B$-model as residue integrals, see Section~\ref{s:applications} and compare with \cite[Theorem 4.2]{MR}. Explicitly, we let $\Gamma_0\cong (S^1)^N$ be a compact cycle  inside $\XXcan$ such that $\int_{\Gamma_0}\omegaCan=1$. Then the integral formula
\begin{equation}\label{e:SGamma0}
S_{\Gamma_0}(\hbar,q) := \frac{1}{(2\pi i \hbar)^N}\sum \left(\int_{\Gamma_0}e^{\frac 1\hbar \W} p_i\omegaCan\right) \sigma_{N-i}
\end{equation}
defines a flat section of the Dubrovin connection in the $N=2m-1$ case, and with $(\int_{\Gamma_0}e^{\frac 1\hbar \W} p_{m-1}\omegaCan) \sigma_{m-1}$ replaced by
$(\int_{\Gamma_0}e^{\frac 1\hbar \W} p_{m-1}'\omegaCan) \sigma_{m-1}+ (\int_{\Gamma_0}e^{\frac 1\hbar \W} p_{m-1}\omegaCan) \sigma_{m-1}'$ in the $N=2m-2$ case. 

We will prove the formula in Theorem~\ref{thm:hypergeometric} in one representative case, but omit the other cases, which are extremely similar.

Let us consider the case that $N=2m-2$, and $m \leq \ell \leq 2m-3$. In this case recall that 
$p_{\ell} = a_1 \dots a_{m-2} cd b_{m-2} \dots b_{2m-1-\ell}$, and
recall from \eqref{e:quiverMirrorEven} that the superpotential $\W$ equals 
\begin{small}
  \begin{equation*}
      a_1 + \dots + a_{m-2} + c +d+  b_{m-2} + \dots + b_1 + 
 \frac{q}{a_2 \dots a_{m-2} cd b_{m-2} \dots b_1} +
 \frac{q}{a_1 \dots a_{m-2} cd  b_{m-2} \dots b_2}
  \end{equation*}
\end{small}
in terms of the usual coordinates on $\XXlus$ viewed as a torus chart in $\XXcan$.

To compute the constant term of $p_{\ell} \exp(\frac{1}{\hbar} \W)$, we consider 
\[
	p_{\ell}\left(1+\frac{1}{\hbar} \W+ \frac{1}{\hbar^2} 
\frac{\W^2}{2!} + \frac{1}{\hbar^3} \frac{\W^3}{3!} + \dots \right),
\]
and we pick out from each  $p_{\ell} \frac{\W^i}{\hbar^i i!}$ every term which has the form $\lambda q^j$ where $\lambda \in \mathbb{Q}[\frac{1}{\hbar}]$. Here we just need to look at each 
$\frac{\W^{kN-\ell}}{\hbar^{kN-\ell} (kN-\ell)!}$ for $k=1,2,\dots,$ because the expansion of $p_{\ell} \frac{\W^i}{\hbar^i i!}$ for $i$ not of the form $kN-\ell$
will contain no terms of the form $\lambda q^j$ for $\lambda \in \mathbb{Q}[\frac{1}{\hbar}]$.

Now let us analyze $p_{\ell} \frac{\W^{kN-\ell}}{\hbar^{kN-\ell} (kN-\ell)!}$ for $N = 2m-2$.  A (Laurent) monomial in the expansion of $p_{\ell} \W^{k(2m-2)-\ell}$ is obtained by choosing one term in each of the $k(2m-2)-\ell$ factors. Some of the monomials in the expansion will be pure in the variable
$q$ alone -- in which case they will equal $q^k$. We need to show that the number of such monomials 
divided by $(k(2m-2)-\ell)!$ equals $\frac{1}{2} \binom{2k}{k} k^{\ell} /(k!)^{k(2m-2)}$. To count the number of such monomials, we need to pick one term in each of the $k(2m-2)-\ell$ factors so that we:
\begin{itemize}
	\item choose $i$ terms which are $\frac{q}{a_2 \dots a_{m-2} cd b_{m-2} \dots b_1}$ for some $0 \leq i \leq k$;
	\item choose $k-i$ terms which are $\frac{q}{a_1 \dots a_{m-2} cd b_{m-2} \dots b_2}$;
	\item choose $k-1$ terms which are $c$;
	\item choose $k-1$ terms which are $d$;
	\item choose $i$ terms which are $b_1$;
	\item choose $k-i-1$ terms which are $a_1$;
	\item for each $j$ such that $2 \leq j \leq m-2$, choose $k-1$ terms which are $a_j$;
	\item for each $j$ such that $2 \leq j \leq 2m-2-\ell$, choose $k$ terms which are $b_j$.
	\item for each $j$ such that $2m-2-\ell < j \leq m-2$, choose $k-1$ terms which are $b_j$.
\end{itemize}
The number of ways to do this is the sum of multinomial coefficients
\begin{equation}\label{eq:multinomial}
	\sum_{i=0}^k \binom{k(2m-2)-\ell}{i, i, k-i, k-i-1, k\dots k, k-1 \dots k-1},
\end{equation}
where the number of $k$'s in the string $k \dots k$ above is $2m-2-\ell-1$, and the number of $k-1$'s in the string $k-1 \dots k-1$ above is $\ell-1$. When we simplify \eqref{eq:multinomial} and divide by 
$(k(2m-2)-\ell)!$, we obtain $\frac{1}{2} \binom{2k}{k} k^{\ell} /(k!)^{k(2m-2)}$, as desired.
\end{proof}

\bibliographystyle{amsalpha}
\bibliography{biblio}

\def\cprime{$'$}
\providecommand{\bysame}{\leavevmode\hbox to3em{\hrulefill}\thinspace}
\providecommand{\MR}{\relax\ifhmode\unskip\space\fi MR }
\providecommand{\MRhref}[2]{%
  \href{http://www.ams.org/mathscinet-getitem?mr=#1}{#2}
}
\providecommand{\href}[2]{#2}
\begin{thebibliography}{BCFKvS00}

\bibitem[BCFKvS98]{BCFKvSGr}
Victor~V. Batyrev, Ionu{\c{t}} Ciocan-Fontanine, Bumsig Kim, and Duco van
  Straten, \emph{Conifold transitions and mirror symmetry for {C}alabi-{Y}au
  complete intersections in {G}rassmannians}, Nuclear Phys. B \textbf{514}
  (1998), no.~3, 640--666. \MR{1619529 (99m:14074)}

\bibitem[BCFKvS00]{BCFKvSPF}
\bysame, \emph{Mirror symmetry and toric degenerations of partial flag
  manifolds}, Acta Math. \textbf{184} (2000), no.~1, 1--39. \MR{1756568
  (2001f:14077)}

\bibitem[BFZ96]{BFZ}
Arkady Berenstein, Sergey Fomin, and Andrei Zelevinsky, \emph{Parametrizations
  of canonical bases and totally positive matrices}, Adv. Math. \textbf{122}
  (1996), no.~1, 49--149. \MR{1405449 (98j:17008)}

\bibitem[BFZ05]{FZ3}
\bysame, \emph{Cluster algebras. {III}. {U}pper bounds and double {B}ruhat
  cells}, Duke Math. J. \textbf{126} (2005), no.~1, 1--52. \MR{2110627
  (2005i:16065)}

\bibitem[BZ97]{BZ}
Arkady Berenstein and Andrei Zelevinsky, \emph{Total positivity in {S}chubert
  varieties}, Comment. Math. Helv. \textbf{72} (1997), no.~1, 128--166.
  \MR{1456321 (99g:14064)}

\bibitem[CK99]{CoxKatz}
David~A. Cox and Sheldon Katz, \emph{Mirror symmetry and algebraic geometry},
  Mathematical Surveys and Monographs, vol.~68, American Mathematical Society,
  Providence, RI, 1999. \MR{1677117 (2000d:14048)}

\bibitem[Dub96]{Dub:2DTFT}
Boris Dubrovin, \emph{Geometry of 2{D} topological field theories}, Integrable
  Systems and Quantum Groups \textbf{1620} (1996), 120 -- 348.

\bibitem[EHX97]{EHX}
Tohru Eguchi, Kentaro Hori, and Chuan-Sheng Xiong, \emph{Gravitational quantum
  cohomology}, Internat. J. Modern Phys. A \textbf{12} (1997), no.~9,
  1743--1782.

\bibitem[FW04]{FW}
W.~Fulton and C.~Woodward, \emph{On the quantum product of {S}chubert classes},
  J. Algebraic Geom. \textbf{13} (2004), no.~4, 641--661.

\bibitem[FZ02]{FZ1}
Sergey Fomin and Andrei Zelevinsky, \emph{Cluster algebras. {I}.
  {F}oundations}, J. Amer. Math. Soc. \textbf{15} (2002), no.~2, 497--529
  (electronic). \MR{1887642 (2003f:16050)}

\bibitem[FZ03]{FZ2}
\bysame, \emph{Cluster algebras. {II}. {F}inite type classification}, Invent.
  Math. \textbf{154} (2003), no.~1, 63--121. \MR{2004457 (2004m:17011)}

\bibitem[FZ07]{FZ4}
\bysame, \emph{Cluster algebras. {IV}. {C}oefficients}, Compos. Math.
  \textbf{143} (2007), no.~1, 112--164. \MR{2295199 (2008d:16049)}

\bibitem[GHK11]{GHK11}
Mark Gross, Paul Hacking, and Sean Keel, \emph{Mirror symmetry for log
  {C}alabi-{Y}au surfaces {I}}, 2011.

\bibitem[Gin95]{Gin:GS}
V.~Ginzburg, \emph{{Perverse sheaves on a Loop group and Langlands' duality}},
  arXiv:9511007, 1995.

\bibitem[Giv95]{GiventalICM}
Alexander~B. Givental, \emph{Homological geometry and mirror symmetry},
  Proceedings of the {I}nternational {C}ongress of {M}athematicians, {V}ol.\ 1,
  2 ({Z}\"urich, 1994), Birkh\"auser, Basel, 1995, pp.~472--480. \MR{1403947
  (97j:58013)}

\bibitem[Giv96]{Givental:EqGW}
\bysame, \emph{Equivariant {G}romov-{W}itten invariants}, IMRN \textbf{13}
  (1996), 613--663.

\bibitem[Giv97]{Givental:QToda}
\bysame, \emph{Stationary phase integrals, quantum {T}oda lattices, flag
  manifolds and the mirror conjecture}, Topics in singularity theory, Amer.
  Math. Soc. Transl. Ser. 2, vol. 180, Amer. Math. Soc., Providence, RI, 1997,
  pp.~103--115. \MR{1767115 (2001d:14063)}

\bibitem[Giv98]{Givental:Toric}
\bysame, \emph{A mirror theorem for toric complete intersections}, Topological
  field theory, primitive forms and related topics ({K}yoto, 1996), Progr.
  Math., vol. 160, Birkh\"auser Boston, Boston, MA, 1998, pp.~141--175.
  \MR{1653024 (2000a:14063)}

\bibitem[GLS08a]{GLS-Partial}
Christof Gei{\ss}, Bernard Leclerc, and Jan Schr{\"o}er, \emph{Partial flag
  varieties and preprojective algebras}, Ann. Inst. Fourier (Grenoble)
  \textbf{58} (2008), no.~3, 825--876. \MR{2427512 (2009f:14104)}

\bibitem[GLS08b]{GLS-Survey}
\bysame, \emph{Preprojective algebras and cluster algebras}, Trends in
  representation theory of algebras and related topics, EMS Ser. Congr. Rep.,
  Eur. Math. Soc., Z\"urich, 2008, pp.~253--283. \MR{2484728 (2009m:16024)}

\bibitem[GLS11]{GLS}
\bysame, \emph{Kac-{M}oody groups and cluster algebras}, Adv. Math.
  \textbf{228} (2011), no.~1, 329--433.

\bibitem[GS13]{GS}
Vassily Gorbounov and Maxim Smirnov, \emph{{Some remarks on Landau-Ginzburg
  potentials for odd-dimensional quadrics}}, 2013.

\bibitem[Iri09]{iritani}
Hiroshi Iritani, \emph{An integral structure in quantum cohomology and mirror
  symmetry for toric orbifolds}, Adv. Math. \textbf{222} (2009), no.~3,
  1016--1079.

\bibitem[KLS14]{KLS}
Allen Knutson, Thomas Lam, and David~E Speyer, \emph{Projections of
  {R}ichardson varieties}, Journal f{\"u}r die reine und angewandte Mathematik
  (Crelles Journal) \textbf{2014} (2014), no.~687, 133--157.

\bibitem[KM98]{KM}
M.~Kontsevich and Yu. Manin, \emph{Relations between the correlators of the
  topological sigma-model coupled to gravity}, Comm. Math. Phys. \textbf{196}
  (1998), no.~2, 385--398. \MR{1645019 (99k:14040)}

\bibitem[KS14]{KumarSchwede}
Shrawan Kumar and Karl Schwede, \emph{Richardson varieties have {K}awamata log
  terminal singularities}, Int. Math. Res. Not. IMRN (2014), no.~3, 842--864.
  \MR{3163569}

\bibitem[Lus83]{lusztig}
George Lusztig, \emph{Singularities, character formulas, and a {$q$}-analog of
  weight multiplicities}, Analysis and topology on singular spaces, {II}, {III}
  ({L}uminy, 1981), Ast\'erisque, vol. 101, Soc. Math. France, Paris, 1983,
  pp.~208--229.

\bibitem[Lus94]{Lusztig94}
G.~Lusztig, \emph{Total positivity in reductive groups}, Lie theory and
  geometry, Progr. Math., vol. 123, Birkh\"auser Boston, Boston, MA, 1994,
  pp.~531--568. \MR{1327548 (96m:20071)}

\bibitem[MR13]{MR}
R.~{M}arsh and K.~{R}ietsch, \emph{{The $B$-model connection and
  $T$-equivariant mirror symmetry for Grassmannians}}, arXiv:1307.1085, 2013.

\bibitem[MV07]{MV}
I.~Mirkovi{\'c} and K.~Vilonen, \emph{Geometric {L}anglands duality and
  representations of algebraic groups over commutative rings}, Ann. of Math.
  (2) \textbf{166} (2007), no.~1, 95--143.

\bibitem[Pet97]{peterson}
D.~Peterson, \emph{{Quantum cohomology of $G/P$}}, {Lecture Course, MIT, Spring
  Term}, 1997.

\bibitem[Pha11]{Pham:book}
Fr{\'e}d{\'e}ric Pham, \emph{Singularities of integrals}, Universitext,
  Springer, London; EDP Sciences, Les Ulis, 2011, Homology, hyperfunctions and
  microlocal analysis, With a foreword by Jacques Bros, Translated from the
  2005 French original, With supplementary references by Claude Sabbah.
  \MR{2798679 (2012b:58031)}

\bibitem[PR13]{PR2}
C.~Pech and K.~Rietsch, \emph{{A comparison of Landau-Ginzburg models for
  odd-dimensional Quadrics}}, arXiv:1306.4016, 2013.

\bibitem[Prz13]{przyjalkowski}
V.~V. Przyjalkowski, \emph{Weak {L}andau-{G}inzburg models of smooth {F}ano
  threefolds}, Izv. Ross. Akad. Nauk Ser. Mat. \textbf{77} (2013), no.~4,
  135--160. \MR{3135701}

\bibitem[Rie06]{rietschNagoya}
Konstanze Rietsch, \emph{A mirror construction for the totally nonnegative part
  of the {P}eterson variety}, Nagoya Math. J. \textbf{183} (2006), 105--142.
  \MR{2253887 (2007i:14055)}

\bibitem[Rie08]{rietsch}
\bysame, \emph{A mirror symmetric construction of {$qH^\ast_T(G/P)_{(q)}$}},
  Adv. Math. \textbf{217} (2008), no.~6, 2401--2442.

\bibitem[Sei08]{Seidel:book}
Paul Seidel, \emph{Fukaya categories and {P}icard-{L}efschetz theory}, Zurich
  Lectures in Advanced Mathematics, European Mathematical Society (EMS),
  Z\"urich, 2008. \MR{2441780 (2009f:53143)}

\bibitem[Wit97]{WittenPhases}
Edward Witten, \emph{Phases of {$N=2$} theories in two dimensions}, Mirror
  symmetry, {II}, AMS/IP Stud. Adv. Math., vol.~1, Amer. Math. Soc.,
  Providence, RI, 1997, pp.~143--211. \MR{1416338}

\end{thebibliography}

\end{document}